\newtheorem{theorem}{Theorem}%[section]
\newtheorem{lemma}[theorem]{Lemma}
\newtheorem{corollary}[theorem]{Corollary}
\newtheorem{proposition}[theorem]{Proposition}
\newtheorem{oldtheorem}{Theorem}
\newtheorem{oldlemma}[oldtheorem]{Lemma}
\theoremstyle{definition}
\theoremstyle{remark}
\numberwithin{equation}{section}
\newcommand{\Bl}{\Bigl(}
\newcommand{\Br}{\Bigr)}
\newcommand{\diam}{\mathop{\rm diam}}
\newcommand{\supp}{\mathop{\rm supp}}
\newcommand{\ql}{Q^{(l)}}
\def\ve{\varepsilon}
\newcommand{\abs}[1]{\lvert#1\rvert}
\newcommand{\D}{\mathbb{D}}
\newcommand{\N}{\mathbb{N}}
\newcommand{\R}{\mathbb{R}}
\newcommand{\C}{\mathbb{C}}
\newcommand{\e}{\varepsilon}
\newcommand{\KS}{\mathcal{A}^{-\infty}}
\renewcommand{\phi}{\varphi}
\DeclareMathOperator{\dist}{dist}
\newcommand{\A}{\mathcal{A}}
\renewcommand{\a}{\alpha}
\newcommand{\DU}{\overline{D}}
\newcommand{\ph}{D_{\rm\tiny ph}}
\newcommand\opteq[1]{\mathrel{\mathpalette\opt@eq{#1}}}
\newcommand{\opt@eq}[2]{%
  \begingroup
  \sbox\z@{$#1#2$}%
  \sbox\tw@{\resizebox{!}{.5\ht\z@}{$\m@th#1($}}%
  \nonscript\hskip-\wd\tw@
  \mkern1mu
  \raisebox{-.35\ht\z@}[0pt][0pt]{\resizebox{!}{.5\ht\z@}{$\m@th#1($}}%
  \mkern-1mu
  {#2}%
  \mkern-1mu
  \raisebox{-.35\ht\z@}[0pt][0pt]{\resizebox{!}{.5\ht\z@}{$\m@th#1)$}}%
  \mkern1mu
  \nonscript\hskip-\wd\tw@
  \endgroup
}
\newcommand{\leoq}{\opteq{\leq}}
\begin{document}

\title[Irregular finite order solutions of LDE's]{Irregular finite order solutions of\\ complex LDE's in unit disc}
\thanks{The fifth author was supported in part 
by Ministerio de Ciencia Innovaci\'on y universidades, Spain, projects PGC2018-096166-B-100 and MTM2017-90584-REDT; La Junta de Andaluc\'ia, project FQM210.}

\author{I.~Chyzhykov}
\address{School of Mathematics Science\newline
   \indent        Guizhou  Normal University \newline
      \indent         Guiyang, Guizhou 550001, China}
  
\address{Faculty of Mathematics and Computer Science\newline
\indent Warmia and Mazury University of Olsztyn\newline
\indent S\l{}oneczna 54, Olsztyn, 10710, Poland}

\email{chyzhykov@yahoo.com}

\author{P.~Filevych}
\address{National University Lviv Polytechnic, \newline
\indent 5 Mytropolyt Andrei str., Buiding 4 \newline \indent Lviv 79000, Ukraine}
\email{p.v.filevych@gmail.com}

\author{J.~Gr\"ohn}
\address{Department of Physics and Mathematics\newline
\indent University of Eastern Finland\newline
\indent P.O. Box 111, FI-80101 Joensuu, Finland}
\email{\vspace*{-0.25cm}janne.grohn@uef.fi}

\author{J.~Heittokangas}
\email{\vspace*{-0.25cm} janne.heittokangas@uef.fi}

\author{J.~R\"atty\"a}
\email{jouni.rattya@uef.fi}

\date{\today}

\subjclass[2020]{Primary 34M10; Secondary 30D35}
% 34M10: Ordinary differential equations, Oscillation, growth of solutions
% 30D35: Functions of a complex variable, Distribution of values, Nevanlinna theory

\keywords{Approximation, linear differential equation, logarithmic derivative estimate, lower order of growth, subharmonic function,
Wiman-Valiron theory}

\begin{abstract}
It is shown that the order and the lower order of growth are equal for all non-trivial solutions of $f^{(k)}+A f=0$ if and only if the coefficient $A$ is analytic in the unit disc and $\log^+ M(r,A)/\log(1-r)$ tends to a~finite limit as $r\to 1^-$.
A~family of concrete examples is constructed, where the order of solutions remain the same while the lower order may vary on a~certain interval depending on the irregular growth of the coefficient.
These coefficients emerge as the logarithm of their modulus approximates smooth radial subharmonic functions of prescribed irregular growth on a~sufficiently large subset of the unit disc.  
A~result describing the phenomenon behind these highly non-trivial examples is also established. En route to
results of general nature, a~new sharp logarithmic derivative estimate involving the lower order of growth is discovered.
In addition to these estimates,
arguments used are based, in particular, on the Wiman-Valiron theory adapted for the lower order, and on a~good understanding of the right-derivative of the logarithm of the maximum modulus.
\end{abstract}

\maketitle

\section{Introduction and main results}

The balance between the growth of coefficients and the growth and oscillation of solutions
has been a~central theme of research concerning linear differential equations in a~complex 
domain for over a~half of~century.
In the case of the complex plane~$\C$,
the classical result of Wittich~\cite[Satz~1]{W1966}
states that
the analytic coefficients $A_0, \dotsc, A_{k-1}$ are polynomials 
if and only if all solutions of
	\begin{equation}\label{eq:general}
	f^{(k)} + A_{k-1}f^{(k-1)} + \dotsb + A_1f' + A_0f = 0
	\end{equation}
are entire functions of finite order of growth. Further,
each order belongs to a~finite set of rational numbers
induced by the degrees of the polynomial 
coefficients~\cite{GSW1998}.
In the particular case when the coefficient 
$A\not\equiv 0$ of
	\begin{equation} \label{eq:dek}
	f^{(k)}+Af=0
	\end{equation}
is a polynomial, all non-trivial solutions $f$ of \eqref{eq:dek} have regular growth in the sense that 
the lower order and the order of $f$ are both equal to $\deg(A)/k+1$,~i.e.,
	$$
	\liminf_{r\to\infty}\frac{\log\log M(r,f)}{\log r}= 
	\limsup_{r\to\infty}\frac{\log\log M(r,f)}{\log r}=\frac{\deg(A)}{k}+1;
	$$ 
see~\cite[p.~74]{L:1993} and~\cite[pp.~106--108]{V:1949}. Here $M(r,f)=\max_{|z|=r}|f(z)|$ is the maximum 
modulus of $f$ on the circle $\abs{z}=r$.
Note that, as a polynomial, the coefficient $A$ has regular growth as well in the sense that
	\begin{equation}\label{eq:poly}
	\liminf_{r\to\infty}\frac{\log M(r,A)}{\log r}=\limsup_{r\to\infty}\frac{\log M(r,A)}{\log r}
	=\deg(A)<\infty.
	\end{equation}
The oscillation of solutions is equally intimately connected to the growth of coefficients, see
\cite{CGHR2013, CGHRequiv}.

The connection between the growth of coefficients and the growth of solutions 
is also well understood in the case of the unit disc $\D = \{z\in\C : |z|<1\}$.
In this setting, the analogue of polynomial coefficients in $\C$ are
coefficients belonging to the Korenblum space~$\KS$~\cite{CGHRequiv, K1975}.
Indeed, the analytic coefficients belong to~$\KS$ if and only if all solutions of \eqref{eq:general} are 
of finite order of growth. In the case of \eqref{eq:dek}, all non-trivial solutions are 
of maximal growth and have order of growth uniquely determined by the coefficient 
$A\in\KS$~\cite{CHR2009, CHR2010, CGHRequiv}. 
However, almost nothing is known about the lower order of growth of solutions.
Our principal objective is to show that the degree and the lower degree of growth of the coefficient $A\in\KS$ control 
the lower order of growth
of non-trivial solutions. In particular, we demonstrate that solutions having different order and lower order are
possible in the case of the unit disc, contrasting sharply with  the analogous situation in the complex plane.

To reach concrete statements, some notation is needed.
For a function $f$ analytic in $\D$, the order and the lower order of growth (with respect
to the maximum modulus) are defined by
	\begin{equation*}
	\sigma_M (f) = \limsup_{r\to 1^-} \, \frac{\log^+ \log^+ M(r,f)}{\log{\frac{1}{1-r}}},
	\quad 
	\lambda_M(f) = \liminf_{r\to 1^-} \, \frac{\log^+ \log^+ M(r,f)}{\log{\frac{1}{1-r}}},
	\end{equation*}
respectively. Here the plus sign refers to the non-negative part.
Clearly, $0\leq \lambda_M(f)\leq \sigma_M(f)\leq \infty$ for any analytic $f$ in $\D$.
For a~function $A$ analytic in $\D$, the degree and the lower degree of growth are defined by
	\begin{equation*}
	\sigma_{M,\deg} (A) = \limsup_{r\to 1^-} \, \frac{\log^+ M(r,A)}{\log{\frac{1}{1-r}}},
	\quad 
	\lambda_{M,\deg}(A) = \liminf_{r\to 1^-} \, \frac{\log^+ M(r,A)}{\log{\frac{1}{1-r}}},
	\end{equation*}
respectively. This terminology reflects the polynomial growth in \eqref{eq:poly}. Differing from the
situation in \eqref{eq:poly}, we will construct analytic functions $A$ in $\D$ for which 
$\lambda_{M,\deg}(A)<\sigma_{M,\deg} (A)$, and consider the growth of solutions of \eqref{eq:dek}. 

The 
Korenblum space $\KS$ consists of functions of finite degree, and this implies the following  
statement: All solutions of \eqref{eq:general} are of finite order of growth if and only if
the coefficients are of finite degree. Regarding the non-trivial solutions $f$ of \eqref{eq:dek}, 
\cite[Theorem~1.4]{CHR2010} implies
\begin{equation*}
    \sigma_M(f)
    \leq \max \big\{ 0, \sigma_{M,\deg}(A)/k-1 \big\}
\end{equation*}
and, in particular,
	\begin{equation} \label{eq:growth}
	\sigma_M(f) = 
	\sigma_{M,\deg}(A)/k-1, \quad  \sigma_{M,\deg}(A)\geq 2k.
	\end{equation}

Our first result relates the irregular growth of the coefficient to the irregular growth
of non-trivial solutions of~\eqref{eq:dek}. This shows that the unit disc case is, in a~certain sense, similar to the plane case.

%%%%%%%%%%%%%%%%%%%%%%%%
%%%% ---- THEOREM ---- %%%%
%%%%%%%%%%%%%%%%%%%%%%%%

\begin{theorem} \label{thm:new}
Let $k\in\N$ and let $A$ be an~analytic function in $\D$. Then, $\sigma_{M,\deg}(A)=\lambda_{M,\deg}(A) = p >2k$
if and only if $\sigma_M(f) = \lambda_M(f) = p/k-1 >1$ for some (equivalently for all) non-trivial solution(s) $f$ of~\eqref{eq:dek}.
\end{theorem}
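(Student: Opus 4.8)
The theorem is an equivalence, so I would prove two implications. For the forward direction, assume $\sigma_{M,\deg}(A)=\lambda_{M,\deg}(A)=p>2k$. The upper bounds are already available: by \eqref{eq:growth} every non-trivial solution $f$ satisfies $\sigma_M(f)=p/k-1$. So the entire content is the matching lower bound $\lambda_M(f)\geq p/k-1$. The natural tool here is a logarithmic derivative estimate adapted to the lower order — precisely the "new sharp logarithmic derivative estimate involving the lower order" advertised in the abstract — which should give, roughly, that on a sequence $r_n\to 1^-$ realizing $\lambda_M(f)$ one has control of the form $|f^{(k)}(z)/f(z)|\lesssim (1-|z|)^{-k(\lambda_M(f)+1+\e)}$ on a large portion of the circle $|z|=r_n$. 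Plugging this into \eqref{eq:dek} rewritten as $-A=f^{(k)}/f$ yields $M(r_n,A)\lesssim (1-r_n)^{-k(\lambda_M(f)+1+\e)}$ along that sequence, hence $\lambda_{M,\deg}(A)\leq k(\lambda_M(f)+1)$; since $\lambda_{M,\deg}(A)=p$ this gives $\lambda_M(f)\geq p/k-1$, as required. The delicate point is that the logarithmic derivative estimate only holds outside an exceptional set of circles or outside a small exceptional arc on each circle, so one must argue that along a suitable subsequence the circle $|z|=r_n$ (realizing the liminf for $M(r,A)$) can be chosen to avoid the exceptional set, and that the exceptional arcs do not hide the maximum of $|A|$ — here the maximum modulus of $A$ being comparable, via subharmonicity or a Borel–Carathéodory type argument, to its values on a dominant subset of the circle is what saves the day.

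For the converse direction, assume $\sigma_M(f)=\lambda_M(f)=p/k-1>1$ for some non-trivial solution $f$. From $\sigma_M(f)=p/k-1$ and \eqref{eq:growth} (or rather the equivalence in \cite[Theorem~1.4]{CHR2010}) one recovers $\sigma_{M,\deg}(A)=p$. It remains to show $\lambda_{M,\deg}(A)\geq p$, i.e. that $A$ cannot grow slowly along any sequence. This is where the Wiman–Valiron theory adapted to the lower order enters: on a sequence of radii realizing $\lambda_M(f)$ — which by hypothesis equals $\sigma_M(f)$, so in fact \emph{every} radius tending to $1^-$ works in the relevant asymptotic sense — the central index $\nu(r,f)$ of the power series of $f$ behaves like $(1-r)^{-(\lambda_M(f)+1+o(1))}$, and the Wiman–Valiron asymptotic $f^{(k)}(z)/f(z)\sim (\nu(r,f)/z)^k$ holds for $z$ near the maximum point, uniformly outside a thin exceptional set of $r$. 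Combining this with $-A=f^{(k)}/f$ gives $M(r,A)\gtrsim \nu(r,f)^k \gtrsim (1-r)^{-k(\lambda_M(f)+1)+o(1)}$ along a sequence of $r$ of full "lower density", which forces $\lambda_{M,\deg}(A)\geq k(\lambda_M(f)+1)=p$. Together with $\sigma_{M,\deg}(A)=p$ this yields $\sigma_{M,\deg}(A)=\lambda_{M,\deg}(A)=p$, and the strict inequality $p>2k$ follows from $p/k-1>1$.

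The main obstacle, in both directions, is controlling the interaction between the exceptional sets in the analytic estimates (logarithmic derivative estimate, Wiman–Valiron) and the sequences of radii that realize the liminf defining the lower order or lower degree. In the plane this is classical and painless; in the unit disc the exceptional sets are measured with respect to $dr/(1-r)$ and one must verify that a set of finite such measure still leaves room to pick $r_n\to 1^-$ realizing $\lambda_{M,\deg}(A)$ (respectively $\lambda_M(f)$), and simultaneously lying in the good set for the estimate. I would handle this by a covering/density argument: the exceptional set has logarithmic measure zero (or finite), so its complement is "dense at $1^-$" in the logarithmic scale, while the definition of liminf only requires \emph{some} sequence of radii — and one shows these two families of radii must intersect infinitely often by a pigeonhole on dyadic-type annuli $\{1-2^{-n}\leq |z|\leq 1-2^{-n-1}\}$. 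A secondary technical point is passing between $M(r,A)$ on full circles and on the large subarcs where the logarithmic derivative estimate is valid; this should follow from the fact that for an analytic function, $\log M(r,A)$ is comparable to the average of $\log^+|A|$ over a slightly larger circle, so deleting a short arc cannot decrease the relevant maximum by more than a bounded factor in the logarithmic scale. Finally, one should double-check that the case distinction — whether $\sigma_M(f)$ is attained only along a sparse sequence versus along all radii — is genuinely handled by the hypothesis $\sigma_M(f)=\lambda_M(f)$, which collapses the two and makes the asymptotics hold along every radius up to $o(1)$; this is the structural reason the equivalence in the theorem is clean.
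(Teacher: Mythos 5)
Your plan is essentially correct, and the forward direction matches the paper's, which packages the logarithmic derivative estimate (Theorem~\ref{TheoremLogDeriv}) into Theorem~\ref{th:first}(a) and applies it directly. Two small corrections there: the exponent actually proved is $2+\bigl(\lambda_M(f)-\lambda_M(f)/\sigma_M(f)\bigr)^++\varepsilon$, not the stronger $\lambda_M(f)+1+\varepsilon$ you postulate (they coincide only once $\lambda_M(f)=\sigma_M(f)$ is established, but the weaker estimate still closes the arithmetic); and the interaction between good radii and the liminf is a non-issue, since $\lambda_{M,\deg}(A)$ is a liminf and any sequence of good radii tending to $1^-$ already gives the needed upper bound, while Theorem~\ref{TheoremLogDeriv} holds on the \emph{full} circle $|z|=r$ for $r$ in a set of upper density one, so there are no exceptional arcs. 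Your converse direction takes a genuinely different route from the paper's. You apply Wiman--Valiron directly: $\lambda_M(f)=\sigma_M(f)$ forces $\lambda_*(f)=\lambda_M(f)+1=p/k>1$ by Proposition~\ref{p:k_low_ord}, so Strelitz's asymptotic (Theorem~\ref{t:Strelitz}) applies with no further hypotheses and yields $M(r,A)\gtrsim K(r,f)^k\gtrsim (1-r)^{-p+o(1)}$ off a set of finite logarithmic measure, forcing $\lambda_{M,\deg}(A)\geq p$. That is sound and economically reuses machinery already built for Theorem~\ref{th:first}(b). The paper instead introduces Proposition~\ref{prop:extra-de}, a contrapositive upper bound on $\lambda_M(f)$ proved only from the growth estimate~\cite[Theorem~5.1]{HKR:2004} with no Wiman--Valiron, precisely because Theorem~\ref{th:first}(b) carries the extra hypothesis $p_1>k\bigl(2+\tfrac{p_2-2k}{p_2}\bigr)$ that is not available a priori; your direct Wiman--Valiron route sidesteps this by never invoking Theorem~\ref{th:first}(b). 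Your approach buys economy; the paper's buys an elementary, self-contained proposition with weaker hypotheses, of some independent interest.
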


The discussion on the principal ingredients of the proof of Theorem~\ref{thm:new} is postponed to the end of the present section.

Theorem~\ref{e:sol_limits2} below unfolds a~family of examples, and shows that 
the difference between $\sigma_{M,\deg}(A)$ and $\lambda_{M,\deg} (A)$ does
not uniquely determine the lower order of solutions of~\eqref{eq:dek}.
The reason why this result differs radically
from the corresponding situation  in the plane
is that the Korenblum space is a much richer family of functions in $\D$ than the set of polynomials is in $\C$.

%%%%%%%%%%%%%%%%%%%%%%%%
%%%% ---- THEOREM ---- %%%%
%%%%%%%%%%%%%%%%%%%%%%%%
\begin{theorem}\label{e:sol_limits2}
	Let $k\in\N$, $k\le p_1<p_2<\infty$, $2k<p_2$ and $\alpha\in[p_1/p_2,1]$. 
Then there exists an analytic function $A=A(\alpha)$ in $\D$ such that
$\sigma_{M,\deg} (A)=p_2$, $\lambda_{M,\deg} (A)=p_1$ and
any non-trivial solution $f$ of \eqref{eq:dek} satisfies $\sigma_M(f)=p_2/k-1$ and $\lambda_M(f)=p_1/k-\alpha$.
\end{theorem}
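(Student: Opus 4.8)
The plan is to build the coefficient $A$ so that $\log|A|$ closely mimics a carefully chosen radial subharmonic function $u$ whose growth oscillates between the rates $p_1$ and $p_2$, and then to run the solutions of \eqref{eq:dek} through the Wiman--Valiron machinery (adapted to the lower order) together with the sharp logarithmic derivative estimate advertised in the abstract. First I would construct a smooth radial subharmonic model $u(z)=v(\log\frac{1}{1-|z|})$, where $v$ is a convex-type profile on $[0,\infty)$ chosen so that $\limsup v(t)/t=p_2$ and $\liminf v(t)/t=p_1$, with the oscillation arranged on a sequence of intervals whose geometry is tuned by the parameter $\alpha$. The point of introducing $\alpha\in[p_1/p_2,1]$ is that the lower order of a solution is governed not merely by the liminf of $\log M(r,A)$ but by how that liminf is \emph{attained}: a short dip in the coefficient's growth is "felt" by the solution with a delay, and $\alpha$ interpolates between the extreme cases where the dip is long enough to be fully felt ($\alpha=p_1/p_2$, giving $\lambda_M(f)=p_2/k-p_1/p_2\cdot\dots$, matching the worst case) and where it is so brief that the solution barely notices it ($\alpha=1$, giving $\lambda_M(f)=p_1/k-1$, the "naive" value).

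The second step is the approximation: I would invoke (or prove, as a separate lemma in the paper — here I assume a result "describing the phenomenon behind these examples" of the type mentioned in the abstract) that there is an analytic $A$ in $\D$ with $\log|A(z)|=u(z)+O(1)$ off a small exceptional set $E\subset\D$, where $E$ is thin enough (e.g. a union of discs whose radii sum to something controllable near each circle) that it does not interfere with the maximum modulus: $\log M(r,A)=v(\log\frac{1}{1-r})+O(1)$ as $r\to1^-$. This immediately gives $\sigma_{M,\deg}(A)=p_2$ and $\lambda_{M,\deg}(A)=p_1$. The construction of such an $A$ is where the "highly non-trivial" work lies — it amounts to a quantitative Arakelyan/Keldysh-type approximation of a radial subharmonic function by the logarithm of the modulus of an analytic function on a large subset of $\D$, with explicit control of the exceptional set.

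The third step is the analysis of solutions. For the order, \eqref{eq:growth} already gives $\sigma_M(f)=p_2/k-1$ since $p_2>2k$, so only the lower order requires work. For the upper bound $\lambda_M(f)\le p_1/k-\alpha$ I would use the Wiman--Valiron theory adapted to the lower order: on a sequence $r_n\to1^-$ realizing the liminf, the central index $\nu(r,f)$ and $M(r,f)$ are linked so that, feeding $f^{(k)}/f\approx(\nu(r,f)/(1-r)^{?})^k$ into $f^{(k)}=-Af$, one extracts a growth relation forcing $\log\log M(r_n,f)\le(p_1/k-\alpha+o(1))\log\frac{1}{1-r_n}$; the geometry of the dips of $v$ (their length relative to their depth, calibrated by $\alpha$) is exactly what produces the subtracted $\alpha$ rather than $1$. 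For the matching lower bound $\lambda_M(f)\ge p_1/k-\alpha$ I would use the new sharp logarithmic derivative estimate involving the lower order, applied to a solution $f$, to show that $\log M(r,A)$ cannot stay small on a long interval unless $\log M(r,f)$ is correspondingly large just before it, i.e. the solution cannot grow more slowly than the coefficient permits; combined with the explicit profile $v$ this pins $\lambda_M(f)$ from below by $p_1/k-\alpha$. I expect the main obstacle to be the precise bookkeeping in this last step — matching the "delay" with which a solution responds to an oscillation in $\log M(r,A)$ to the exact value $\alpha$, which requires the logarithmic derivative estimate to be sharp in its dependence on the lower order and requires the dip geometry of $v$ to be chosen with the correct scaling; getting the two bounds to meet at $p_1/k-\alpha$ for the whole range $\alpha\in[p_1/p_2,1]$, rather than only at the endpoints, is the delicate point.
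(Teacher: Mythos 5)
Your Steps 1 and 2 match the paper's strategy well: the paper constructs a piecewise-defined radial subharmonic function $\varphi$ with prescribed oscillation between rates $p_1$ and $p_2$ (Lemma~\ref{lemma:app}), then invokes the approximation theorem of Chyzhykov--Lyubarskii (stated as Theorem~\ref{t:3}) to produce an analytic $A$ with $\log|A|-\varphi = O(\log\log\frac{1}{1-|z|})$ off a controlled exceptional set, yielding $\sigma_{M,\deg}(A)=p_2$ and $\lambda_{M,\deg}(A)=p_1$. Your intuitive description of $\alpha$ as calibrating the "dip geometry" is also essentially what the paper does via the auxiliary parameter $p\in[p_2,\infty)$ in Lemma~\ref{lemma:app}(iv).

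However, in Step 3 you have conflated the toolkit of Theorem~\ref{th:first} with that of Theorem~\ref{e:sol_limits2}, and this produces a genuine gap. For the upper bound $\lambda_M(f)\le p_1/k-\alpha$ you propose Wiman--Valiron machinery; the paper instead uses the elementary Gronwall-type growth estimate $\log^+|f(re^{i\theta})|\lesssim\int_0^r|A(te^{i\theta})|^{1/k}\,dt$ from \cite{HKR:2004}, which is cleaner here because it avoids the exceptional set inherent to Wiman--Valiron. This is a defensible alternative. The real problem is the lower bound: you propose to use "the new sharp logarithmic derivative estimate involving the lower order", which is Theorem~\ref{TheoremLogDeriv}. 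That estimate yields precisely what Theorem~\ref{th:first}(a) gives, namely $p_1/k-1\le 1+(\lambda_M(f)-\lambda_M(f)/\sigma_M(f))^+$, equivalently $\lambda_M(f)\ge\frac{(p_1-2k)(p_2-k)}{k(p_2-2k)}$, which is the lower bound in~\eqref{eq:corimp}. As the paper notes after Corollary~\ref{cor:1}, this value is strictly less than $p_1/k-1$ when $p_1<p_2$; a direct computation shows it is also strictly less than $p_1/k-p_1/p_2$. So Theorem~\ref{TheoremLogDeriv} cannot pin $\lambda_M(f)$ down to the exact value $p_1/k-\alpha$ for any $\alpha\in[p_1/p_2,1]$. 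What the paper actually uses for the lower bound is the \emph{integrated} logarithmic derivative estimate Theorem~\ref{thm:oold} (from~\cite{CGHRequiv}), which bounds $\int_{|z|<r}|A(z)|^{1/k}\,dm_2(z)$ by $T(\frac{1+r}{2},f)$ up to logarithms, without any reference to the lower order of $f$; the exact value then emerges by bounding this integral from below using the explicit, piecewise-known profile of $\varphi$ and the thinness of the exceptional set (Lemma~\ref{lemma:approx}). Your route, as written, cannot close the argument and would need to be replaced by this integral comparison.
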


The proof of Theorem~\ref{e:sol_limits2} is rather involved and constitutes the bulk of the paper.
The first part of the proof is a~laborious construction of a~smooth radial subharmonic function $\varphi$ of irregular growth.
Then, we show that there exists an~analytic function $A$ such that $\log |A|$ approximates
$\varphi$ with sufficient precision in a~large subset of $\D$. The upper bound for the lower 
order of growth of any solution of~\eqref{eq:dek}
follows from the irregular growth of the coefficient by a~growth estimate for solutions of linear differential equations.
Meanwhile, the lower bound for the lower order is established by using a~recent integrated logarithmic derivative estimate.
Theorem~\ref{e:sol_limits2} is proved in Section~\ref{sec:proofmain}.

In Theorem~\ref{e:sol_limits2} the order of non-trivial solutions is always $p_2/k-1$, while
the lower order can be any pregiven number on the interval $[p_1/k-1, p_1/k-p_1/p_2]$. In this case, the lower order of growth
is strictly smaller than the order of growth. 
The following theorem reveals 
the general phenomenon 
induced by the irregular growth of the coefficient.

%%%%%%%%%%%%%%%%%%%%%%%%
%%%% ---- THEOREM ---- %%%%
%%%%%%%%%%%%%%%%%%%%%%%%

\begin{theorem} \label{th:first}
Let $k\in\N$ and let $A$ be an~analytic function in $\D$ such that
$\sigma_{M,\deg}(A)=p_2\in (0,\infty)$ and $\lambda_{M,\deg}(A)=p_1$.
Then
\begin{enumerate}
\item[\rm (a)]
all solutions $f$ of \eqref{eq:dek} with $\sigma_M(f)>0$ satisfy
	\begin{equation} \label{eq:th12''}
	\frac{p_1}{k} - 1 \leq 1+\left(\lambda_M(f)-\frac{\lambda_M(f)}{\sigma_M(f)}\right)^+;
	\end{equation}
\item[\rm (b)]
if $2k<k\big(2+\tfrac{p_2-2k}{p_2}\big)<p_1\leoq p_2$, then all non-trivial solutions $f$ of \eqref{eq:dek} satisfy
\begin{equation*} %\label{eq:th12}
\lambda_M(f)-\left(1-\frac{\lambda_M(f)}{\sigma_M(f)}\right)
\le\frac{p_1}{k}-1\leoq\frac{p_2}{k}-1=\sigma_M(f).
\end{equation*}
\end{enumerate}
\end{theorem}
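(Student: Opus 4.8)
The plan is to prove the two parts by genuinely different arguments. Part~(a) will be extracted directly from the new sharp logarithmic derivative estimate involving the lower order of growth, applied to a solution of~\eqref{eq:dek}. Part~(b) will follow by combining a growth estimate for solutions of linear differential equations with~\eqref{eq:growth} and the trivial bound $p_1\le p_2$.

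For part~(a), fix a solution $f$ of~\eqref{eq:dek} with $\sigma_M(f)>0$. Since $\sigma_{M,\deg}(A)=p_2<\infty$, the bound $\sigma_M(f)\le\max\{0,\sigma_{M,\deg}(A)/k-1\}$ quoted above gives $\sigma_M(f)<\infty$, so the logarithmic derivative estimate applies to $f$: for every $\varepsilon>0$ there is a sequence $r_n\to1^{-}$ along which $\log^{+}\bigl|f^{(k)}(z)/f(z)\bigr|\le k\bigl(2+(\lambda_M(f)-\lambda_M(f)/\sigma_M(f))^{+}+\varepsilon\bigr)\log\frac{1}{1-r_n}$ for all $|z|=r_n$. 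Since $|A(z)|=|f^{(k)}(z)/f(z)|$ by~\eqref{eq:dek}, the maximum of the left-hand side over $|z|=r_n$ is $\log^{+}M(r_n,A)$, so
\[
\frac{\log^{+}M(r_n,A)}{\log\frac{1}{1-r_n}}\le k\Bigl(2+\bigl(\lambda_M(f)-\tfrac{\lambda_M(f)}{\sigma_M(f)}\bigr)^{+}+\varepsilon\Bigr).
\]
Letting $n\to\infty$ and then $\varepsilon\to0^{+}$ yields $p_1=\lambda_{M,\deg}(A)\le k\bigl(2+(\lambda_M(f)-\lambda_M(f)/\sigma_M(f))^{+}\bigr)$, which is~\eqref{eq:th12''} after division by $k$.

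For part~(b), note first that $2k<k\big(2+\tfrac{p_2-2k}{p_2}\big)$ forces $p_2>2k$, so~\eqref{eq:growth} applies and gives $\sigma_M(f)=p_2/k-1=(p_2-k)/k>1$ for every non-trivial solution $f$; this is the last equality of the displayed chain, and its middle inequality is just $p_1\le p_2$. Substituting $\sigma_M(f)=(p_2-k)/k$, a short rearrangement shows that the first inequality of the chain is equivalent to $\lambda_M(f)\le p_1/k-p_1/p_2$ (if $p_1=p_2$ this is trivial, since $\lambda_M(f)\le\sigma_M(f)$; so assume $p_1<p_2$). Pick radii $\rho_n\to1^{-}$ realizing $\lambda_{M,\deg}(A)=p_1$, so that $M(\rho_n,A)\le(1-\rho_n)^{-p_1-o(1)}$, and invoke the growth estimate for solutions of~\eqref{eq:dek} in the form $\log M(r,f)\lesssim\int_0^r M(t,A)^{1/k}\,dt$ — the estimate that underlies~\eqref{eq:growth} — together with the facts that $M(\cdot,A)$ is non-decreasing and that $M(t,A)\le(1-t)^{-p_2-\varepsilon}$ for all $t$ near $1$. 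Splitting the integral at the radius $s_n$ determined by $(1-s_n)^{-p_2}\asymp M(\rho_n,A)$, that is $1-s_n\asymp(1-\rho_n)^{p_1/p_2}$, one bounds $\int_0^{s_n}M(t,A)^{1/k}\,dt$ by a constant multiple of $(1-s_n)^{1-p_2/k}$ and $\int_{s_n}^{\rho_n}M(t,A)^{1/k}\,dt$ by $(\rho_n-s_n)M(\rho_n,A)^{1/k}$; both turn out to be at most $(1-\rho_n)^{-(p_1/k-p_1/p_2)-o(1)}$. Hence $\log\log M(\rho_n,f)\le\bigl(p_1/k-p_1/p_2+o(1)\bigr)\log\frac{1}{1-\rho_n}$, and taking $\liminf$ along $(\rho_n)$ gives $\lambda_M(f)\le p_1/k-p_1/p_2$. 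The assumption $k\big(2+\tfrac{p_2-2k}{p_2}\big)<p_1$ is there to fix the regime: given $p_2>2k$ it is equivalent to $p_1/k-p_1/p_2>1$, which guarantees that the quantity just estimated dominates the lower-order contribution present in the growth estimate and keeps us in the genuinely irregular range $\lambda_M(f)<\sigma_M(f)$.

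I expect the main obstacle to be this last estimate in part~(b). The growth estimate controls $\log M(r,f)$ through the maximum modulus of $A$ over \emph{all} circles $\{|z|\le r\}$, whereas the hypothesis only provides smallness of $M(\rho_n,A)$ on the single circle $\{|z|=\rho_n\}$; the point is that analyticity makes $M(\cdot,A)$ non-decreasing, so this smallness propagates inward, while the sharp upper bound $M(t,A)\lesssim(1-t)^{-p_2-\varepsilon}$ handles the part of the integral with $t$ well below $\rho_n$, the optimal split occurring precisely at $1-s_n\asymp(1-\rho_n)^{p_1/p_2}$ — which is where the improvement $-p_1/p_2$ over the naive bound $\lambda_M(f)\le p_1/k$ originates. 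A secondary technical point is that the growth estimate, and the underlying control of the right-derivative of $\log M(r,f)$ by $M(r,A)^{1/k}$ furnished by Wiman--Valiron theory adapted to the lower order, may hold only off an exceptional set of radii; one then replaces $\rho_n$ by a nearby radius outside that set, which is harmless since $M(\cdot,A)$ is monotone and $1-\rho_n$ changes by at most a bounded factor.
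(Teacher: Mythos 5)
Part~(a) follows the paper's own argument essentially verbatim: apply Theorem~\ref{TheoremLogDeriv} to a solution~$f$, read off the bound on $\lambda_{M,\deg}(A)$ from $M(r,A)=M(r,f^{(k)}/f)$ along radii in the set of upper density one, and let $\varepsilon\to0^+$. Passing to a sequence $r_n\to1^-$ instead of working on the full set is a cosmetic variant and gives the same conclusion.

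Part~(b) is where you genuinely depart from the paper, and your route is both simpler and more general. The paper's proof combines three nontrivial ingredients: Proposition~\ref{p:k_low_ord} (the bound $\lambda_*(f)\ge\lambda_M(f)+\lambda_M(f)/\sigma_M(f)$), part~(a) of the theorem to force $\lambda_*(f)>1$ (this is exactly where the hypothesis $p_1>k\big(2+\tfrac{p_2-2k}{p_2}\big)$ enters), and Strelitz's Wiman--Valiron asymptotic (Theorem~\ref{t:Strelitz}(i)) to conclude $\lambda_*(f)\le p_1/k$. You bypass all of that: pick $\rho_n$ realizing $\lambda_{M,\deg}(A)=p_1$, split $\int_0^{\rho_n}M(t,A)^{1/k}\,dt$ at $1-s_n\asymp(1-\rho_n)^{p_1/p_2}$, control the inner part by $\sigma_{M,\deg}(A)=p_2$ and the outer part by monotonicity of $M(\cdot,A)$, and feed the result into the unconditional growth estimate $\log M(r,f)\lesssim\int_0^r M(t,A)^{1/k}\,dt$ from \cite[Theorem~5.1]{HKR:2004}. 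Both pieces are $O\big((1-\rho_n)^{-(p_1/k-p_1/p_2)-o(1)}\big)$, giving $\lambda_M(f)\le p_1/k-p_1/p_2$, which after substituting $\sigma_M(f)=p_2/k-1$ is precisely the claimed inequality. This is correct, avoids the Wiman--Valiron theory entirely, and in fact needs only $p_2>2k$ and $0<p_1\le p_2$; the extra hypothesis $p_1>k\big(2+\tfrac{p_2-2k}{p_2}\big)$ plays no role in your argument, so in effect you have proved a stronger statement. Your split is also cleaner than the paper's Proposition~\ref{prop:extra-de}, which uses the same growth estimate but with intermediate radii pinned by level conditions on $M(\cdot,A)$ rather than by the $\liminf$ sequence and a power law, and which therefore obtains only the weaker bound $\xi/k-1$.

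Two side remarks in your write-up are inaccurate, though neither is load-bearing. First, the assertion that $k\big(2+\tfrac{p_2-2k}{p_2}\big)<p_1$ is equivalent to $p_1/k-p_1/p_2>1$ is false: the former is $p_1>k(3p_2-2k)/p_2$, the latter is $p_1>kp_2/(p_2-k)$, and these differ (for $k=1$, $p_2=4$ they give $p_1>5/2$ versus $p_1>4/3$). The actual role of the extra hypothesis in the paper is to guarantee $\lambda_*(f)>1$ so that Strelitz's theorem applies; your proof simply does not need it. Second, the caution that the growth estimate might hold only off an exceptional set of radii is misplaced: \cite[Theorem~5.1]{HKR:2004} is a global estimate valid for every $r\in[0,1)$, with no exceptional set, so no adjustment of $\rho_n$ is needed.
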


The following result shows, analogously to the plane situation in \eqref{eq:poly}, that if $A$ has
regular growth in $\D$, then the solutions of \eqref{eq:dek} have regular growth as well.

\begin{corollary} \label{cor:1} 
Let $k\in\N$ 
and let $A$ be an~analytic function in $\D$ such that
$\sigma_{M,\deg}(A)=p_2$ and $\lambda_{M,\deg}(A)=p_1$, where
$2k<k(2+\frac{p_2-2k}{p_2})<p_1\leq p_2<\infty$.
Then all non-trivial solutions $f$ of \eqref{eq:dek} satisfy
  \begin{equation}\label{e:deviat_est}
  \left|\lambda_M(f)-\frac{p_1}{k}+1\right|\le1-\frac{\lambda_M(f)}{\sigma_M(f)}.
  \end{equation}
\end{corollary}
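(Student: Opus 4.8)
The plan is to read the corollary directly off Theorem~\ref{th:first}, so the whole argument is essentially bookkeeping. First I would unwind the hypotheses. The chain $2k<k\big(2+\tfrac{p_2-2k}{p_2}\big)<p_1\le p_2<\infty$ gives in particular $p_1>2k$, hence $\tfrac{p_1}{k}-1>1$, and also $p_2\ge p_1>2k$. Consequently, by the conclusion of Theorem~\ref{th:first}(b) (equivalently, by~\eqref{eq:growth}), every non-trivial solution $f$ of~\eqref{eq:dek} satisfies $\sigma_M(f)=\tfrac{p_2}{k}-1>1>0$. In particular $\sigma_M(f)>0$, so both parts~(a) and~(b) of Theorem~\ref{th:first} apply to $f$.

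Theorem~\ref{th:first}(b) then immediately yields
\[
\lambda_M(f)-\frac{p_1}{k}+1\le 1-\frac{\lambda_M(f)}{\sigma_M(f)},
\]
which is one of the two inequalities contained in~\eqref{e:deviat_est}.

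For the reverse inequality I would invoke Theorem~\ref{th:first}(a). Since $\sigma_M(f)>1$ we have $1-\tfrac1{\sigma_M(f)}>0$, and therefore
\[
\left(\lambda_M(f)-\frac{\lambda_M(f)}{\sigma_M(f)}\right)^{+}
=\lambda_M(f)\left(1-\frac1{\sigma_M(f)}\right)
=\lambda_M(f)-\frac{\lambda_M(f)}{\sigma_M(f)},
\]
i.e., the positive part in~\eqref{eq:th12''} can simply be dropped. Hence~\eqref{eq:th12''} reads $\tfrac{p_1}{k}-1\le 1+\lambda_M(f)-\tfrac{\lambda_M(f)}{\sigma_M(f)}$, that is,
\[
\frac{p_1}{k}-1-\lambda_M(f)\le 1-\frac{\lambda_M(f)}{\sigma_M(f)}.
\]
Combining this with the inequality obtained from~(b) gives $\big|\lambda_M(f)-\tfrac{p_1}{k}+1\big|\le 1-\tfrac{\lambda_M(f)}{\sigma_M(f)}$, which is~\eqref{e:deviat_est}; the right-hand side is automatically non-negative because $\lambda_M(f)\le\sigma_M(f)$.

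There is no genuine obstacle here once Theorem~\ref{th:first} is in hand. The only point demanding a moment's care is the observation that the precise numerical hypothesis on $p_1,p_2,k$ forces both $\sigma_M(f)>1$ (so that part~(a) is applicable and its positive part collapses) and $\tfrac{p_1}{k}-1>1$; these are exactly the conditions that let the two one-sided estimates of Theorem~\ref{th:first} fuse into the symmetric bound~\eqref{e:deviat_est}.
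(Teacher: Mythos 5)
Your proof is correct and is exactly the intended derivation: the paper does not write out a proof of Corollary~\ref{cor:1} precisely because it follows by combining the two one-sided bounds of Theorem~\ref{th:first}(a) and~(b) after noting that $p_2>2k$ forces $\sigma_M(f)=p_2/k-1>1$, which both legitimizes part~(a) and makes the positive part in~\eqref{eq:th12''} redundant. Your bookkeeping (including the observation that $1-\lambda_M(f)/\sigma_M(f)\ge 0$) is accurate and complete.
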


We may re-write \eqref{e:deviat_est} in the form 
	\begin{equation} \label{eq:corimp}
	\frac{p_1-2k}{p_2-2k} \Bigl( \frac{p_2}{k}-1 \Bigr) \le \lambda_M(f) 
	\le \frac{p_1}{p_2} \Bigl( \frac{p_2}{k}-1 \Bigr).
	\end{equation}
To see that the upper bound in \eqref{eq:corimp} is sharp, choose $\alpha=p_1/p_2$ in 
Theorem~\ref{e:sol_limits2}. It would be desirable to show that the lower bound in~\eqref{eq:corimp}
can be replaced by the value $p_1/k-1$, which corresponds to $\alpha=1$ in Theorem~\ref{e:sol_limits2}. However,
it is not known whether this is true, unless $p_2=p_1$. In this case, \eqref{eq:corimp} reduces to the equality
$\sigma_M(f)=\lambda_M(f)$ by~\eqref{eq:growth}. In fact, we already know this, even under weaker hypothesis, by Theorem~\ref{thm:new}.

The proof of Theorem~\ref{th:first} is given in Section~\ref{sec:th:first}, and it depends on the Wiman-Valiron theory adapted for the lower order of growth. Therefore, we need a~good understanding of the quantities
\begin{equation} \label{eq:def_ls}
	\lambda_*(f)=\liminf_{r\to1^-}\frac{\log^+ K(r,f)}{\log\frac1{1-r}},
        \quad
	\sigma_*(f)=\limsup_{r\to1^-}\frac{\log^+ K(r,f)}{\log\frac1{1-r}},
\end{equation}
where $K(r,f)=r(\log M(r,f))'_+$ for $0\leq r < 1$. Here the plus sign refers to the right derivative.  
In Section~\ref{sec:maxmod}, we prove that each function $f$ analytic in $\D$ satisfies 
$\lambda_*(f)\le\lambda_M(f)+1$ if $f$ is unbounded, 
and $\lambda_M(f)+\frac{\lambda_M(f)}{\sigma_M(f)}\le\lambda_*(f)$ if $\sigma_M(f)>0$. Both estimates are shown to be sharp.

In addition to the Wiman-Valiron theory,
the proof of Theorem~\ref{th:first} strongly relies on a~new logarithmic derivative estimate involving the lower order of growth,
which is stated as Theorem~\ref{TheoremLogDeriv} below.
This result
complements a~known logarithmic derivative estimate for functions of finite maximum modulus order given in terms of a proximate order~\cite{CHR2010}.
To the best of our knowledge, logarithmic derivative estimates involving the lower order of growth do not appear in the existing literature.
The upper density of a measurable set $E\subset[0,1)$ is defined as
    $$
    \DU(E)=\limsup_{r\to1^-}\frac{m_1(E\cap[r,1))}{1-r},
    $$
where $m_1(F)$ denotes the one-dimensional Lebesgue measure of the set $F$.

%%%%%%%%%%%%%%%%%%%%%%%%
%%%% ---- THEOREM ---- %%%%
%%%%%%%%%%%%%%%%%%%%%%%%

\begin{theorem}\label{TheoremLogDeriv}
Let $f$ be an analytic function in $\D$ such that $0\leq \lambda_M(f)\leq\sigma_M(f)< \infty$. Let $k$ and $j$ be
integers satisfying $k>j\ge0$, and let $\varepsilon \in (0,1)$. Then there
exist a set $E=E(\varepsilon,f,k,j)\subset[0,1)$ satisfying $\DU(E)=1$ and a constant $C=C(\varepsilon,f,k,j)>0$ such that
    \begin{equation} \label{EqLogEstimate>1} 
    \bigg|\frac{f^{(k)}(z)}{f^{(j)}(z)}\bigg|
    \le C\left(\frac{1}{(1-|z|)^{2+\left(\lambda_M(f)-\frac{\lambda_M(f)}{\sigma_M(f)}\right)^++\varepsilon}}\right)^{k-j}
    \end{equation}
for all $z\in\D$ for which $|z|\in E$.
\end{theorem}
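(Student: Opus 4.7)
The plan is to combine a construction of a set of radii derived from the $\liminf$-definition of $\lambda_M(f)$ with a pointwise Poisson--Jensen type logarithmic derivative estimate, and then to optimize a free outer radius so as to extract the sharp exponent. Throughout, write $u(r)=\log^+ M(r,f)$.

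First I would extract from $\lambda_M(f)=\liminf_{r\to1^-}\log^+ u(r)/\log(1/(1-r))$ a sequence $\rho_n\to1^-$ along which $u(\rho_n)\le (1-\rho_n)^{-\lambda_M(f)-\varepsilon/5}$. Since $u$ is non-decreasing, this bound propagates (up to a constant depending only on $c$) to intervals $[b_n,\rho_n]$ with $1-b_n=(1-\rho_n)/c$ for any fixed $c\in(0,1)$; a standard diagonal construction over $c\downarrow 0$ produces a set $E_0\subset[0,1)$ with $\overline{D}(E_0)=1$ on which $u(r)\le (1-r)^{-\lambda_M(f)-\varepsilon/4}$ for $r$ close enough to $1$.

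Next I would invoke a pointwise Poisson--Jensen logarithmic derivative bound in the spirit of \cite{CHR2010}: for $|z|=r<R<1$, provided $z$ is kept away from zeros of $f^{(j)}$,
\[
\left|\frac{f^{(k)}(z)}{f^{(j)}(z)}\right|\le C\left(\frac{u(R)}{(R-r)^2}\right)^{k-j}.
\]
The zero-avoidance is achieved by removing from $E_0$ a further set of radii whose circles $|z|=r$ pass too close to zeros of $f^{(j)}$; a Cartan-type measure argument shows this removal does not disturb the upper density $1$, and the surviving set is the $E$ in the statement. For the choice of $R=R(r)$ at $r\in E$, I would take an auxiliary outer radius $R'=(1+r)/2$, for which $u(R')\le(1-R')^{-\sigma_M(f)-\varepsilon/4}\asymp(1-r)^{-\sigma_M(f)-\varepsilon/4}$ by the definition of $\sigma_M(f)$, and set $R-r=\alpha(R'-r)$ with $\alpha\in(0,1)$ to be chosen. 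Hadamard's three-circles theorem (convexity of $u$ in $\log r$) gives $u(R)\le(1-\alpha)u(r)+\alpha u(R')$, and balancing the two terms by the choice $\alpha\asymp(1-r)^{\sigma_M(f)-\lambda_M(f)}$ when $\lambda_M(f)<\sigma_M(f)$ (and $\alpha$ constant otherwise) transforms $u(R)/(R-r)^2$ into the required bound with exponent $2+(\lambda_M(f)-\lambda_M(f)/\sigma_M(f))^++\varepsilon$.

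The main obstacle is this last optimization: a careless choice of $R$ yields only the weaker exponents $\lambda_M(f)+2+\varepsilon$ (if $R$ is taken well inside $E$) or $\sigma_M(f)+2+\varepsilon$ (the standard Gundersen choice $R=(1+r)/2$ and a direct $\sigma_M$-bound on $u(R)$), and capturing the sharp combination $2+(\lambda_M(f)-\lambda_M(f)/\sigma_M(f))^+$ hinges on the quantitative interpolation above between the $\lambda_M$-regime at $r$ and the $\sigma_M$-regime at $R'$. A secondary technical difficulty is maintaining the bound pointwise on entire circles $|z|=r\in E$, rather than only outside an angular exceptional set, which is what forces the Cartan-type handling of the zeros of $f^{(j)}$ in the construction of $E$.
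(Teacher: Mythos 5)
Your plan has a genuine gap at the decisive step, and the proposed optimization cannot reach the stated exponent. After taking $R' = (1+r)/2$, $R - r = \alpha(R'-r)$, and using Hadamard's three-circles theorem $u(R) \le (1-\alpha)u(r) + \alpha u(R')$, one gets (with $\delta = 1-r$, dropping the $\varepsilon$-terms)
\[
\frac{u(R)}{(R-r)^2} \lesssim \frac{1-\alpha}{\alpha^2}\,\delta^{-\lambda_M - 2} + \frac{1}{\alpha}\,\delta^{-\sigma_M - 2}.
\]
Both terms are strictly decreasing in $\alpha$ on $(0,1)$, so the infimum over $\alpha$ is attained as $\alpha \to 1$ and equals $\delta^{-\sigma_M - 2}$: the interpolation can never beat the naive exponent $\sigma_M + 2$. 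In particular your proposed choice $\alpha \asymp \delta^{\sigma_M - \lambda_M}$ gives $u(R) \asymp \delta^{-\lambda_M}$ but forces $(R-r)^2 \asymp \delta^{2(\sigma_M-\lambda_M)+2}$, so that $u(R)/(R-r)^2 \asymp \delta^{-(2\sigma_M - \lambda_M + 2)}$, which is \emph{worse} than $\sigma_M + 2$, not better. Since $\sigma_M + 2 > 2 + (\lambda_M - \lambda_M/\sigma_M)^+$ whenever $\sigma_M > 0$, a pointwise bound of the form $u(R)/(R-r)^2$ with a single outer radius $R$ is structurally incapable of producing the sharp exponent; no balancing of $R$ repairs this.

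The paper circumvents this by replacing the pointwise quantity $u(R)$ with the weighted integral
\[
I_\alpha(R) = \frac{1}{(1-R)^{1/\alpha}}\left(\int_0^R \log^+ M(t,f)\,(R-t)^{1/\alpha - 1}\,dt + \log^+ M(R_0,f)\right),
\]
which naturally appears in Linden's local estimates (Theorem~\ref{TheoremLinden2}) for $n(\zeta,h,f)$ and for the lower bound on $u(\zeta,h)$; the latter is also what allows one to control $\log^-|f|$ on the outer circle, a point your sketch leaves unaddressed. The sharp exponent emerges from Lemma~\ref{l:i_est}: the integral is split at $R^\star$ defined by $(1-R^\star)^{\sigma_M - \eta} = (1-R)^{\lambda_M + \eta}$, the order bound on $\log^+ M$ is used on $[0,R^\star]$, the lower-order bound (valid on the interval $[R_n^\star,R_n]$ by monotonicity of $M$) on $[R^\star,R]$, and the weight $(R-t)^{1/\alpha - 1}$ integrated over $[R^\star,R]$ contributes a factor $\asymp (1-R^\star)^{1/\alpha} = (1-R)^{(\lambda_M+\eta)/(\alpha(\sigma_M - \eta))}$ that precisely produces $\lambda_M/\sigma_M$ in the exponent as $\alpha\to 1$ and $\eta\to 0$. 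None of this is recoverable from $u(R)$ alone; the integral averaging is essential. Your diagonal construction of the radial set $E_0$ is fine, but the density-one bookkeeping and the treatment of zeros in the paper are also done via Linden's estimate together with a decomposition into dyadic annuli and a Cartan-type argument (Lemma~\ref{l:N_int_est} and equations~\eqref{e:log_est}--\eqref{e:log_der_pol_est}), not just a single Cartan exceptional set around the zeros of $f^{(j)}$.
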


The proof of Theorem~\ref{TheoremLogDeriv} is given in Section~\ref{sec:TLD}, and it is based on~an~approach similar to that in~\cite[Theorem~1.2]{CHR2010}.
In the present paper, we take advantage of the full strength of Linden's result, which is stated as Theorem~\ref{TheoremLinden2} below.
This result provides a~local estimate for the behavior of an~analytic function in terms of a~quantity depending on the maximum modulus,
and it turns out that this quantity can be further estimated in terms of the lower order of growth.

The proof of Theorem~\ref{thm:new} depends on Theorem~\ref{th:first}
and on an~auxiliary result on the lower order of solutions of~\eqref{eq:dek}.
The proof and the auxiliary result are presented in Section~\ref{sec:proofnew}. 
In the final Section~\ref{sec:example} we discuss an~example, which addresses the case when 
the condition $\lambda_{M,\deg}(A)>2k$ in Theorem~\ref{th:first}(b) is not satisfied.
Then  the correlations between 
the growth indicators of the coefficient 
and of solutions become even more complicated.

\section{Proof of Theorem~\ref{e:sol_limits2}} \label{sec:proofmain}

The laborious proof is divided into three parts.

\subsection{Subharmonic functions of irregular growth}

In Lemma~\ref{lemma:app} below, we construct sequences $\{r_n\}$, $\{r_n'\}$,
$\{r_n''\}$, $\{r_n^\star\}$, $\{\widehat{r}_n\}$ each of which satisfy the property
$(1-\rho_{n+1})/(1-\rho_n) \to 0$
as $n\to\infty$. Therefore $1-\rho_n$ decreases to zero faster than any geometric progression, 
inheriting many properties of such progressions.

\begin{lemma} \label{lemma:app}
Let $0<p_1<p_2\le p<\infty$ be constants,
and let $\{\eta_n\}$ be an increasing unbounded sequence such that $\eta_n>1$ for all $n$.

{\rm (a)} There exist a~positive constant $C$ and sequences $\{r_n\}$, $\{r_n'\}$,
$\{r_n''\}$, $\{r_n^\star\}$, $\{\widehat{r}_n\}$, $\{M_n\}$, $\{R_n\}$, $\{\varepsilon_n\}$ such that
$r_n<r_n'\le\widehat{r}_n<r_n^\star<r_n''<r_{n+1}$, $|\varepsilon_n| < (p_2-p_1)/2$, and
\begin{enumerate}
\item[\rm (i)] $\displaystyle \left( \frac{C}{1-r_n'} \right)^{p_1} = \left( \frac{C}{1-r_n} \right)^{p_2+\varepsilon_n}$;

\item[\rm (ii)] $\displaystyle r_n^\star = \widehat{r}_n + \frac{1-\widehat{r}_n}{\log\frac{C}{1-\widehat{r}_n}}$;

\item[\rm (iii)] $\displaystyle R_n = \frac{p_2+\varepsilon_n}{1-r_n} \, r_n$;

\item[\rm (iv)] $\displaystyle \frac{1}{(1-\widehat{r}_n)^{p_2}}=\frac1{(1-r_n')^p};\quad p=p_2\Rightarrow \widehat{r}_n=r_n';$

\item[\rm (v)] $\displaystyle M_n = \frac{p_2-p_1}{(1-\widehat{r}_n)^2} \left( \log\frac{C}{1-\widehat{r}_n} \right)^2$;

\item[\rm (vi)] $\displaystyle
\frac{R_n \log\frac{r_n''}{r_n} + M_n \int_{\widehat{r}_n}^{r_n^\star} \log\frac{r_n''}{t} \, dt-p_1\frac{r_n''-r_n'}{1-r_n'}}{\log\frac{C}{1-r_n''}}
= p_2-p_1 + \varepsilon_{n+1}$;

\item[\rm (vii)] $\displaystyle
\left( R_n + M_n (r_n^\star - \widehat{r}_n) -p_1\frac{r_n''}{1-r_n'}\right)  \frac{1-r_n''}{r_n''} = p_2-p_1 + \varepsilon_{n+1}$;

\item[\rm (viii)] $\displaystyle
r_{n+1} = 1 - \frac{1-r_n''}{\eta_n}$.
\end{enumerate}
Moreover, 
$r_n\to 1^-$ and $\varepsilon_n \to 0$ as $n\to\infty$, and further,
	$$
	1-r_n''\sim\frac{1-\widehat{r}_n}{\log\frac1{1-\widehat{r}_n}},\quad n\to\infty.
	$$

{\rm (b)} If $\varphi:[0,1)\to(0,\infty)$ is given by
\begin{equation*}
\varphi(r) = \begin{cases}
(p_2+\varepsilon_n) \log\frac{C}{1-r}, & r_{n-1}'' \leq r < r_n,\\
(p_2+\varepsilon_n) \log\frac{C}{1-r_n} + R_n \log\frac{r}{r_n}, & r_n \leq r < r_n',\\
p_1 \log\frac{C}{1-r_n'} + R_n\log\frac{r}{r_n} + p_1\left(\log\frac{1-r_n'}{1-r}-\frac{r-r_n'}{1-r_n'}\right), & r_n'\leq r < \widehat{r}_n,\\
p_1 \log\frac{C}{1-r} + R_n\log\frac{r}{r_n} - p_1\frac{r-r_n'}{1-r_n'}+ M_n \int_{\widehat{r}_n}^r \log\frac{r}{t} \, dt, & \widehat{r}_n\leq r < r_n^\star,\\
p_1 \log\frac{C}{1-r} + R_n\log\frac{r}{r_n} - p_1\frac{r-r_n'}{1-r_n'}+ M_n \int_{\widehat{r}_n}^{r_n^\star} \log\frac{r}{t} \, dt, & r_n^\star \leq r < r_n'',\\
\end{cases}
\end{equation*}
where $n\in\N$, then $\varphi$ is continuously differentiable and
\begin{equation} \label{eq:diff}
%\Delta\phi(z) = 
\frac{1}{r} \, (r \varphi'(r))' =
\begin{cases}
\frac{p_2+\varepsilon_n}{r (1-r)^2}, & r_{n-1}''<r<r_n,\\
0,&r_n<r<r_n',\\
\frac{p_1}{r}\left(\frac1{(1-r)^2}-\frac1{1-r_n'}\right), & r_n'<r<\widehat{r}_n,\\
\frac{p_1}{r}\left(\frac1{(1-r)^2}-\frac1{1-r_n'}\right)+\frac{M_n}{r}, & \widehat{r}_n<r<r_n^\star,\\
\frac{p_1}{r}\left(\frac1{(1-r)^2}-\frac1{1-r_n'}\right), & r_n^\star<r<r_n''.
\end{cases}
\end{equation}
Moreover,
\begin{equation} \label{eq:limits}
\limsup_{r\to 1^-} \frac{\varphi(r)}{\log\frac{1}{1-r}} = p_2, \quad
\liminf_{r\to 1^-} \frac{\varphi(r)}{\log\frac{1}{1-r}} = p_1.
\end{equation}

The radial extension $\varphi: \D \to (0,\infty)$, given by $\varphi(z)=\varphi(|z|)$ for all $z\in\D$, is subharmonic
and satisfies $\Delta\phi(re^{i\theta}) = \frac{1}{r} \, (r \varphi'(r))'$ for all $\theta\in\R$.
\end{lemma}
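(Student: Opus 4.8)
The plan is to read the eight relations (i)--(viii) as exactly the bookkeeping that turns the piecewise formula for $\varphi$ in part (b) into a $C^1$ function: (ii), (iv), (v) are the definitions of $r_n^\star$, $\widehat{r}_n$, $M_n$; (iii) is the $C^1$-matching at $r_n$ and (i) the continuity (hence $C^1$-) matching at $r_n'$; (vi) and (vii) are, respectively, the continuity and the $C^1$-matching at $r_n''$ (equivalently, at the left endpoint $r_n''$ of the next block), while the matchings at $\widehat{r}_n$ and $r_n^\star$ are automatic from the shape of the formulas; and (viii) defines $r_{n+1}$. Thus part (a) will be a recursion: fix $C>1$ and $r_1$ close to $1$, set $\varepsilon_1=0$, and given $r_n$ close to $1$ with $|\varepsilon_n|<(p_2-p_1)/2$, read off $r_n',\widehat{r}_n,r_n^\star,R_n,M_n$ from (i)--(v), solve the $2\times 2$ system (vi)--(vii) for $(r_n'',\varepsilon_{n+1})$, and put $r_{n+1}=1-(1-r_n'')/\eta_n$.

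The chain of inequalities and the elementary limits are routine. From (i), $1-r_n'=C^{1-(p_2+\varepsilon_n)/p_1}(1-r_n)^{(p_2+\varepsilon_n)/p_1}$ with exponent $(p_2+\varepsilon_n)/p_1\ge\frac{p_1+p_2}{2p_1}=:q>1$, so $r_n<r_n'$; together with $1-r_{n+1}<1-r_n''<1-r_n'$ this gives $1-r_{n+1}<C^{1-q}(1-r_n)^q$, whence $1-r_n$ decreases super-geometrically and $r_n\to1^-$. Relation (iv) with $p\ge p_2$ yields $r_n'\le\widehat{r}_n$ (equality iff $p=p_2$), (ii) gives $\widehat{r}_n<r_n^\star<1$ once $\log\frac{C}{1-\widehat{r}_n}>1$, and (iii), (v) just produce $R_n,M_n>0$; also $r_n''<r_{n+1}$ is immediate from (viii) since $\eta_n>1$.

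I expect the main obstacle to be the coupled system (vi)--(vii). Eliminating $\varepsilon_{n+1}$, write $g(r)$ for the left side of (vii) (with $r$ in place of $r_n''$) minus $p_2-p_1$, and $h(r)$ for the quotient in (vi) minus $p_2-p_1$; the task becomes finding $r_n''\in(r_n^\star,1)$ with $F(r_n''):=g(r_n'')-h(r_n'')=0$ and then setting $\varepsilon_{n+1}=g(r_n'')$. By (v) and (ii), $M_n(r_n^\star-\widehat{r}_n)=\frac{p_2-p_1}{1-\widehat{r}_n}\log\frac{C}{1-\widehat{r}_n}$, and this is the dominant term both in $g$ and in the numerator of $h$: the quantities $R_n$, $\frac{p_1r}{1-r_n'}$ are of orders $\frac1{1-r_n}$, $\frac1{1-r_n'}$, both $o\big(\frac1{1-\widehat{r}_n}\log\frac1{1-\widehat{r}_n}\big)$, while $R_n\log\frac r{r_n}$ and $p_1\frac{r-r_n'}{1-r_n'}$ stay bounded. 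Parametrising $r=1-(1-\widehat{r}_n)(1+s)/\log\frac1{1-\widehat{r}_n}$ and letting $n\to\infty$ with $s$ fixed, a direct estimate gives $g(r)\to s(p_2-p_1)$ and $h(r)\to0$; for $h$ one uses $\int_{\widehat{r}_n}^{r_n^\star}\log\frac rt\,dt=\frac{(1-\widehat{r}_n)^2}{\log\frac{C}{1-\widehat{r}_n}}(1+o(1))$ and the fact that the denominator $\log\frac{C}{1-r}=\log\frac{C}{1-\widehat{r}_n}+\log\log\frac1{1-\widehat{r}_n}+O(1)$ exceeds the numerator's leading part $(p_2-p_1)\log\frac{C}{1-\widehat{r}_n}$ by the lower-order term $(p_2-p_1)\log\log\frac1{1-\widehat{r}_n}$. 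Hence for each fixed small $\delta>0$ and $n$ large, $F>0$ at $s=\delta$ and $F<0$ at $s=-\delta$, so the intermediate value theorem produces $r_n''$ with $1-r_n''\in\big(\frac{(1-\delta)(1-\widehat{r}_n)}{\log\frac1{1-\widehat{r}_n}},\frac{(1+\delta)(1-\widehat{r}_n)}{\log\frac1{1-\widehat{r}_n}}\big)$ and, since $g$ is decreasing there, $|\varepsilon_{n+1}|\le\delta(p_2-p_1)+o(1)$. Taking $\delta=\tfrac14$ and $r_1$ so close to $1$ that $\widehat{r}_1$ is already past the threshold behind these estimates closes the induction (giving $|\varepsilon_n|<(p_2-p_1)/2$ and the needed monotonicity for all $n$), and letting $\delta\to0$ as $n\to\infty$ yields $\varepsilon_n\to0$ and $1-r_n''\sim\frac{1-\widehat{r}_n}{\log\frac1{1-\widehat{r}_n}}$. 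The bookkeeping of which terms dominate, plus the uniformity needed to start the recursion at $n=1$, is the delicate point.

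For part (b), continuity and $C^1$-matching of $\varphi$ at $r_n,r_n',\widehat{r}_n,r_n^\star,r_n''$ follow by direct substitution, the only non-automatic cases being (iii), (i) and (vi)--(vii); positivity $\varphi>0$ holds since on every piece $\varphi(r)\ge p_1\log\frac C{1-r}-p_1>0$ for $r_1$ close to $1$. Formula \eqref{eq:diff} comes from differentiating $r\varphi'(r)$, which on the five pieces equals $\frac{(p_2+\varepsilon_n)r}{1-r}$, the constant $R_n$, $R_n+p_1r\big(\frac1{1-r}-\frac1{1-r_n'}\big)$, that plus $M_n(r-\widehat{r}_n)$, and that plus $M_n(r_n^\star-\widehat{r}_n)$; each resulting expression in \eqref{eq:diff} is $\ge0$ because $r>r_n'$ forces $\frac1{(1-r)^2}>\frac1{1-r_n'}$. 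Since $\varphi\in C^1([0,1))$ and $\frac1r(r\varphi'(r))'\ge0$, the radial extension is subharmonic on $\D$ (the origin is harmless, $\log\frac C{1-|z|}$ being subharmonic) and, being radial, satisfies $\Delta\varphi(re^{i\theta})=\frac1r(r\varphi'(r))'$. Finally, for \eqref{eq:limits}: on pieces $1$--$2$, $\varphi(r)/\log\frac1{1-r}$ lies in $[p_1,p_2+o(1)]$ and equals $p_2+o(1)$ along $r=r_n$; on pieces $3$--$5$, $\varphi(r)=p_1\log\frac C{1-r}+O(1)+M_n\int_{\widehat{r}_n}^{\min(r,r_n^\star)}\log\frac rt\,dt$ with the last term nonnegative and, using (v), (ii) on piece $4$ and (vi) together with $\log\frac C{1-r_n''}\sim\log\frac1{1-\widehat{r}_n}$ on piece $5$, bounded by $(p_2-p_1+o(1))\log\frac1{1-r}$; hence the ratio stays in $[p_1-o(1),p_2+o(1)]$ and equals $p_1+o(1)$ along $r=r_n'$, giving the two limits.
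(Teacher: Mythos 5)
Your proposal follows the paper's overall strategy closely: you read (i)--(viii) as the bookkeeping that makes the piecewise $\varphi$ of part (b) continuously differentiable (indeed (iii) is $C^1$-matching at $r_n$, (i) continuity at $r_n'$, (vi)/(vii) continuity and $C^1$-matching at $r_n''$, and the remaining junctions are automatic), you run the same recursion $(r_n,\varepsilon_n)\mapsto(r_{n+1},\varepsilon_{n+1})$, and you verify (b) by differentiation and by bounding the ratio $\varphi(r)/\log\frac{1}{1-r}$ piece by piece. Where you genuinely diverge from the paper is in how you locate $r_n''$. The paper eliminates $\varepsilon$, writes the result as $g_L(r)=g_R(r)$, proves $g_L$ strictly decreasing and $g_R$ strictly increasing on $[r_n^\star,1)$ (this requires $C>e^{p_2/(p_2-p_1)}$ and gives \emph{uniqueness} of the root), and then brackets the root in a wide window $(\alpha_n,\beta_n)$ with $1-\alpha_n\asymp (1-\widehat r_n)(\log\frac{C}{1-\widehat r_n})^{-1/2}$ and $1-\beta_n\asymp(1-\widehat r_n)(\log\frac{C}{1-\widehat r_n})^{-2}$; the asymptotic $1-r_n''\sim(1-\widehat r_n)/\log\frac1{1-\widehat r_n}$ is then read off from (vii) together with $\varepsilon_n\to 0$. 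You instead parametrise $1-r=(1-\widehat r_n)(1+s)/\log\frac1{1-\widehat r_n}$ and apply the intermediate value theorem directly in a narrow window $|s|<\delta$, using $g(r)\to s(p_2-p_1)$ and $h(r)\to 0$ uniformly for $s$ bounded. This gives existence with $|\varepsilon_{n+1}|\le\delta(p_2-p_1)+o(1)$ without invoking monotonicity, and lets you keep $C>1$ fixed while pushing $r_1$ towards $1$ to start the induction. The trade-off is that you do not show the root is unique; to make the ``letting $\delta\to 0$'' remark at the end fully rigorous you should note that whichever root in $|s|<1/4$ you designate as $r_n''$ satisfies $\varepsilon_{n+1}=h(r_n'')\to 0$ (since $h\to 0$ uniformly on the bracket), and then $\varepsilon_{n+1}=g(r_n'')=(p_2-p_1)s_n+o(1)$ forces $s_n\to 0$, which simultaneously delivers $\varepsilon_n\to 0$ and $1-r_n''\sim(1-\widehat r_n)/\log\frac1{1-\widehat r_n}$. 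With that small clarification the argument is complete; the paper's monotonicity route buys uniqueness of $r_n''$ (not required by the statement) at the cost of also tuning $C$, whereas your route buys a tighter localisation and a cleaner derivation of the asymptotic.
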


%%%%%%%%%%%%%%%%%%%%%%
%%%% ---- PROOF ---- %%%%
%%%%%%%%%%%%%%%%%%%%%%

\begin{proof}
(a) The precise value for the (large) positive constant $C$ depends on many factors,
and will be dealt with later. At this point, it suffices to assume $C\geq e$.
The sequences in the assertion are defined iteratively.
Define $r_0''=0$, $r_1\in (0,1)$ (the precise value will be fixed later) and $\varepsilon_1=0$ for the first iteration.
The general case goes as follows.
If the seed $(r_n,\varepsilon_n)$ is given, where $|\varepsilon_n|<(p_2-p_1)/2$, then
$r_n', \widehat{r}_n, r_n^\star, R_n,M_n$ are
defined by the conditions (i)--(v), which guarantee $r_n<r_n'\le\widehat{r}_n<r_n^\star<1$. We proceed to show that
the system of equations
\begin{equation} \label{eq:system}
\left\{ \begin{aligned}
\frac{R_n \log\frac{r}{r_n} + M_n \int_{\widehat{r}_n}^{r_n^\star} \log\frac{r}{t} \, dt-p_1\frac{r-r_n'}{1-r_n'}}{\log\frac{C}{1-r}}
& = p_2-p_1 + \varepsilon,\\
\left( R_n + M_n (r_n^\star - \widehat{r}_n) -p_1\frac{r}{1-r_n'}\right)  \frac{1-r}{r}& = p_2-p_1 + \varepsilon,
\end{aligned}
\right.
\end{equation}
admits a~unique solution $(r,\varepsilon)$ for $r_n^\star<r<1$. Assuming for a~moment that such solution exists, then
choose $r_n''=r$ and $\varepsilon_{n+1}= \varepsilon$. By defining $r_{n+1}$ via~(viii), we have
obtained the seed $(r_{n+1}, \varepsilon_{n+1})$ for the next iteration.

We still need to prove that~\eqref{eq:system} admits a~unique solution $(r,\varepsilon)$ where $r\in (r_n^\star, 1)$.
By combining the equations, we eliminate $\varepsilon$ and obtain
\begin{align}
& \left( R_n + M_n (r_n^\star - \widehat{r}_n) -p_1\frac{r}{1-r_n'}\right)  \frac{1-r}{r} \log\frac{C}{1-r} \label{eq:combined_1} \\
& \qquad  = R_n \log\frac{r}{r_n} + M_n \int_{\widehat{r}_n}^{r_n^\star} \log\frac{r}{t} \, dt-p_1\frac{r-r_n'}{1-r_n'}. \label{eq:combined_2}
\end{align}
Consider the interval $[r_n^\star,1)$. Let $g_L=g_L(r)$ be the function in~\eqref{eq:combined_1} and let
$g_R=g_R(r)$ be the function in~\eqref{eq:combined_2}. By a straight-forward differentiation,
$$
g_R'(r)=\frac{R_n}{r}+\frac{M_n(r_n^\star-\widehat{r}_n)}{r}-\frac{p_1}{1-r_n'}\ge\frac{1}{r(1-r_n')}\left((p_2-p_1)\log C-p_1\right)>0,
$$
provided $C>e^{\frac{p_1}{p_2-p_1}}$, and hence the function $g_R$ is strictly increasing. Another differentiation gives
\begin{equation*}
\begin{split}
g_L'(r)
&=-\frac{p_1}{1-r_n'}\left(\frac{1-r}{r}\log\frac{C}{1-r}\right)\\
&\quad+\left(R_n+M_n(r_n^\star-\widehat{r}_n)-p_1\frac{r}{1-r_n'}\right)
\left(-\frac{\log\frac{C}{1-r}}{r^2}+\frac1r\right)\\
&<\left(\frac{R_n}{r}+\frac{M_n(r_n^\star-\widehat{r}_n)}{r}-\frac{p_1}{1-r_n'}\right)\left(1-\log C\right)<0
\end{split}
\end{equation*}
whenever $C>e^{\frac{p_2}{p_2-p_1}}$, and hence the function $g_L$ is strictly decreasing.
Therefore there exists at most one point $r\in [r_n^\star,1)$ for which $g_L(r)=g_R(r)$.

Let $a,b$ be positive constants such that $(\log C)^{1/2}> a > b$ (further restrictions apply later), 
and define $\{\alpha_n\}$ and $\{\beta_n\}$ by
\begin{equation*}
  1-\alpha_n
  = a \, \frac{1-\widehat{r}_n}{\left( \log\frac{C}{1-\widehat{r}_n} \right)^{1/2}}, \quad  1-\beta_n
  = b \, \frac{1-\widehat{r}_n}{\left( \log\frac{C}{1-\widehat{r}_n} \right)^2}.
\end{equation*}
Then $r_n^\star < \alpha_n < \beta_n < 1$.  Recall the inequalities $1-x < \log(1/x) < (1/x)(1-x)$,
valid for all $x\in (0,1)$. On one hand, we estimate
	\begin{align*}
	g_L(\alpha_n)
	& > \left(M_n (r_n^\star - \widehat{r}_n)-\frac{p_1}{1-r_n'}\right)\frac{1-\alpha_n}{\alpha_n} \log\frac{C}{1-\alpha_n} \\
	&>a\left(p_2-p_1-p_1\, \frac{1-\widehat{r}_n}{1-r_n'}\, \frac1{\log\frac{C}{1-\widehat{r}_n}}\right)
	\left(\log\frac{C}{1-\widehat{r}_n} \right)^{\frac12}\log\frac{C\left(\log\frac{C}{1-\widehat{r}_n}\right)^\frac12}{a(1-\widehat{r}_n)}\\
	& \geq a\left(p_2-p_1-\frac{p_1}{\log C}\right) \left(\log\frac{C}{1-\widehat{r}_n}\right)^{\frac12}
	\log\frac{C\left(\log\frac{C}{1-\widehat{r}_n}\right)^\frac12}{a(1-\widehat{r}_n)}\\
	& > a\left(p_2-p_1-\frac{p_1}{\log C}\right) \left(\log\frac{C}{1-\widehat{r}_n}\right)^{\frac32},
\end{align*}
and
\begin{align*}
g_R(\alpha_n) & < g_R(1)
 = R_n \log\frac{1}{r_n} + M_n \int_{\widehat{r}_n}^{r_n^\star} \log\frac{1}{t} \, dt-p_1\\
& \le p_2-p_1+\e_n+\frac{p_2-p_1}{r_1}\, \log\frac{C}{1-\widehat{r}_n}.
\end{align*}
Now that we have the factor $\left(\log\frac{C}{1-\widehat{r}_n}\right)^\frac32$ in the lower estimate for $g_L(\alpha_n)$, and the factor $\log\frac{C}{1-\widehat{r}_n}$ in the upper estimate for $g_L(\alpha_n)$, it is easy to see that $g_L(\alpha_n) > g_R(\alpha_n)$ if $a$ (and hence $C$ also) is sufficiently large.
On the other hand, we estimate
\begin{align*}
g_L(\beta_n) \leq \left( p_2 + \e_n + (p_2-p_1) \log\frac{C}{1-\widehat{r}_n} \right)  \frac{b \log\frac{C \left( \log\frac{C}{1-\widehat{r}_n} \right)^{2}}{b(1-\widehat{r}_n)}}{r_1 \left( \log\frac{C}{1-\widehat{r}_n} \right)^{2}},
\end{align*}
and
\begin{align*}
g_R(\beta_n) &> \frac{p_2-p_1}{1-\widehat{r}_n}\log\frac{C}{1-\widehat{r}_n}\left(\beta_n-r_n^\star\right)-p_1\\
&=(p_2-p_1)\log\frac{C}{1-\widehat{r}_n}\left(1-\frac1{\log\frac{C}{1-\widehat{r}_n}}-\frac{b}{\left(\log\frac{C}{1-\widehat{r}_n}\right)^2}\right)-p_1,
\end{align*}
and therefore $g_L(\beta_n) < g_R(\beta_n)$ assuming that $b>0$ is sufficiently small (This reasoning can be easily modified such that in the definitions of $\alpha_n$ and $\beta_n$, the powers $\frac12$ and $2$ are replaced by 1).
As both functions $g_L$ and $g_R$ are continuous, this ensures the existence of
a~unique value $r\in (\alpha_n,\beta_n)$ for which $g_L(r)=g_R(r)$.
This value is denoted by $r_n''$. Define $r_{n+1}$
by (viii) which guarantees $r_n\to 1^-$, as $n\to\infty$, since $\{\eta_n\}$ is unbounded.
Finally, define $\varepsilon_{n+1}$ by~(vi). The reasoning above then proves (vii).

We compute
\begin{align*}
1 \geq \frac{\log\frac{C}{1-\widehat{r}_n}}{\log\frac{C}{1-r_n''}} \geq \frac{\log\frac{C}{1-\widehat{r}_n}}{\log\frac{C}{1-\beta_n}}
=  \left( 1 + \frac{\log\Big( \frac{1}{b} \! \left( \log\frac{C}{1-\widehat{r}_n} \right)^2\Big)}{\log\frac{C}{1-\widehat{r}_n}} \right)^{-1},\quad n\in\N,
\end{align*}
which implies that: (I) for every $\delta>0$ there exists a~constant $C>0$ such that
\begin{equation*}
1 \geq \frac{\log\frac{C}{1-\widehat{r}_n}}{\log\frac{C}{1-r_n''}} \geq \frac{1}{1+\delta},
\end{equation*}
independently of $n$; (II) we also have
\begin{equation*}
\lim_{n\to\infty} \, \frac{\log\frac{C}{1-\widehat{r}_n}}{\log\frac{C}{1-r_n''}}  = 1
\end{equation*}
for any fixed $C$.
By (iii),(v) and (vi),
\begin{equation} \label{eq:alt_vvv}
\begin{split}
 \varepsilon_{n+1}
&  = p_1-p_2 + \frac{p_2+\varepsilon_n}{1-r_n} \, \frac{r_n\,  \log\frac{r_n''}{r_n}}{\log\frac{C}{1-r_n''}} \\
 &  \quad +  \frac{p_2-p_1}{(1-\widehat{r}_n)^2 \log\frac{C}{1-r_n''}} \left( \log\frac{C}{1-\widehat{r}_n} \right)^2 \int_{\widehat{r}_n}^{r_n^\star} \log\frac{r_n''}{t}\, dt
- \frac{p_1\frac{r_n''-r_n'}{1-r_n'}}{\log\frac{C}{1-r_n''}}.
\end{split}
\end{equation}
The right-hand side of~\eqref{eq:alt_vvv} contains four terms. The first term is the constant $p_1-p_2$. Since $|\varepsilon_n| \leq (p_2-p_1)/2$ by the assumption, the second term can be made arbitrarily close
to zero by choosing sufficiently large $C$. By choosing $C$ sufficiently large, the fourth term can be made
arbitrarily close to zero as well. To estimate the third
term, we compute
\begin{align*}
\frac{1}{\widehat{r}_n} & \geq \frac{1}{(1-\widehat{r}_n)^2 \log\frac{C}{1-r_n''}} \left( \log\frac{C}{1-\widehat{r}_n} \right)^2 \int_{\widehat{r}_n}^{r_n^\star} \log\frac{r_n''}{t}\, dt
 \geq \frac{r_n''-r_n^\star}{1-\widehat{r}_n} \, \frac{\log\frac{C}{1-\widehat{r}_n}}{\log\frac{C}{1-r_n''}},
\end{align*}
where
\begin{align*}
1 & \geq \frac{r_n''-r_n^\star}{1-\widehat{r}_n}  = \frac{1-r_n^\star}{1-\widehat{r}_n} - \frac{1- r_n''}{1-\widehat{r}_n}
\geq \frac{(1-\widehat{r}_n)\left( 1 - \left( \log\frac{C}{1-\widehat{r}_n} \right)^{-1} \right)}{1-\widehat{r}_n} - \frac{1- \alpha_n}{1-\widehat{r}_n}\\
 & = 1 - \frac{1}{\log\frac{C}{1-\widehat{r}_n}} - \frac{a}{\left( \log\frac{C}{1-\widehat{r}_n}\right)^{1/2}}.
\end{align*}
We conclude that the third term in the right-hand side of~\eqref{eq:alt_vvv} can be made arbitrarily close to the value $p_2-p_1$
by choosing $r_1\in (0,1)$ sufficiently close to $1$ and $C$ sufficiently large. This means that,
for any $\delta>0$, we obtain $|\varepsilon_{n+1}|< \delta$ by choosing $r_1\in (0,1)$ sufficiently close to $1$ and $C$ sufficiently large.
That said, we may assume $|\varepsilon_{n+1}|<(p_2-p_1)/2$. The same computation also shows that
 $\varepsilon_n\to 0$ as $n\to\infty$. Therefore (vi) ensures the asymptotic equality
$$
1-r_n''\sim\frac{1-\widehat{r}_n}{\log\frac1{1-\widehat{r}_n}},\quad n\to\infty,
$$
which gives a relatively precise location of $r_n''$ with respect to $\widehat{r}_n$.

(b)
Let $\varphi$ be the piecewise-defined function in the assertion. It is immediate that
$\varphi$ is continuous in each subinterval. At endpoints of subintervals, we conclude that
continuity at $r_n$ is clear; continuity at $r_n'$ follows from (i); continuity at $\widehat{r}_n$ is clear and the same happens with $r_n^\star$; and
continuity at $r_n''$ follows from (vi). By straight-forward differentiation,
\begin{equation*}
\varphi'(r) = \begin{cases}
\frac{p_2+\varepsilon_n}{1-r}, & r_{n-1}'' \leq r < r_n,\\
\frac{R_n}{r}, & r_n \leq r < r_n',\\
\frac{R_n}{r}+p_1\left(\frac1{1-r}-\frac1{1-r_n'}\right), & r_n' \leq r < \widehat{r}_n,\\
\frac{R_n}{r}+p_1\left(\frac1{1-r}-\frac1{1-r_n'}\right) + M_n \frac{r-\widehat{r}_n}{r}, & \widehat{r}_n\leq r < r_n^\star,\\
\frac{R_n}{r} +p_1\left(\frac1{1-r}-\frac1{1-r_n'}\right)+ M_n \frac{r_n^\star-\widehat{r}_n}{r} & r_n^\star \leq r < r_n''.\\
\end{cases}
\end{equation*}
Also the derivative $\varphi'$ is continuous in each subinterval. At endpoints of subintervals, we conclude that
continuity at $r_n$ follows by (iii); continuity at $r_n'$, $\widehat{r}_n$ and $r_n^\star$ is clear; and
continuity at $r_n''$ follows from (vii). Another straight-forward differentiation gives \eqref{eq:diff}.

We need to prove the properties in \eqref{eq:limits}. Note that
\begin{equation*}
\lim_{n\to\infty} \, \frac{\varphi(r_n)}{\log\frac{1}{1-r_n}} = p_2, \quad
\lim_{n\to\infty} \, \frac{\varphi(r_n')}{\log\frac{1}{1-r_n'}}
= \lim_{n\to\infty} \, \frac{p_1\log\frac{C}{1-r_n'} + \frac{p_2+\varepsilon_n}{1-r_n} \, r_n \log\frac{r_n'}{r_n}}{\log\frac{1}{1-r_n'}}  = p_1.
\end{equation*}
We only give details on cases where the upper and lower estimates are different.
The remaining computations are similar and hence omitted.
{\color{black}
For $r_{n-1}''\leq r <r_n$, we deduce the upper estimate
\begin{equation*}
\frac{\varphi(r)}{\log\frac{1}{1-r}} \leq (p_2+\varepsilon_n) \, \left( 1 + \frac{\log C}{\log\frac{1}{1-r_{n-1}''}} \right) \longrightarrow p_2, \quad n\to\infty,
\end{equation*}
and the lower estimate
\begin{equation*}
\frac{\varphi(r)}{\log\frac{1}{1-r}} \geq (p_2+\varepsilon_n) \, \left( 1 + \frac{\log C}{\log\frac{1}{1-r_n}} \right) \longrightarrow p_2, \quad n\to\infty.
\end{equation*}}

For $r_n \leq r < r_n'$, we have the upper estimate
\begin{align*}
\frac{\varphi(r)}{\log\frac{1}{1-r}}
& = (p_2+\varepsilon_n) \, \frac{\log\frac{C}{1-r_n}}{\log\frac{1}{1-r}} + \frac{p_2+\varepsilon_n}{1-r_n} \, \frac{r_n \log\frac{r}{r_n}}{\log\frac{1}{1-r}}\\
& \leq  (p_2+\varepsilon_n) \, \left( 1 + \frac{\log C}{\log\frac{1}{1-r_n}} \right) + \frac{p_2+\varepsilon_n}{\log\frac{1}{1-r_n}} \longrightarrow p_2, \quad n\to\infty,
\end{align*}
and, by (i), the lower estimate
\begin{align*}
\frac{\varphi(r)}{\log\frac{1}{1-r}}
& \geq  (p_2+\varepsilon_n) \, \frac{\log\frac{C}{1-r_n}}{\log\frac{1}{1-r_n'}}
=   \frac{p_1 (p_2+\varepsilon_n) \log\frac{C}{1-r_n} }{(p_2+\varepsilon_n) \log\frac{C}{1-r_n}- p_1\log C} \longrightarrow p_1, \quad n\to\infty.
\end{align*}

{\color{black}
For $r_n'\leq r <\widehat{r}_n$, we have the upper estimate
\begin{align*}
\frac{\varphi(r)}{\log\frac{1}{1-r}}
& = \frac{p_1 \log \frac{C}{1-r_n'}}{\log\frac{1}{1-r}} + \frac{p_2+\varepsilon_n}{1-r_n} \, \frac{r_n \log\frac{r}{r_n}}{\log\frac{1}{1-r}}
+p_1 \, \frac{\log\frac{1}{1-r} - \log\frac{1}{1-r_n'}- \frac{r-r_n'}{1-r_n'}}{\log\frac1{1-r}}\\
& = p_1 + \frac{\frac{p_2+\varepsilon_n}{1-r_n}\, r_n \, \log\frac{r}{r_n} - p_1 \, \frac{r-r_n'}{1-r_n'} + p_1 \log C}{\log\frac{1}{1-r}} \\
& \leq  p_1 + \frac{p_2+\varepsilon_n}{\log\frac{1}{1-r_n'}} + \frac{p_1\log C}{\log\frac{1}{1-r_n'}}
\longrightarrow p_1, \quad n\to\infty.
\end{align*}
The following lower estimate is immediate
\begin{equation*}
\frac{\varphi(r)}{\log\frac{1}{1-r}}
 \geq p_1\left(1+\frac{\log C}{\log\frac{1}{1-\widehat{r}_n}}\right) - p_1 \frac{\frac{\widehat{r}_n-r_n'}{1-r_n'}}{\log\frac1{1-r_n'}}
\longrightarrow p_1, \quad n\to\infty.
\end{equation*}

For $\widehat{r}_n\leq r <r_n^\star$, we have the upper estimate
\begin{align*}
\frac{\varphi(r)}{\log\frac{1}{1-r}}
& = p_1 \, \frac{\log\frac{C}{1-r}}{\log\frac{1}{1-r}} + \frac{p_2+\varepsilon_n}{1-r_n} \, \frac{r_n \log\frac{r}{r_n}}{\log\frac{1}{1-r}}
-\frac{p_1\frac{r-r_n'}{1-r_n'}}{\log\frac1{1-r}}\\
& \qquad + \frac{p_2-p_1}{(1-\widehat{r}_n)^2} \left( \log\frac{C}{1-\widehat{r}_n} \right)^2 \, \frac{\int_{\widehat{r}_n}^r \log\frac{r}{t}\, dt}{\log\frac{1}{1-r}}\\
& \leq  p_1  \left(1+\frac{\log C}{\log\frac{1}{1-\widehat{r}_n}}\right)+ \frac{p_2+\varepsilon_n}{\log\frac{1}{1-\widehat{r}_n}}
+ \frac{p_2-p_1}{\widehat{r}_n} \frac{1}{\log\frac{1}{1-\widehat{r}_n}} \longrightarrow  p_1, \quad n\to\infty,
\end{align*}
and the lower estimate
\begin{equation*}
\frac{\varphi(r)}{\log\frac{1}{1-r}}
 \geq  p_1  \left(1+\frac{\log C}{\log\frac{1}{1-r_n^\star}}\right)
-\frac{p_1}{\log\frac1{1-\widehat{r}_n}} \longrightarrow p_1, \quad n\to\infty.
\end{equation*}}

For $r_n^\star\leq r <r_n''$, we have, by (ii), the upper estimate
\begin{align*}
\frac{\varphi(r)}{\log\frac{1}{1-r}}
& = p_1 \, \frac{\log\frac{C}{1-r}}{\log\frac{1}{1-r}} + \frac{p_2+\varepsilon_n}{1-r_n} \, \frac{r_n \log\frac{r}{r_n}}{\log\frac{1}{1-r}}
-\frac{p_1\frac{r-r_n'}{1-r_n'}}{\log\frac1{1-r}}\\
& \qquad + \frac{p_2-p_1}{(1-\widehat{r}_n)^2} \left( \log\frac{C}{1-\widehat{r}_n} \right)^2 \, \frac{\int_{\widehat{r}_n}^{r_n^\star} \log\frac{r}{t}\, dt}{\log\frac{1}{1-r}}\\
&\leq p_1    \left(1+\frac{\log C}{\log\frac{1}{1-r_n^\star}}\right) + \frac{p_2+\varepsilon_n}{\log\frac{1}{1-r_n^\star}}
+ \frac{p_2-p_1}{\widehat{r}_n} \, \frac{\log\frac{1}{1-\widehat{r}_n}}{\log\frac{1}{1-r_n^\star}}
\longrightarrow  p_2, \quad n\to\infty,
\end{align*}
and the lower estimate
\begin{equation*}
\frac{\varphi(r)}{\log\frac{1}{1-r}}
  \geq p_1 -\frac{p_1}{\log\frac1{1-r_n^\star}}
\longrightarrow p_1, \quad n\to\infty.
\end{equation*}
This completes the proof of (b).
\end{proof}

\subsection{Approximation of subharmonic functions}

We will find an~analytic function $A$ in $\D$ such that $\log|A|$ approximates the radial function 
$\varphi$, constructed in Lemma~\ref{lemma:app}, with sufficient precision (logarithmic growth is approximated
at a $\log\log$ accuracy).
In order to do this we use \cite[Theorem~1]{ChyLyu}, stated as Theorem~\ref{t:3} below.
Some more notation is needed. We say that a~differentiable function $b:[0,1)\to(0,1)$ is of regular variation 
if $r+b(r)<1$ for all $0\le r<1$,
	\begin{equation*} % \label{eq:12}
	b(r+b(r))\asymp b(r),\quad 0\le r<1,
	\end{equation*}
and 
	\begin{equation*} % \label{e:integral_deriv_cond_b}
	\sup_{0<r<1}\int_{cb(r)}^{\frac 34(1-r)} \frac{\sup \{ |b'(\rho)|: \rho\in[0,1), \, |\rho-r|\le \tau \}}{\tau}\,d\tau<\infty
	\end{equation*}
for each $0<c<3/4$.
Since $b(r)<1-r$ by the definition, $b(r)\to 0^+$ as $r\to 1^-$ and $cb(r)< 3(1-r)/4$ for all $0<c<3/4$. 
Observe that $b$ is not required to be monotonic.
Standard computations show that the
decreasing functions 
$b_1(r)=(1-r)/C$ and $b_2(r)=(1-r)/(\log(C/(1-r)))$ for $C>1$ are of regular variation. These functions play
a~role in our application.

A polar rectangle is a set of the form $Q=\{z=re^{i\theta}:\,r_1<r<r_2,\, \theta_1<\theta<\theta_2\}\subset\D$. 
We write
$\ell_r(Q)=r_2-r_1$ and $\ell_\theta(Q)=\theta_2-\theta_1$.
We say that a~positive Borel measure  $\mu$ in $\mathbb{D}$ admits a regular partition, with respect to a~function $b$ of regular variation, if there exist a sequence $\{\ql\}$ of polar rectangles in $\D$ and a~decomposition $\mu=\sum\mu^{(l)}$ such that
\begin{itemize}
\item[(i)] $\supp \mu^{(l)} \subset \ql$ and $\mu^{(l)}(\ql)=2$ for all $l$;
\item[(ii)] there exists $N\in\N$ such that each $z\in\D$ belongs to at most $N$ different rectangles $\ql$;
\item[(iii)] there exists a~constant $c>0$ such that 
	$c^{-1} < \ell_\theta(\ql) /\ell_r(\ql) <c$ for all~$l$;
\item[(iv)] $\diam \ql \asymp b(\dist (\ql, 0))$ for all~$l$.
\end{itemize}
Further, $\mu$ is locally regular with respect to a function $b$ of regular variation if there exists $r_0\in(0,1)$ such that
	\begin{equation} \label{eq:stt}
	\sup_{r_0<|z|<1} \, \int_0^{b(|z|)}\frac{\mu(D(z,t))}t \, dt <\infty.
	\end{equation}
Note that (i) the behavior of $\mu$ in compact subsets of $\D$ is not relevant, since $b(r)\to 0^+$ as $r\to 1^-$; (ii) $\mu$ is not necessarily finite, as the hyperbolic measure $d\mu(z)=dm_2(z)/(1-|z|^2)^2$ is locally
regular with respect to $b_1$ (here and later $m_2$ is the two-dimensional Lebesgue measure); (iii) by Fubini's theorem, the condition~\eqref{eq:stt} is equivalent to
\begin{equation*}
\sup_{r_0<|z|<1} \, \int_{D(z,b(|z|))} \log\frac{b(|z|)}{|\zeta-z|}\, d\mu(\zeta) < \infty.
\end{equation*}
Finally, let $Z_f$ stand for the zero set of an analytic function $f$.

Recall that each subharmonic function $u: \D \to \R$ induces a~unique positive measure, namely the Riesz measure 
$\mu = \mu_u$, 
such that $d\mu =\frac1{2\pi} \Delta u\, dm_2$.

%%%%%%%%%%%%%%%%%%%%%%%%%%%
%%%% --- OLDTHEOREM ---- %%%%
%%%%%%%%%%%%%%%%%%%%%%%%%%%

\begin{oldtheorem}[\protect{\cite[Theorem~1]{ChyLyu}}] \label{t:3}
Let $u: \D \to \R$ be a subharmonic function
such that $\mu_u$ admits  a~regular partition and $\mu_u$ is locally regular,
with respect to some $b:[0,1)\to(0,1)$ of regular variation.
Then there
exists an analytic function $f$ in~$\D$ such that
	\begin{equation*} %\label{eq:02b}
	\sup_{z\in\D}\big( \log|f(z)| -u(z) \big)<\infty,
	\end{equation*}
and for each $\varepsilon>0$ there exist $r_1 = r_1(\varepsilon)\in(0,1)$ and $C=C(\varepsilon)>0$ such that
	\begin{equation*} %\label{eq02a}
	\big| \log|f(z)|-u(z) \big| \leq C, \quad r_1<|z|<1, \quad z\not\in E_\ve,
	\end{equation*}
where $E_{\ve}=\{ z\in \D: \dist (z, Z_f)\le \ve b(|z|)\}$. Moreover, the zero-set of $f$ satisfies
$Z_f\subset \bigcup_{\zeta\in \supp \mu_u} D(\zeta, K \, b(|\zeta|))$ for some $K>0$.
\end{oldtheorem}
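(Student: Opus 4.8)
The plan is to realize the Riesz measure $\mu_u$ approximately by a~discrete measure carried on the zero set of the sought analytic function $f$, and then to control the difference $\log|f|-u$ by potential-theoretic estimates that use the regular partition of $\mu_u$ and the local regularity hypothesis in an~essential way.

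First I would discretize $\mu_u$. Writing $\mu_u=\sum_l\mu^{(l)}$ as in the hypothesis, for each $l$ I replace the piece $\mu^{(l)}$, which has mass $2$ and is supported in the polar rectangle $\ql$, by an~atomic measure $\nu^{(l)}$ consisting of two unit point masses located inside $\ql$ and chosen so that $\nu^{(l)}$ and $\mu^{(l)}$ have the same zeroth and first moments; concretely one may take $\nu^{(l)}=2\delta_{w_l}$ with $w_l\in\ql$ the barycentre of $\mu^{(l)}$. Put $\nu=\sum_l\nu^{(l)}$. Expanding $\zeta\mapsto\log|z-\zeta|$ to second order about $w_l$ and using $\diam\ql\asymp b(\dist(\ql,0))\asymp b(|w_l|)$, one obtains
\[
\Bigl|\int\log|z-\zeta|\,d\mu^{(l)}(\zeta)-\int\log|z-\zeta|\,d\nu^{(l)}(\zeta)\Bigr|
\le C\min\Bigl\{1,\bigl(b(|w_l|)/|z-w_l|\bigr)^{2}\Bigr\},\qquad |z-w_l|\ge 2\diam\ql .
\]
The measure $\nu$ is to be the zero set of $f$, with $w_l$ counted with multiplicity $2$; since $w_l\in\ql$ and $\diam\ql\asymp b(|w_l|)$, the inclusion $Z_f\subset\bigcup_{\zeta\in\supp\mu_u}D(\zeta,Kb(|\zeta|))$ is then automatic.

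Next I would build $f$ as an~infinite product over $\nu$ (points repeated according to multiplicity) with elementary factors adapted to $\D$, namely Blaschke factors corrected by an~exponential of suitable, uniformly bounded degree so that the associated Green-type potentials converge, and then verify --- using the bounded-overlap property, the comparability of the angular sides and radial widths of the $\ql$, and the local regularity \eqref{eq:stt} --- that the product converges locally uniformly in $\D$, that $f\not\equiv0$, and that $\log|f|$ has Riesz measure exactly $\nu$. For the estimate, fix $z$ with $|z|$ close to $1$ and with $\dist(z,Z_f)>\ve b(|z|)$, i.e.\ $z\notin E_\ve$, and split $\log|f(z)|-u(z)$ according to the partition. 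Only $O(N)$ rectangles $\ql$ meet $D(z,Cb(|z|))$ by the bounded-overlap property, and the contribution of each such near rectangle is $O(1)$ by the bound displayed above --- here it is crucial that $z$ stays $\ve b(|z|)$-away from the points $w_l$, which is precisely the role played by $E_\ve$. The remaining far rectangles produce a~tail which, by the choice of the degrees of the elementary factors together with \eqref{eq:stt} and the regular-variation properties of $b$ (notably $b(r+b(r))\asymp b(r)$ and the oscillation bound in the definition of regular variation), sums to $O(1)$ uniformly in $z$ through a~Schur-type argument. Adding the two pieces gives $|\log|f(z)|-u(z)|\le C$ for $|z|>r_1$, $z\notin E_\ve$. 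The global one-sided bound $\sup_{\D}(\log|f|-u)<\infty$, which does not see the exceptional set, follows from the same local analysis and the sub-mean value property of $\log|f|$: passing into $E_\ve$ only makes the near factors of $\log|f|$ smaller, while $u$ stays locally bounded below because $\mu_u$ is non-atomic (as $u$ is real-valued) and satisfies \eqref{eq:stt}.

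The main obstacle is exactly the convergence of the product and the far-field part of the estimate: showing that the accumulated discretization error coming from the infinitely many distant rectangles stays uniformly bounded is where the full strength of the definition of a~function of regular variation and of the local regularity of $\mu_u$ has to be used. The discretization in the first step and the treatment of the finitely many near rectangles are, by comparison, routine bookkeeping.
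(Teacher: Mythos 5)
This statement is quoted from \cite[Theorem~1]{ChyLyu} and is not reproved in the present paper, so there is no internal proof to compare against; I can only assess your sketch on its own merits.

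Your plan --- discretize the Riesz measure along the regular partition, take the atoms as the prospective zero set, and control the resulting difference of potentials --- is the right strategy; it is how the cited theorem and its predecessors (Yulmukhametov in $\C$, Lyubarskii--Malinnikova in $\D$) are actually proved. But the sketch has two concrete gaps, and they are not incidental. First, placing the double atom $2\delta_{w_l}$ at the barycentre of $\mu^{(l)}$ kills only the first moment of $\mu^{(l)}-\nu^{(l)}$, so the atomization error is quadratic, as you display. Summed over the far rectangles --- in the model case $b(r)=1-r$, roughly $d/s$ rectangles of diameter $\asymp s$ at distance $\asymp d$ from $z$, for each dyadic $s\lesssim d$ and each dyadic $d\gtrsim b(|z|)$ --- this gives a contribution of order~$1$ per dyadic annulus and hence of order $\log\frac1{1-|z|}$ overall, which is exactly what the theorem must forbid. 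The standard cure is to place the two unit masses at \emph{distinct} points of $\ql$ so that the second moment is also killed (the Yulmukhametov trick), improving the decay, after which the counting must be redone for the actual kernel. That is the second gap: your Taylor-remainder estimate is phrased for the free-space kernel $\log|z-\zeta|$, whereas the $f$ you propose to build is a product of genus-one primary factors --- precisely the kernel that appears in~\eqref{eq:under} and is bounded via Tsuji's estimate in the proof of Lemma~\ref{lemma:approx} --- and the far-field decay and $\zeta$-derivatives of that kernel differ from those of $\log|z-\zeta|$, so the two estimates cannot be spliced as written. You rightly identify the far-field summation, which must use the full strength of the regular variation of $b$ and the local regularity~\eqref{eq:stt}, as the main obstacle, but you leave it to an unexecuted ``Schur-type argument.'' As it stands, the proposal is a plausible outline of the correct strategy, not a proof.
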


\begin{lemma}\label{lemma:approx}
Let $\varphi$ be as in~Lemma~\ref{lemma:app}.
Then there exists an analytic function $A$ in~$\D$ with the following properties:
\begin{enumerate}

\item[\rm (a)] for each $\varepsilon\in(0, 1/2)$ there exist $\rho_1=\rho_1(\varepsilon)\in(0,1)$ and a constant 
$C_2 = C_2(\varepsilon)>0$ such that
	$$
	\big|\log|A(z)|-\varphi(z)\big|<C_2+ C_2 \log \log \frac{1}{1-|z|} , \quad z\not \in E_\varepsilon,\quad \rho_1<|z|<1,
	$$
where $E_\varepsilon=\{z\in\D:\dist(z, Z_A)\le\varepsilon (1-|z|)\}$;

\item[\rm (b)] there exists a~constant $C_3>0$ such that 
$$ \big| \log M(r,A)-\varphi(r) \big| \leq C_3\log \log \frac{1}{1-r}, \quad \rho_1<r<1.$$
\end{enumerate}
Moreover, $Z_A\subset\bigcup_{\zeta\in \supp \mu} D(\zeta, (1-|\zeta|)/2)$ and
there exists a~constant $C_4>0$ such that
\begin{equation*}
m_1\big( E_\varepsilon \cap \partial D(0,r) \big) \leq C_4 \varepsilon, \quad r\in (1/2,1).
\end{equation*}
\end{lemma}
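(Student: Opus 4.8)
The plan is to apply Theorem~\ref{t:3} to the subharmonic function $u=\varphi$ constructed in Lemma~\ref{lemma:app}, with the regular-variation function $b=b_1$, i.e.\ $b(r)=(1-r)/C$. The first task is to verify the two structural hypotheses of Theorem~\ref{t:3} for the Riesz measure $\mu=\mu_\varphi$, whose density is $\tfrac1{2\pi}\Delta\varphi(re^{i\theta})=\tfrac1{2\pi r}(r\varphi'(r))'$ by the last sentence of Lemma~\ref{lemma:app}. Local regularity with respect to $b_1$ amounts, by the Fubini reformulation quoted after \eqref{eq:stt}, to a uniform bound $\sup\int_{D(z,b(|z|))}\log\frac{b(|z|)}{|\zeta-z|}\,d\mu(\zeta)<\infty$, and since $\mu$ is radial with density comparable to $(1-r)^{-2}$ on the pieces where it is supported (inspect \eqref{eq:diff}: the dominant term is always $\lesssim(1-r)^{-2}$, the extra $M_n/r$ contribution on $[\widehat r_n,r_n^\star]$ being of the same order $(1-r)^{-2}(\log\frac1{1-r})^2$ only on an interval of hyperbolic length $O(1/\log\frac1{1-r})$, hence harmless after the logarithmic averaging), this reduces to the standard estimate that the hyperbolic-type measure $dm_2(\zeta)/(1-|\zeta|)^2$ is locally regular with respect to $b_1$, already noted in the text. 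For the regular partition: decompose the annuli into polar rectangles $\ql$ of side-length comparable to $b_1(\dist(\ql,0))=(1-r)/C$ in both the radial and angular directions, chosen finely enough that $\mu^{(l)}(\ql)=2$; since the density is radial and $\asymp(1-r)^{-2}$, the number of rectangles needed in each annulus $\{1-2^{-n}\le|z|<1-2^{-n-1}\}$ is $\asymp 2^{2n}\cdot 2^{-n}=2^n$ angularly times $O(1)$ radially, and a routine construction (splitting each annulus radially into $O(C)$ sub-annuli of fixed hyperbolic width, then each sub-annulus angularly into equal arcs so that each resulting rectangle carries mass exactly $2$) yields (i)--(iv), with the overlap constant $N$ and the aspect-ratio constant $c$ absolute. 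The point where the density spikes, $[\widehat r_n,r_n^\star]$, is again handled by making the radial subdivision there finer by a factor $O((\log\frac1{1-r})^2)$, which does not disturb (iv) since that only constrains the \emph{diameter} of each $\ql$, and the finer rectangles there are still of diameter $\lesssim b_1$ — in fact the constraint $\diam\ql\asymp b(\dist(\ql,0))$ forces us to keep the \emph{radial} side $\asymp(1-r)/C$, so we must instead absorb the extra mass by shrinking the \emph{angular} side, which is permissible since (iii) only asks $\ell_\theta/\ell_r\asymp1$ — so on that short interval we tile with a few concentric layers rather than changing the aspect ratio; either way the construction is elementary. I expect this partition bookkeeping on the spike interval to be the only genuinely fiddly point.

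Granting the hypotheses, Theorem~\ref{t:3} produces an analytic $A$ with $\sup_{z\in\D}(\log|A(z)|-\varphi(z))<\infty$, with $|\log|A(z)|-\varphi(z)|\le C$ off the exceptional set $E'_\varepsilon=\{\dist(z,Z_A)\le\varepsilon b(|z|)\}$, and with $Z_A\subset\bigcup_{\zeta\in\supp\mu}D(\zeta,Kb(|\zeta|))$. Here $b(|\zeta|)=(1-|\zeta|)/C$, so by choosing $C$ large (depending on $K$ and on the fixed $\varepsilon$) we get $Kb(|\zeta|)\le(1-|\zeta|)/2$, giving the stated containment $Z_A\subset\bigcup_{\zeta}D(\zeta,(1-|\zeta|)/2)$; and the set $E_\varepsilon=\{\dist(z,Z_A)\le\varepsilon(1-|z|)\}$ in the statement is comparable to $E'_{\varepsilon C}$ up to adjusting constants, so the conclusion of Theorem~\ref{t:3} applies on it. This already yields a \emph{bounded} error $|\log|A|-\varphi|\le C_2$ off $E_\varepsilon$, which is stronger than the claimed $C_2+C_2\log\log\frac1{1-|z|}$; the weaker form in (a) is stated presumably because it suffices downstream and survives the passage to $M(r,A)$. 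So (a) follows immediately.

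For (b) I would argue as follows. Upper bound: for $r$ fixed with $\rho_1<r<1$, pick $z_0$ with $|z_0|=r$ and $|A(z_0)|=M(r,A)$; if $z_0\notin E_\varepsilon$ we get $\log M(r,A)\le\varphi(r)+C_2+C_2\log\log\frac1{1-r}$ directly, and if $z_0\in E_\varepsilon$ we move to a nearby point $z_1$ on the same circle with $z_1\notin E_\varepsilon$ — possible because $m_1(E_\varepsilon\cap\partial D(0,r))\le C_4\varepsilon<$ the full circumference once $\varepsilon$ is small (this is the last assertion of the lemma, which I address below) — and use a Harnack/Cauchy-type estimate to control $\log M(r,A)\le\log|A(z_1)|+O(1)$ after shrinking $r$ by a factor $(1-(1-r)\delta)$, noting $\varphi$ varies by only $O(\log\log\frac1{1-r})$ over such a hyperbolic-bounded move by \eqref{eq:diff}–\eqref{eq:limits}. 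Lower bound: $\log M(r,A)\ge\log|A(z)|\ge\varphi(z)-C_2-C_2\log\log\frac1{1-z}=\varphi(r)-C_2-C_2\log\log\frac1{1-r}$ for any $z$ on the circle outside $E_\varepsilon$, which exists by the measure bound. Finally, the claim $m_1(E_\varepsilon\cap\partial D(0,r))\le C_4\varepsilon$: since $Z_A\subset\bigcup_{\zeta\in\supp\mu}D(\zeta,(1-|\zeta|)/2)$ and the zeros inherit the local finiteness of $\mu$ — in each annulus of fixed hyperbolic width there are $O(1)$ zeros per rectangle and $\asymp 1/(1-r)$ rectangles around the circle — the set $E_\varepsilon\cap\partial D(0,r)$ is covered by $O(1/(1-r))$ arcs each of length $O(\varepsilon(1-r))$, giving total length $O(\varepsilon)$, uniformly in $r\in(1/2,1)$; the spike interval $[\widehat r_n,r_n^\star]$ again contributes only more zeros in a short range of radii, still $O(1/(1-r))$ per circle, so the bound is unaffected. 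This completes the plan.
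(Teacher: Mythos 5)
Your plan hinges on applying Theorem~\ref{t:3} once, to $u=\varphi$ with $b=b_1(r)=(1-r)/C$. This cannot work, for two related reasons concentrated on the spike intervals $[\widehat r_n,r_n^\star]$. There the density is $\Delta\varphi\asymp M_n\asymp(1-\widehat r_n)^{-2}\bigl(\log\frac{1}{1-\widehat r_n}\bigr)^2$ while the spike annulus $A_n^*$ has radial thickness $\asymp(1-\widehat r_n)/\log\frac{1}{1-\widehat r_n}$; for $|z|$ near $\widehat r_n$ this yields
\begin{equation*}
\int_0^{b_1(|z|)}\frac{\mu_\varphi(D(z,t))}{t}\,dt\gtrsim\frac{1}{C}\,\log\frac{1}{1-\widehat r_n}\longrightarrow\infty,
\end{equation*}
so $\mu_\varphi$ is \emph{not} locally regular with respect to $b_1$ and hypothesis~\eqref{eq:stt} fails; your assertion that the spike is ``harmless after the logarithmic averaging'' is exactly what breaks. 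Simultaneously, the regular-partition conditions (iii) and (iv) cannot both hold with $b=b_1$ on the spike: tiles carrying mass exactly $2$ with bounded aspect ratio are forced to have diameter $\asymp(1-r)/\log\frac{1}{1-r}$ there, not $\asymp(1-r)/C$. The two workarounds you sketch --- shrink only the angular side, or stack concentric layers of radial width $\asymp b_1$ --- violate (iii) and (iv) respectively. There is no single $b$ of regular variation that makes Theorem~\ref{t:3} applicable to all of $\varphi$.

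The paper gets around this by \emph{decomposing} $\varphi=u_1+u_2+\sum_{j=1}^4 u_j^*$, where $u_2$ is a genus-one potential carrying the spike Riesz mass, $u_1$ carries the mass on the remaining annuli, and the $u_j^*$ are potentials over the four families of leftover boundary tiles (one per annulus of each type), each of which is forced to carry mass in $[2,4)$ rather than exactly $2$. Theorem~\ref{t:3} is then applied \emph{twice}, to $u_1$ with $b_1$ and to $u_2$ with $b_2(r)=(1-r)/\log(C/(1-r))$, each verification being made at its own scale, and $A=A_1A_2$. The $\log\log\frac{1}{1-|z|}$ in the conclusion is precisely $\sum_j|u_j^*(z)|$, which is $\asymp$ the number of boundary tiles within $|z|<r$; since $1-r_{n+1}\lesssim(1-r_1)^{M^n}$ this count is $\asymp\log\log\frac{1}{1-r}$. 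It is a structural error term, not (as you suggest in~(a)) a gratuitous weakening of a bounded error --- a bounded error is out of reach exactly because the boundary tiles cannot all carry mass exactly~$2$. So your reading of what part~(a) should really yield is also off, which is a second, independent gap.
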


%%%%%%%%%%%%%%%%%%%%%%
%%%% ---- PROOF ---- %%%%
%%%%%%%%%%%%%%%%%%%%%% 

\begin{proof} 
Recall that $r_0''=0$ and  define $r_{n,0}=r''_{n-1}$ for all $n\in\N$.
Define the annuli as in~\eqref{eq:diff}:
\begin{align*}
A_n & = \big\{ z\in\D : r_{n-1}''< |z| < r_n \big\},\\
A_n' & =\big\{ z\in\D : r_n< |z| < r_n' \big\}, \\
\widehat{A}_n & = \big\{ z\in\D : r_n'< |z| < \widehat{r}_n \big\},\\
A_n^\star & = \big\{ z\in\D : \widehat{r}_n< |z| < r_n^\star \big\},\\
A_n'' & = \big\{ z\in\D : r_n^\star< |z| < r_n'' \big\},
\end{align*}
and note that the restriction of the Riesz measure $\frac 1{2\pi} \Delta \varphi $ of the function $\varphi$ on the boundary of any annulus is the zero measure.
Define inductively $r_{n,k}$, $k\in \{0, \dots, m_n\}$, where $m_n$ will be specified later.  
Given $r_{n,k}$ we choose the  numbers $0=\psi_{n,k}^0< \dots < \psi_{n,k}^{[\frac1{1-r_{n,k}}]}=2\pi$ 
such that $\psi_{n,k}^j-\psi_{n,k}^{j-1}=\frac{2\pi}{[\frac1{1-r_{n,k}}]}$,
where the square brackets denote
the integer part of a~real number.
In order to divide the annulus $ \{ \zeta: r_{n,k}<|\zeta|<1\}$ into the polar rectangles, define
\begin{equation*}
Q_{n,k}^j= \big\{ \zeta: r_{n,k}<|\zeta|<r_{n,k+1}, \, \, \psi_{n,k}^{j-1}\le \arg \zeta <\psi_{n,k}^{j} \big\},
\quad j=1,\dotsc, \left[ \frac{1}{1-r_{n,k}} \right],
\end{equation*}
 where  $r_{n,k+1}$ is chosen  such that $\mu_\varphi(Q_{n,k}^j)=2$ for all $j$, that is,
\begin{equation*}
\begin{split}
2&=\frac 1{2\pi} \int_{\psi_{n,k}^{j-1}}^{\psi_{n,k}^{j}} \int_{r_{n,k}}^{r_{n,k+1}} \frac{p_2+\varepsilon_{n-1}}{r(1-r)^2}
  \, r dr d\theta
 =\frac{p_2+\varepsilon_{n-1}}{[\frac{1}{1-r_{n,k}}]} \int_{r_{n,k}}^{r_{n,k+1}} \frac{dr}{(1-r)^2}\\
&=\frac{(r_{n,k+1}-r_{n,k})(p_2+\varepsilon_{n-1})}{[\frac{1}{1-r_{n,k}}](1-r_{n,k})(1-r_{n,k+1})},
\quad j=1,\dotsc, \left[ \frac{1}{1-r_{n,k}} \right].
\end{split}
\end{equation*}
Such $r_{n,k+1}$ exists and is unique, because $(x-r_{n,k})/(1-x)$ increases to infinity as $x\to 1^-$.
In particular,
$$ r_{n,k+1}-r_{n,k}\sim \frac{2}{p_2}(1-r_{n, k+1}), \quad n\to\infty.$$

We continue the process until 
the interval $ (r_{n,m}, r_{n,m+1}]$ contains $r_{n}$, that is, when either $r_{n,m+1}=r_n$ or $r_{n,m+1}>r_n'$, because
the Laplacian of $\varphi$ vanishes on~$A_n'$. Set $m_n=m$.
We redefine $Q_{n,m_n-1}^j$ as
 $$Q_{n,m_n-1}^j= \big\{ \zeta: r_{n,m_n-1}<|\zeta|<r_{n}, \, \, \widetilde \psi_{n,m_n-1}^{j-1}\le \arg \zeta <\widetilde \psi_{n,m_n-1}^{j} \big\},$$
where $\widetilde\psi_{n,m_n-1}$ is defined in the following way. We set $\widetilde\psi_{n, m_n-1}^0=0$
and define $\widetilde\psi_{n, m_n-1}^j$ for $1\le j \le s_n-1$ by the equation
$\mu_\varphi(Q_{n,m_n-1}^j)=2$, where the value $s_n$ is uniquely defined by 
the condition $s_n=\min \{ j: \widetilde\psi_{n, m_n-1}^j > 2\pi\}$.
Redefine $Q_{n,m_n-1}^{s_n-1}$ so that the collection 
\begin{equation*}
\left( \bigcup_{k=0}^{m_n-2}\bigcup_{j=1}^{[ 1/(1-r_{n,k}) ]} \overline{Q_{n,k}^j} \right) \cup \bigcup_{j=1}^{s_n-1} \overline{Q_{n,m_n-1}^j} 
\end{equation*}
covers the whole annulus $A_n$. Let $Q_{n1}$ denote this modified rectangle $Q_{n,m_n-1}^{s_n-1}$ and note that 
its side lengths are comparable to $1-r_n$ while $2\leq \mu_\varphi(Q_{n1}) < 4$. 

Since on $\widehat A_n$ and $A_n''$ we have
	$$
	\Delta\varphi(re^{i\theta})
	=\frac{p_1}{r} \left(\frac{1}{(1-r)^2}-\frac 1{1-r_n'}\right)
	\sim\frac{p_1}{(1-r)^2}, \quad  n\to \infty,
	$$
 we can similarly define partitions 
\begin{align*}
  \{ \widehat Q_{n,k}^j \}, & \! & 0\le k\le \widehat{m}_n-1, &  & 1\le j\le \widehat{s}_n-1,\\
 \{  {Q''}_{n,k}^j \}, & & 0\le k\le {m}_n''-1, & & 1\le j\le {s}_n''-1,
\end{align*}
of the Riesz measures of the restrictions $\varphi\bigr|_{\widehat A_n}$ and $\varphi\bigr|_{A''_n}$, respectively. Then
\begin{enumerate}
\item[\rm (i)]
$\mu_\varphi(\widehat Q_{n,k}^j)=2$
for all $0\le k\le \widehat{m}_n-2$ and $1\le j\le [ 1/(1-\widehat r_{n,k}) ]$, and $\mu_\varphi(\widehat Q_{n,\widehat m-1}^j)=2$
for all $1\le j\le \widehat{s}_n-2$;
\item[\rm (ii)]
$\widehat Q_{n,\widehat{m}_n-1}^{\widehat{s}_n-1}$ is modified, and denoted by $Q_{n2}$, 
such that 
\begin{equation*}
\left( \bigcup_{k=0}^{\widehat{m}_n-2}\bigcup_{j=1}^{[ 1/(1-\widehat{r}_{n,k}) ]} \overline{\widehat Q_{n,k}^j} \right) \cup \bigcup_{j=1}^{\widehat{s}_n-1} \overline{\widehat Q_{n,\widehat m_n-1}^j} 
\end{equation*}
covers the whole annulus $\widehat A_n$, which implies $2 \leq \mu_\varphi(Q_{n2}) < 4$;
\item[\rm (iii)]
$\mu_\varphi({Q''}_{n,k}^j)=2$
for all $0\le k\le m''_n-2$ and $1\le j\le [ 1/(1-r''_{n,k}) ]$, and $\mu_\varphi({Q''}_{n,m''_n-1}^j)=2$
for all $1\le j\le s''_n-2$;
\item[\rm (iv)]
${Q''}_{n,m''_n-1}^{s''_n-1}$ is modified, and denoted by $Q_{n3}$, 
such that 
\begin{equation*}
\left( \bigcup_{k=0}^{m''_n-2}\bigcup_{j=1}^{[ 1/(1-r''_{n,k}) ]} \overline{{Q''}_{n,k}^j} \right) \cup \bigcup_{j=1}^{s''_n-1} 
\overline{{Q''}_{n,m''_n-1}^j} 
\end{equation*}
covers the whole annulus $A''_n$, which implies $2 \leq \mu_\varphi(Q_{n3}) < 4$.
\end{enumerate}
By construction, $ Q_{n2}$ and ${Q}_{n3}$ are polar rectangles with side lengths comparable to $1-\widehat r_n$ and $1-r_n''$, respectively. 

Then consider the annulus  $ A_n^*$. We choose
$\psi _n^{*0}=0$, and $\psi_{n}^{*j}$ such that  for 
$Q_n^{*j}= \{ \zeta: \widehat r_{n}<|\zeta|<r_n^*, \widetilde\psi_{n}^{j-1}\le \arg \zeta <\widetilde\psi_{n}^{j} \}$, we have
\begin{equation*}
\begin{split}
 2&=\frac1{2\pi} \int_{Q_n^{*j}} \Delta \varphi(\zeta) \, dm_2(\zeta)
	=\frac{\psi_{n}^{*j}-\psi_{n}^{*j-1}}{2\pi} \int_{\widehat r_n}^{r_n^*} \Delta \varphi(r) r\, dr\\
	&=(1+o(1)) \frac{\psi_{n}^{*j}-\psi_{n}^{*j-1}}{2\pi} \int_{\widehat r_n}^{r_n^*} \frac{p_2-p_1}{(1-\widehat r_n)^2}
	\log^2\frac C{1-\widehat r_n}\,dr\\
	&=(1+o(1))\frac{\psi_{n}^{*j}-\psi_{n}^{*j-1}}{2\pi}\frac{(p_2-p_1)\log\frac C{1-\widehat r_n}}{1- \widehat r_n} , \quad n\to \infty.
\end{split}
\end{equation*}
The value $s_n^*$ is uniquely defined by the condition $s_n^*=\min \{ j: {\psi^*}_{n, m-1}^j > 2\pi\}$.
Then  we obtain  a partition $\{  Q_{n}^{*j} \}$,  $1\le j\le {s}^*_n-1$,  of the Riesz measure of $\varphi\bigr|_{A_n^*}$ such that
\begin{enumerate}
\item[\rm (i)]
$\mu_\varphi( Q_{n}^{*j})=2$ for all $1\leq j \leq {s}^*_n-2$;
\item[\rm (ii)]
$Q_{n}^{*({s}^*_n-1)}$ is modified, and denoted by $Q_{n4}$, such that
$\bigcup_{j=0}^{s_n^*-1} \overline{ Q_{n}^{*j}}$  
covers the whole annulus $A_n^*$, which implies $2\leq \mu_\varphi(Q_{n4})<4$.
\end{enumerate}
By construction, $ Q_{n4}$ is a~polar rectangle with side lengths comparable to $\frac{1-\widehat r_n}{\ln \frac1{1-\widehat r_n}}$.

 Let $d\nu=\Delta \varphi \, dm_2\bigr|_{\bigcup_{n=1}^\infty (A_n^*\setminus Q_{n4})}$. We define
 \begin{align}%
   u_2(z) & =\int _{\D} \Bl \log \left| \frac{\overline{\zeta}(\zeta-z)}{1-\overline\zeta z}\right|+{\rm Re}\, \frac{1-|\zeta|^2}{1-\overline\zeta z} \Br \, d\nu(\zeta), \label{eq:under}\\
   u_1(z)& =\varphi(z) -u_2(z) -\sum_{j=1}^4 u_j^*(z), \notag
 \end{align}
 where
 \begin{equation}\label{e:u_star_def}
 u_j^*(z)=\sum_{n=1}^\infty \int_{Q_{nj}} \log \Bigl| \frac{z-\zeta}{1-z\overline\zeta}\Bigr| \Delta \varphi(\zeta)\, dm_2(\zeta).
 \end{equation}
The fact that these integrals converge depends on $\{ \eta_n\}$ in Lemma~\ref{lemma:app}.
Careful inspection shows that the condition $\eta_n\to\infty$, as $n\to\infty$, is sufficient for our purposes.

Although the subharmonicity of $u_2$ follows by standard arguments \cite{Chy_israel,HK},
we discuss it in detail for the convenience of the reader. More precisely, we show that $u_2$ is well-defined,
it is subharmonic in $\D$ and its Riesz measure coincides with $\nu/(2\pi)$.

First observe that the integrand in \eqref{eq:under}, as the logarithm of modulus of the primary factor of genus 1, admits the estimate  
\begin{align*}
\left|\log \left| \frac{\overline{\zeta}(\zeta-z)}{1-\overline\zeta z}\right|+{\rm Re}\, \frac{1-|\zeta|^2}{1-\overline\zeta z} \right| \le 2  \frac{(1-|\zeta|^2)^2}{|1-\overline\zeta z|^2}, \quad \frac{1-|\zeta|^2}{|1-\overline \zeta z|}<\frac 12,
\end{align*}
by \cite[Chap.V.10, p.223]{Tsuji}. We have $$
\nu(A_n^*) \asymp \int_{\widehat{r}_n}^{r_n^*} \frac{1}{(1-\widehat{r}_n)^2}  \log ^2  \frac  {C}{1-\widehat{r}_n} \, dr\asymp  \frac{1}{1-\widehat{r}_n}  \log   \frac  {C}{1-\widehat{r}_n}, \quad n\to \infty.$$
By Lemma~\ref{lemma:app}(viii) we get $1-r_{n+1} = (1-r_n'')/\eta_n \leq (1-r_n)/\eta_n$,
and hence
$1- \widehat r_n \geq 1 - r_{n+1} \geq \eta_{n+1}(1-r_{n+2}) \geq \eta_{n+1}(1-\widehat r_{n+2}) \geq \eta_1 (1-\widehat r_{n+2})$. 
We obtain
\begin{align*}
|u_2(z)| \lesssim \int_{\D} (1-|\zeta|)^2 \, d\nu(\zeta)
\lesssim \sum_{n=1}^\infty (1-\widehat r_n) \log \frac{C}{1-\widehat{r}_n} < \infty
\end{align*}
uniformly in  $z\in F$, where $F$ is a compact subset of $\D$. 

To prove that $u_2$ is subharmonic in $\D$, it is sufficient to verify it locally.
First, we need an upper estimate of $\nu$ on compact subsets of $\D$.
For any $r_1^*\leq r<1$, there exists $N\in\N\setminus\{1\}$ such that $r_{N-1}^*\leq r < r_N^*$.
Since $(1-\widehat {r}_{n+1})/(1-\widehat {r}_n)\to 0$ as $n\to \infty$, we deduce 
\begin{align*}
\nu\big( D(0,r) \big) & =\sum_{n=1}^{N-1} \nu(A_n^*)+ \nu(D(0,r)\cap A_N^*)\\
& \lesssim  \sum_{n=1}^{N-1}  \frac{1}{1-\widehat{r}_n}  \log   \frac  {C}{1-\widehat{r}_n} +\nu(D(0,r)\cap A_N^*)\\
& \lesssim     \frac{1}{1-\widehat{r}_{N-1}}   \log   \frac  {C}{1-\widehat{r}_{N-1}} +  \frac{1}{1-r}   \log   \frac  {C}{1-{r}}\\
& \lesssim      \frac{1}{1-r}   \log   \frac  {C}{1-{r}}.
\end{align*}
Second, let $K$ be an arbitrary~compact subset of $\D$. Note that
\begin{align*}
u_2(z) = \int _{K} \log|\zeta - z | \, \nu(\zeta)+ \int_K h(z,\zeta) \, \nu(\zeta)
+ \int _{\D\setminus K} \big( \log|\zeta - z| + h(z,\zeta) \big) \, d\nu(\zeta),
\end{align*}
for any $z\in\D$,
where $h(z, \zeta)$ is harmonic in $z$ throughout the unit disc.
In~$K$, the function $u_2$ can be represented as a~sum of 
the logarithmic potential of the finite measure $\nu\bigr|_K$, which is subharmonic by \cite[Theorem 3.1.2]{Ransford},
 and a harmonic function. Therefore $u_2$ is subharmonic in $K$, and consequently in the whole unit disc. By \cite[Theorem 3.7.4]{Ransford} the Riesz measure of~$u_2$ coincides with $\nu/(2\pi)$.

Similar arguments show that functions $u_j^*$ are subharmonic in $\D$ with the Riesz measures 
\begin{equation*}
d\mu_j^*=\frac 1{2\pi} \Delta \varphi \, dm_2 \bigr|_{\bigcup_{n=1}^\infty  Q_{nj}}, \quad j\in \{1,2,3,4\},
\end{equation*}
respectively.
The convergence of the integrals
in~\eqref{e:u_star_def}
follow from the convergence of the sums
\begin{equation*}
\sum_{n=1}^\infty (1-r_n), \quad \sum_{n=1}^\infty (1-\widehat r_n), \quad \sum_{n=1}^\infty (1-r_n'').
\end{equation*}
It then follows from the definition of $u_1$ that 
$\Delta u_1=\Delta \varphi -\Delta u_2 -\sum_{j=1}^4 \Delta u_j^*$, so that 
$\supp \Delta u_1 $ is contained in the closure of the set 
$$B_1=\bigcup_{n=1}^\infty \Big(A_n \cup (\widehat{A}_n \setminus Q_{n1}) \cup ({A}_n'' \setminus Q_{n3}) \Big),$$ 
where the equality $\Delta u_1=\Delta \varphi $ holds. 
We conclude that the Riesz measure of $u_1$ is $\Delta \varphi\,  dm_2 /(2\pi)\bigr|_{\overline B_1}$.

We proceed to approximate the subharmonic functions $u_1$ and $u_2$  using Theorem~\ref{t:3}, and show that 
$|u_1^*(z)|$, $|u_2^*(z)|$, $|u_3^*(z)|$ and $|u_4^*(z)|$ are uniformly small.
We estimate  $u_1^*$ and omit the considerations of $u_2^*$ and $u_3^*$, which are similar. 
We consider three cases. First, let $|\varphi_\zeta(z)|=|(\zeta-z)/(1-\overline\zeta z )|\le 1/2$, 
that is, $\zeta$ belongs to the pseudo-hyperbolic disc $\ph(z, 1/2)$. 
Then
\begin{align*}
u_1^*(z) & = \sum_{n=1}^\infty \int_{Q_{n1}} \log | \varphi_z(\zeta)| \, \Delta \varphi(\zeta)\, dm_2(\zeta)\\
& = \sum_n \int_{Q_{n1} \cap \ph(z,1/2)} \log | \varphi_z(\zeta)| \, \Delta \varphi(\zeta)\, dm_2(\zeta) \\
& \qquad + \sum_{n : \, r_n \leq |z|} \int_{Q_{n1} \setminus \ph(z,1/2)} \log | \varphi_z(\zeta)| \, \Delta \varphi(\zeta)\, dm_2(\zeta)\\
& \qquad + \sum_{n : \, r_n > |z|} \int_{Q_{n1} \setminus \ph(z,1/2)} \log | \varphi_z(\zeta)| \, \Delta \varphi(\zeta)\, dm_2(\zeta) \\
& = S_1(z) + S_2(z) + S_3(z), \quad z\in\D.
\end{align*}
We begin with $S_1$. Since
\begin{align*}
& \left | \int_{Q_{n1} \cap \ph(z,1/2)} \log | \varphi_z(\zeta)| \, \Delta \varphi(\zeta)\, dm_2(\zeta) \right|\\
& \qquad \lesssim \frac{1}{(1-|z|)^2} \int_{\ph(z,1/2)} \log\frac{1}{| \varphi_z(\zeta)|} \, dm_2(\zeta) \\
& \qquad \lesssim \int_{D(0,1/2)} \left(\log\frac{1}{|w|} \right) \frac{1}{|1-\overline{w} z|^4} \, dm_2(w) 
< \infty, \quad z\in\D,
\end{align*}
uniformly for all $n\in\N$,
we deduce
\begin{equation*}
|S_1(z)| \lesssim \# \big\{ n \in\N  : Q_{n1} \cap \ph(z,1/2) \neq \emptyset\big\} < \infty, \quad z\in\D.
\end{equation*}
To estimate $S_2$,
suppose that $|\varphi_z(\zeta)|\ge 1/2$. Then $|\log |\varphi_z(\zeta)| |\le\log 2$ and
	\begin{gather*}%\nonumber
  \left| \, \int\limits_{Q_{n1}\setminus \ph(z, 1/2)}\frac{p_2+\varepsilon_n}{2\pi}\log |\varphi_z(\zeta)| \frac{dm_2(\zeta)}{(1-|\zeta|)^2}\right|			\lesssim\frac{m_2(Q_{n1})}{(1-r_{n})^2} \lesssim 1, \quad n\in\N.
	\end{gather*}
Since
\begin{equation*}
1-r_{n+1} < 1 - r_n' = C \left( \frac{1-r_n}{C} \right)^{(p_2+\varepsilon_n)/p_1} \leq (1-r_n)^{(p_2+\varepsilon_n)/p_1} \leq (1-r_n)^M
\quad n\in\N,
\end{equation*}
for some constant $M>1$
independent of $n$, we conclude $1-r_{n+1} \lesssim (1-r_1)^{M^n}$ for all $n\in\N$.
Therefore
	\begin{equation*} %\label{e:r_n_count}
 |S_2(z)| \lesssim \sum_{n : \, r_n \leq |z|} 1 = n(|z|,\{r_n\}) \lesssim \log \log \frac{1}{1-|z|}, \quad |z|\to 1^-.
	\end{equation*}
Finally, we estimate $S_3$.
Assume that $|\varphi_z(\zeta)|\ge 1/2$. Since
	\begin{equation*}
	\begin{split}
	0&\le \frac{1}{2} \log \frac{1}{|\varphi_z(\zeta)|^2}
        \leq \frac{1}{2\,  |\varphi_z(\zeta)|^2} \left( 1 - |\varphi_z(\zeta)|^2 \right)
  \le2\, \frac{(1-|z|^2)(1-|\zeta|^2)}{|1-z\overline\zeta|^2}\le 4\, \frac {1-|\zeta|^2}{1-|z|},
	\end{split}
	\end{equation*}
we have
	\begin{gather*}%\label{e:u_star_far1}
	\left|\, \int\limits_{Q_{n1}\setminus \ph(z, 1/2)}\frac{p_2+\varepsilon_n}{2\pi}\log |\varphi_z(\zeta)| \frac{dm_2(\zeta)}{(1-|\zeta|)^2}\right|
	\lesssim\frac{1-r_{n}}{1-|z|}, \quad n\in\N, \quad z\in\D,
	\end{gather*}
and therefore
\begin{equation*}
|S_3(z)| \leq \sum_{n: \, r_n> |z|} \frac{1-r_n}{1-|z|}, \quad z\in\D.
\end{equation*}
If $|z|<1/2$, then $S_3(z) \leq 2 \sum_{n=1}^\infty (1-r_n) < \infty$ for all $z\in\D$. If $1/2\leq |z| < 1$, then
\begin{align*}
|S_3(z)| & \leq 1 + (1-|z|)^{M-1} + (1-|z|)^{M^2-1} + \dotsb = 2 \sum_{k=0}^\infty {\left( \frac{1}{2} \right)}^{M^k} < \infty,
\quad z\in\D.
\end{align*}
In conclusion, we have proved
	\begin{equation}\label{e:u1star_est}
	\begin{split}
	|u_1^*(z)|
\lesssim \log \log \frac 1{1-|z|}, \quad |z|\to 1^-.
	\end{split}
	\end{equation}
Similar arguments show
	\begin{equation}\label{e:u_23_star}
	|u_2^*(z)|+ |u_3^*(z)| \lesssim \log \log \frac 1{1-|z|}, \quad |z|\to 1^-.
	\end{equation}

We then consider $u_4$. Now
\begin{align*}
u_4^*(z) & = \sum_{n=1}^\infty \int_{Q_{n4}} \log | \varphi_z(\zeta)| \, \Delta \varphi(\zeta)\, dm_2(\zeta)\\
& = \sum_n \int_{Q_{n4} \cap \ph(z,1/2)} \log | \varphi_z(\zeta)| \, \Delta \varphi(\zeta)\, dm_2(\zeta) \\
& \qquad + \sum_{n : \, \widehat r_n \leq |z|} \int_{Q_{n4} \setminus \ph(z,1/2)} \log | \varphi_z(\zeta)| \, \Delta \varphi(\zeta)\, dm_2(\zeta)\\
& \qquad + \sum_{n : \, \widehat r_n > |z|} \int_{Q_{n4} \setminus \ph(z,1/2)} \log | \varphi_z(\zeta)| \, \Delta \varphi(\zeta)\, dm_2(\zeta) \\
& = T_1(z) + T_2(z) + T_3(z), \quad z\in\D,
\end{align*}
where
$$\Delta \varphi(\zeta) \asymp \frac{\log^2 \frac{C}{1-\widehat{r}_n}}{(1-\widehat r_n)^2}, \quad \zeta \in A_n^*, \quad n\to \infty. $$
We begin with $T_1$. There exists a~constant $c>0$ such that	
\begin{equation*}
	\begin{split}
	&\left|\, \int\limits _{Q_{n4}\cap \ph(z, 1/2)} \log |\varphi_\zeta(z)| \, \frac{\log^2 \frac{C}{1-|\zeta|} 
\, dm_2(\zeta)}{(1-|\zeta|)^2} \right|\\
  &\qquad\lesssim\frac{\log^2 \frac{C}{1-|z|}}{(1-|z|)^2}
	\int\limits_{Q_{n4}\cap \ph(z, 1/2)} \log  \left| \frac{1-\overline{\zeta} z}{\zeta-z} \right| \, dm_2(\zeta)\\
  &\qquad\lesssim\frac{\log^2 \frac{C}{1-|z|}}{(1-|z|)^2}
	\int\limits_{D\left( z,\, c (1-|z|)/(\log \frac{C}{1-|z|}) \right)}
	\log  \left| \frac{1-\overline{\zeta} z}{\zeta-z} \right| dm_2(\zeta)\\
&\qquad\lesssim\frac{\log^2 \frac{C}{1-|z|}}{(1-|z|)^2}
	\int\limits_{D\left( 0,\, c (1-|z|)/(\log \frac{C}{1-|z|}) \right)}
	\log  \left| \frac{1-\overline{(z+w)} z}{w} \right| dm_2(w).
\end{split}
\end{equation*}
Since there exists a~constant $\kappa>1$ such that $|1-\overline{(z+w)} z| \leq \kappa( 1-|z|)$, we conclude
\begin{equation*}
	\begin{split}
	& \left|\, \int\limits _{Q_{n4}\cap \ph(z, 1/2)} \log |\varphi_\zeta(z)| \, \frac{\log^2 \frac{C}{1-|\zeta|} 
\, dm_2(\zeta)}{(1-|\zeta|)^2} \right| \\
%  &\qquad 
& \qquad \lesssim \frac{\log^2 \frac{C}{1-|z|} }{(1-|z|)^2}
	\int\limits_0^{c \frac{1-|z|}{\log \frac{C}{1-|z|}}} \log \frac{\kappa(1-|z|)}{\tau} \, \tau\, d\tau\\
  & \qquad \lesssim\frac{\log^2 \frac{C}{1-|z|} }{(1-|z|)^2}\left(\frac12 \tau^2 \log \frac{\kappa(1-|z|)}{\tau} + \frac{\tau^2}{4}\right) \Bigr|_0^{c \frac{1-|z|}{\log \frac{C}{1-|z|}}}\\
		& \qquad \lesssim\log \log \frac{C}{1-|z|}, \quad |z|\to 1^-,
  \end{split}
	\end{equation*}
uniformly for all $n\in\N$.
We deduce
\begin{equation*}
\begin{split}
|T_1(z)| & \lesssim \# \big\{ n \in\N  : Q_{n4} \cap \ph(z,1/2) \neq \emptyset\big\} \, \log \log \frac{C}{1-|z|} \\
& \lesssim\log \log \frac{C}{1-|z|} , \quad |z|\to 1^-.
\end{split}
\end{equation*}
To estimate $T_2$, suppose that
$|\varphi_\zeta(z)|\ge 1/2$. Then
	\begin{align*}\nonumber
  & \left| \, \int\limits _{Q_{n4}\setminus \ph(z, 1/2)} \log \, \Bigl| \frac{z-\zeta}{1-z\overline\zeta}\Bigr| \, \Delta \varphi(\zeta)\, dm_2(\zeta) \right|  \\ 
& \qquad \lesssim  \int\limits _{Q_{n4}\setminus \ph(z, 1/2)} \big|\log |\varphi_\zeta(z)| \big| \, \frac{\log^2 \frac{C}{1-\widehat{r}_n} \, dm_2(\zeta)}{(1-\widehat r_n)^2} \\
& \qquad  \lesssim    \frac{\log^2 \frac{C}{1-\widehat{r}_n} m_2(Q_{n4})}{(1-\widehat r_{n})^2} \lesssim 1,\quad n\in\N.
\end{align*}
Therefore, as in the case of $S_2$, we deduce
	\begin{equation*} %\label{e:r_n_count}
 |T_2(z)| \lesssim \sum_{n : \, \widehat r_n \leq |z|} 1 = n(|z|,\{\widehat r_n\}) \lesssim \log \log \frac{1}{1-|z|}, \quad |z|\to 1^-.
	\end{equation*}
Finally, we estimate $T_3$. Assume that $|\varphi_\zeta(z)|\ge 1/2$. Now
\begin{align*}
  \left| \, \int\limits _{Q_{n4}\setminus \ph(z, 1/2)} \log \ \Bigl| \frac{z-\zeta}{1-z\overline\zeta}\Bigr| \, \Delta \varphi(\zeta)\, dm_2(\zeta) \right|   \lesssim \frac{1-\widehat r_{n}}{1-|z|}, \quad  n\in\N.
\end{align*}
Therefore,
\begin{equation*}
|T_3(z)| \leq \sum_{n: \, \widehat r_n> |z|} \frac{1-\widehat r_n}{1-|z|} \lesssim 1, \quad z\in\D.
\end{equation*}
In conclusion, \begin{equation}%\label{e:u4_star}
         |u_4^*(z)| \lesssim  \log \log \frac1{1-|z|}, \quad |z|\to 1^-. \label{e:u_star_close4}
       \end{equation}

We then approximate $u_1$ and $u_2$ separately.
 The Riesz measure $\mu_1=\mu_{u_1}$  admits a~regular partition
with respect to the function $b_1(z)$:
\begin{enumerate}
\item[\rm (i)]
$\mu_1 (Q_{n,k}^j)=\mu_1(\widehat Q_{n,k}^j)=\mu_1({Q''}_{n,k}^j)=2$ for all $n,k,j$;
\item[\rm (ii)]
the interiors of all polar rectangles  $Q_{n,k}^j$, $\widehat Q_{n,k}^j$ and $ {Q''}_{n,k}^j$ are pairwise disjoint
for all $n,k,j$.
\item[\rm (iii)]
the sides of each rectangle are comparable; 
\item[\rm (iv)]
$\diam Q_{n,k}^j\asymp b_1(r_{n,k})$,
$\diam \widehat Q_{n,k}^j\asymp b_1(\widehat r_{n,k})$ and $\diam  {Q''}_{n,k}^j\asymp b_1(r''_{n,k})$; 
\item[\rm (v)]
$\mu_1\big(\mathbb{D}\setminus \bigcup_n (\bigcup_{k,j}  Q_{n,k}^j \cup
\bigcup_{k,j} \widehat Q_{n,k}^j\cup \bigcup_{k,j}  {Q''}_{n,k}^j) \big)=0$; 
\item[\rm (vi)]
for appropriate values $n, k$, $j$, the measure $\mu_1$ admits the~representation 
$$\mu_1=\sum_n \left(\sum_{k,j} \mu\bigr|_{Q_{n,k}^j}+ \sum_{k,j}\mu\bigr|_{\widehat Q_{n,k}^j}+ \sum_{k,j}\mu\bigr|_{ {Q''}_{n,k}^j}\right).$$
\end{enumerate}
It is easy to check that $\mu_1$ is locally regular with respect to $b_1$, that is, 
$$ \int_0^{b_1(|z|)} \frac
{\mu_1(D(z,t))}t \, dt \lesssim 1, \quad \rho_0<|z|<1,$$ for some constant
$\rho_0\in (0,1)$. In fact, 
we have 
$\mu_1(D(z,t))\lesssim t^2/(1-|z|)^2$ for all $0<t\le \frac{ 1-|z|}2$, which together with 
$b_1(r)=(1-r)/2$ gives local regularity of  $\mu_1$ with respect to $ b_1$.   

By Theorem \ref{t:3} there exists an analytic function $A_1$ in $\D$ such that
\begin{equation} \label{e:u1_oneside}
 \sup_{z\in\D} \big( \log |A_1(z)|- u_1(z) \big)<\infty,
\end{equation}
and for each $\varepsilon>0$ there exist $\rho_1=\rho_1(\varepsilon) \in (0,1)$, $G_1=G_1(\varepsilon)>0$ satisfying
\begin{equation}\label{e:u1_eps_approx}
 \big|\log |A_1(z)|- u_1(z)\big|<G_1, \quad \rho_1<|z|<1, \quad z\not\in E^1_\varepsilon,
\end{equation}
where $E^1_\varepsilon=\{z\in \D: \dist(z, Z_{A_1})\le \varepsilon b_1(|z|)\}$. Moreover, the zero set $Z_{A_1}$ of the function $A_1$ satisfies $Z_{A_1}\subset \bigcup_{\zeta\in \supp \mu} D(\zeta, K_1 \, b_1(|\zeta|))$ for some $K_1>0$.

The Riesz measure $\mu_2=\mu_{u_2}$    admits a~regular partition
with respect to the function $b_2(z)$:
\begin{enumerate}
\item[\rm (i)]
$\mu_2 ({Q^*}_{n}^j)=2$ for all $n,j$; 
\item[\rm (ii)]
the interiors of all polar rectangles  ${Q^*}_{n}^j$ are pairwise disjoint for all $n,j$; 
\item[\rm (iii)]
the sides of each rectangle are comparable; 
\item[\rm (iv)]
$\diam {Q^*}_{n}^j\asymp b_2(r^*_{n})$; 
\item[\rm (v)]
$\mu_2\big(\mathbb{D}\setminus \bigcup_n (\bigcup_{j}  {Q^*}_{n}^j )\big)=0$; 
\item[\rm (vi)]
for all appropriate values $n$, $j$, the measure $\mu_2$ admits the representation 
$$\mu_2=\sum_n \left(\sum_{j} \mu\bigr|_{{Q^*}_{n}^j} \right).$$
\end{enumerate}
Corresponding to the above, the
measure $\mu_2$ is locally regular with respect to $b_2$.
We have 
\begin{equation*}
\mu_2(D(z,t))\lesssim
  \frac{t^2 \big(\log\frac C{1-|z|} \big)^2}{(1-|z|)^2}, \quad 0<t\le \frac{1-|z|}{\log \frac{C}{1-|z|}},
\end{equation*}
which yields the required property.

By Theorem \ref{t:3} there exists an analytic function $A_2$ in $\D$ such that
\begin{equation}\label{e:u2_oneside}
 \sup_{z\in\D} \big( \log |A_2(z)|- u_2(z) \big)<\infty,
\end{equation}
and for each $\varepsilon>0$  there exist $\rho_2=\rho_2(\varepsilon)\in (0,1)$, $G_2=G_2(\varepsilon)>0$ satisfying
\begin{equation}\label{e:u2_eps_approx2}
 \big|\log |A_2(z)|- u_2(z)\big|<G_2, \quad \rho_2<|z|<1, \quad z\not\in E^2_\varepsilon,
\end{equation}
where $E^2_\varepsilon=\{z\in \D: \dist(z, Z_{A_2})\le \varepsilon b_2(|z|)\}$. Moreover, the zero set $Z_{A_2}$ of the function $A_2$ satisfies $Z_{A_2}\subset \bigcup_{\zeta\in \supp \mu} D(\zeta, K_2 \, b_2(|\zeta|))$ for some $K_2>0$.

Define $A=A_1 A_2$. Then, by \eqref{e:u1star_est}, \eqref{e:u_23_star}, \eqref{e:u_star_close4}, \eqref{e:u1_oneside}, and \eqref{e:u2_oneside} we obtain
\begin{align*}%\nonumber
\log|A(z)|-\varphi(z) & =\Big(\log|A_1(z)|-u_1(z) \Big)+\Big(\log|A_2(z)|-u_2(z)\Big)-\sum_{j=1}^4 u_j^*(z) \\
& \lesssim \log \log \frac{1}{1-|z|}, \quad |z|\to 1^-.
  \end{align*}
Since $\varphi(z)=\varphi(|z|)$, there exists a~constant $Y_1>0$ such that
\begin{equation} \label{eq:unoest}
 \log M(r,A)\le \varphi(r)+ Y_1 \log \log \frac{1}{1-|z|}, \quad |z|\to 1^-.
\end{equation}

 Let $E_\varepsilon=E_\varepsilon^1 \cup E_\varepsilon^2$. It is clear that $E_\varepsilon \subset \{z\in \D: \dist(z, Z_A)\le \varepsilon (1-|z|)\}$. Combining \eqref{e:u1star_est}, \eqref{e:u_23_star}, \eqref{e:u_star_close4}, \eqref{e:u1_eps_approx}, and \eqref{e:u2_eps_approx2} we obtain
 \begin{equation*}%\nonumber
   \big|\log|A(z)|-\varphi(z)\big| \lesssim 1+  \log \log \frac{1}{1-|z|}, \quad \max\{\rho_1, \rho_2\}<|z|<1,
\quad  z\not \in E_\varepsilon,
  \end{equation*}
as $|z|\to 1^-$,
where the comparison constant depends on $\varepsilon$. This proves Part~(a) in Lemma~\ref{lemma:approx}.

It follows from the proof of Theorem~\ref{t:3} that for 
each rectangle $Q_{n,k}^j$, $\widehat Q_{n,k}^j$, ${Q''}_{n,k}^j$  %${Q^*}_{n}^j)$)
 there corresponds  exactly two zeros of $A_1$ lying in a neighborhood of the rectangle of radius equal to the diameter of the rectangle.
Hence the number of
zeros of $A_1$  in each such neighborhood is uniformly bounded by some constant~$P_1$.
 Suppose that $\partial D(0,r)\cap Q_{n,k}^j\ne \emptyset$ for some $n,k,j$. Then the linear measure of 
$\partial D(0,r) \cap E_\varepsilon^1$ is at most 
a~constant multiple of $P_1\varepsilon$.
  The same is also true  for $\widehat Q_{n,k}^j$, ${Q''}_{n,k}^j$, in addition to $Q_{n,k}^j$.
  Similarly, ${Q^*}_{n}^j$
  corresponds  exactly two zeros of $A_2$ lying in a neighborhood of the rectangle of radius equal to the diameter of the rectangle.
Thus, the number of
zeros of $A_2$  in each such neighborhood is uniformly bounded by some constant $P_2$. Consequently,  the linear measure 
of $\partial D(0,r) \cap E_\varepsilon^2$ is at most 
a~constant multiple of $P_2\varepsilon$,
provided that $\partial D(0,r)\cap {Q^*}_{n}^j\ne \emptyset $.
By fixing a~sufficiently small $\varepsilon>0$,
the linear measure of $ E_\varepsilon  \cap \partial D(0,r) $ is less  than $2\pi r$, where $r\in (1/2,1)$.
 So, if $r$ is sufficiently close to 1,  then there exists $z_r\in \partial D(0,r)\setminus E_\varepsilon$ with $|z_r|=r$.
Therefore,
\begin{equation} \label{eq:dosest}%\label{e:mrA_lower-est}
  \log M(r,A)\ge \log |A(z_r)|\ge \varphi(r)  -  Y_2 \log\log \frac{1}{1-r}, \quad |z|\to 1^-,
\end{equation}
for some $Y_2>0$. By combining~\eqref{eq:unoest} and~\eqref{eq:dosest}, we conclude Part (b) in~Lemma~\ref{lemma:approx}.
The lemma is now proved.
 \end{proof}

\subsection{Completion of the proof of Theorem~\ref{e:sol_limits2}}
The following auxiliary result is stated for convenience.

\begin{oldtheorem}[\protect{\cite[Corollary~6]{CGHRequiv}}] \label{thm:oold}
Let $0<R<\infty$ and $f$
meromorphic in a~domain containing $\overline{D(0,R)}$.
Suppose that $j,k$ are integers with $k>j\geq 0$, and $f^{(j)}\not\equiv 0$.
Then
\begin{align*}
    \int_{r'<|z|<r}  \bigg| \frac{f^{(k)}(z)}{f^{(j)}(z)}  \bigg|^{\frac{1}{k-j}} \, dm_2(z) 
\lesssim R \, \log\frac{e\, (R-r')}{R-r} \, \left(  1 + \log^+ \frac{1}{R-r} + T(R,f)\right)
\end{align*}
for $0\leq r' < r<R$.
\end{oldtheorem}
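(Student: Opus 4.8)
The plan is to pass from the quotient of high-order derivatives to a first-order logarithmic derivative by a telescoping product together with the arithmetic–geometric mean inequality, represent that derivative through the differentiated Poisson–Jensen formula, and then integrate the resulting pieces over the annulus $\{r'<|z|<r\}$ by hand. Writing $\frac{f^{(k)}}{f^{(j)}}=\prod_{l=j}^{k-1}\frac{f^{(l+1)}}{f^{(l)}}$ and applying AM–GM — which is exactly what the exponent $\frac1{k-j}$ is for — gives
\[
\left|\frac{f^{(k)}(z)}{f^{(j)}(z)}\right|^{\frac1{k-j}}\le\frac1{k-j}\sum_{l=j}^{k-1}\left|\frac{f^{(l+1)}(z)}{f^{(l)}(z)}\right|,
\]
so it suffices to estimate $\int_{r'<|z|<r}\bigl|g'(z)/g(z)\bigr|\,dm_2(z)$ for $g=f^{(l)}$, $j\le l\le k-1$, and then recast the outcome in terms of $T(R,f)$ via the standard Nevanlinna inequalities $N(R,1/f^{(l)})\le T(R,f^{(l)})+O(1)$ and $T(R,f^{(l)})\le(l+1)T(R,f)+O\bigl(1+\log^+\tfrac1{R-r}\bigr)$; the last error term is the source of the additive $1+\log^+\tfrac1{R-r}$ appearing in the statement. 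The same exponent $\frac1{k-j}$ also guarantees that near any zero or pole of $f$ or of $f^{(j)}$ the original integrand is at worst $|z-c|^{-\beta}$ with $\beta\le1<2$, hence locally area-integrable, so every integral below is finite.

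For $g$ meromorphic in a neighbourhood of $\overline{D(0,R)}$ with zeros $\{a_\mu\}$ and poles $\{b_\nu\}$ in $D(0,R)$, differentiating the Poisson–Jensen formula gives
\[
\frac{g'(z)}{g(z)}=\frac1{2\pi}\int_0^{2\pi}\frac{2Re^{it}}{(Re^{it}-z)^2}\log|g(Re^{it})|\,dt+\sum_\mu\frac{R^2-|a_\mu|^2}{(z-a_\mu)(R^2-\overline{a_\mu}z)}-\sum_\nu\frac{R^2-|b_\nu|^2}{(z-b_\nu)(R^2-\overline{b_\nu}z)}.
\]
Integrating the absolute value over the annulus, the boundary term is handled by Fubini and rotational symmetry, which reduce matters to $\int_{r'<|z|<r}\frac{dm_2(z)}{|R-z|^2}\lesssim\log\frac{e(R-r')}{R-r}$ — a routine integral, and precisely where the stated logarithmic factor originates — combined with $\frac1{2\pi}\int_0^{2\pi}\bigl|\log|g(Re^{it})|\bigr|\,dt\le 2T(R,g)+O(1)$; this contributes $\lesssim R\log\frac{e(R-r')}{R-r}\,(1+T(R,g))$. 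For the zero/pole sums one splits each point $c$ according to its position relative to the annulus: when $c$ is well away from the annulus one uses $\bigl|\frac{R^2-|c|^2}{(z-c)(R^2-\overline{c}z)}\bigr|\lesssim\frac{R-|c|}{|z-c|\,(R-r)}$ together with the elementary bound $\int_{r'<|z|<r}\frac{dm_2(z)}{|z-c|}\lesssim R$ and the count $\sum_c(R-|c|)=\int_0^R n(t,1/g)\,dt\lesssim R\,(1+T(R,g))$ coming from Jensen's formula; when $c$ is close to or inside the annulus one uses instead $\bigl|\frac{R^2-|c|^2}{(z-c)(R^2-\overline{c}z)}\bigr|\lesssim\frac1{|z-c|}+\frac1{R-r}$, whose area integral over the annulus is admissible, and controls the number of such $c$, again by Jensen, using the room afforded by meromorphy on a neighbourhood of $\overline{D(0,R)}$. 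Summing over $l=j,\dots,k-1$ and over all zeros and poles yields the asserted estimate.

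The main obstacle is the zeros and poles lying near the circle $|z|=R$: a careless use of either $\frac1{|z-c|}$ or $\frac1{R-r}$ produces a spurious factor $\frac{R}{R-r}$ in place of the logarithm $\log\frac{e(R-r')}{R-r}$, or a power of $T(R,f)$ rather than a single power. Overcoming this requires balancing the Blaschke-type cancellation $R^2-|c|^2=(R-|c|)(R+|c|)$ against the singularity $\frac1{|z-c|}$ — exploiting that $R-|c|$ is comparable to $R-r$ exactly when $c$ is near the annulus — and applying Jensen's formula at a radius slightly exceeding $R$ so that the zero/pole count stays proportional to $1+T(R,f)$. The remaining nontrivial bookkeeping is the passage from $T(R,f^{(l)})$ back to $T(R,f)$ and the estimation of the logarithmic-derivative proximities, which together account for the additive $1+\log^+\tfrac1{R-r}$ in the final bound.
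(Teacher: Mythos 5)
The high-level architecture you propose — telescope $f^{(k)}/f^{(j)}$ into first-order factors, apply the AM--GM inequality to pass to $\sum_l |f^{(l+1)}/f^{(l)}|$, represent each logarithmic derivative via the differentiated Poisson--Jensen formula, and treat the boundary integral and the zero/pole sums separately — is a sensible and likely correct framework, and your treatment of the boundary term is fine (the integral $\int_{r'<|z|<r} |Re^{it}-z|^{-2}\,dm_2(z) \asymp \log\tfrac{R^2-(r')^2}{R^2-r^2}\lesssim \log\tfrac{e(R-r')}{R-r}$ is exactly where the outer logarithm comes from). However, your handling of the zero/pole sums contains a genuine gap that makes the conclusion fail.

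The problem is that in both your cases you bound $|R^2-\overline{c}z|$ from below by the \emph{worst-case} quantity $R(R-r)$ rather than the \emph{pointwise} quantity $R(R-|z|)$. For the "well away" case your per-point bound $(R-|c|)/(|z-c|(R-r))$, combined with $\int_{r'<|z|<r}|z-c|^{-1}\,dm_2\lesssim R$ and $\sum_c(R-|c|)\lesssim R(1+T(R,g))$, yields $R^2(1+T(R,g))/(R-r)$. This is \emph{not} dominated by $R\log\tfrac{e(R-r')}{R-r}\bigl(1+\log^+\tfrac1{R-r}+T(R,f)\bigr)$: the left side blows up like $(R-r)^{-1}$ as $r\to R^-$, while the right side only grows polylogarithmically. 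A concrete test case makes the over-estimate visible: take $f(z)=z^m-(1-\epsilon)^m$ with $R=1$, $r=1-2\epsilon$, $r'=0$, $\epsilon=1/m^2$. Then $T(1,f)=O(1)$, the actual left-hand side of the theorem is $O(1)$, and the stated right-hand side is $\asymp(\log m)^2$; but your zero-sum bound gives $\sum_c (R-|c|)R/(R-r)\asymp m\cdot\tfrac{\epsilon}{2\epsilon}\asymp m$, which is not $\lesssim(\log m)^2$. The lost logarithm is recovered precisely by keeping $|R^2-\overline{c}z|\ge R(R-|z|)$ pointwise and integrating radially: one then faces $\int_{r'}^r\tfrac{s}{R-s}\bigl(\int_0^{2\pi}|se^{i\theta}-c|^{-1}d\theta\bigr)ds$, whose radial weight $(R-s)^{-1}$ produces $\log\tfrac{R-r'}{R-r}$, while the angular integral contributes only a further logarithmic factor. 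Multiplying by $R-|c|$ and summing via $\sum_c(R-|c|)\lesssim R(1+T)$ then gives the right order.

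For the "close" case your plan is to bound by $\tfrac1{|z-c|}+\tfrac1{R-r}$ and to "control the number of such $c$ by Jensen, using the room afforded by meromorphy on a neighbourhood of $\overline{D(0,R)}$." This does not close either. First, the $\tfrac1{R-r}$ piece, integrated over the annulus and multiplied by however many such $c$ there are, again produces an $(R-r)^{-1}$ growth rate rather than a logarithm. Second, and more fundamentally, the zero/pole count $n(R,\cdot)$ near $|z|=R$ is \emph{not} controlled by $T(R,f)+O(1)$ with constants that are uniform in $R$; the room-above-$R$ argument gives constants depending on the (unspecified) domain of meromorphy, and in the paper's own application (Section~2.3, where $R=(1+r)/2\to1^-$) the implied constant must be independent of $R$. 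The right tool is exactly the weighted count $\sum_c(R-|c|)\lesssim R\bigl(T(R,g)+O(1)\bigr)$, which you already cite, applied \emph{without} ever bounding the raw count $n(R,\cdot)$. So the gap is not cosmetic: you need the pointwise $R-|z|$ factor in the Blaschke estimate and a genuine radial-then-angular integration, not the frozen $R-r$ and a count of boundary zeros.

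A minor remark on the AM--GM step: strictly you should note that if some intermediate $f^{(l)}$ ($j<l<k$) vanished identically then $f^{(k)}\equiv0$ and the inequality is trivial, so the telescoping product is well defined whenever the left side is nonzero.
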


Let $k\in\N$ be fixed and 
$k\leq p_1<p_2 \leq p < \infty$ be constants such that $p_2 \geq 2k$.
By \cite[Theorem~1.4]{CHR2010}, 
all non-trivial solutions of~\eqref{eq:dek} satisfy $\sigma_M(f)=p_2/k-1$ and therefore it suffices to consider the
lower order of growth. Note that 
in the case $p_2\leq k$, all solutions $f$ of~\eqref{eq:dek} satisfy $\sigma_M(f)=\lambda_M(f)=0$ by \cite[Theorem~1.4]{CHR2010}.

Our argument is based on Lemmas~\ref{lemma:app} and \ref{lemma:approx}, and we take advantage of the same
notation.
Choose $\eta_n=n+1$ for all $n\in\N$ in Lemma~\ref{lemma:app}. As a~general property we note that,
if $0<r<1$ and $1-\rho=C(1-r)^q$ for some $C>0$ and $q>1$, then
	\begin{equation}\label{pmppvk}
	\begin{split}
	\left(\frac{\rho}{r}\right)^\frac1{1-r}
	&=\left(1+\frac{(1-r)\left(1-C(1-r)^{q-1}\right)}{r}\right)^{\frac1{1-r}}\\
	&=e+o(1),\quad r\to1^-.
	\end{split}
	\end{equation}
If $f$ is a solution of $f^{(k)}+Af=0$, where $\log|A|$ is approximated with $\varphi$ as in Lemma~\ref{lemma:approx}, then 
the growth estimate \cite[Theorem~5.1]{HKR:2004} and \eqref{pmppvk} yield
	\begin{align}
	\log^+|f(\widehat{r}_ne^{i\theta})|-C
	&\lesssim\int_0^{\widehat{r}_n}|A(te^{i\theta})|^\frac1k\,dt
	=\left(\int_0^{r_n}+\int_{r_n}^{r_n'}+\int_{r_n'}^{\widehat{r}_n}\right)|A(te^{i\theta})|^\frac1k\,dt \notag\\
	&\lesssim\int_0^{r_n}\frac{dt}{(1-t)^\frac{p_2+o(1)}{k}}
	+\frac1{(1-r_n)^\frac{p_2+\e_{n}+o(1)}{k}}\int_{r_n}^{r_n'}\left(\frac{t}{r_n}\right)^\frac{R_n}{k}\,dt \notag\\
	&\qquad+\int_{r_n'}^{\widehat{r}_n}\left(\frac{t}{r_n}\right)^\frac{R_n}{k}\frac{dt}{(1-t)^{\frac{p_1+o(1)}{k}}}\notag\\
	&\lesssim\frac{1}{(1-r_n)^{\frac{p_2+o(1)}{k}-1}}
	+\frac{r_n'-r_n}{(1-r_n)^\frac{p_2+\e_{n}+o(1)}{k}}
	+\frac1{(1-\widehat{r}_n)^{\frac{p_1+o(1)}{k}-1}}\notag\\
	&\lesssim\frac{1}{(1-\widehat{r}_n)^{\left(\frac{p_1}{p_2+\e_n}\frac{p_2}{p}\right)\left(\frac{p_2+o(1)}{k}-1\right)}}
	+\frac1{(1-\widehat{r}_n)^{\frac{p_1+o(1)}{k}-1}}\notag\\
	&\asymp\frac{1}{(1-\widehat{r}_n)^{\frac{p_1}{p}\left(\frac{p_2}{k}-1\right)+o(1)}}
	+\frac1{(1-\widehat{r}_n)^{\frac{p_1+o(1)}{k}-1}}, \label{eq:ffest}
	\end{align}
for all $\theta\in\R$ as $n\to\infty$, which gives the desired upper bound
for the lower order of growth.

To obtain a~lower bound for the lower order of growth, we argue as follows.
For any $r\in (0,1)$  sufficiently close to $1$ there exists $N\in\N$ such that $r_N < r \leq r_{N+1}$.
Choose $R=(1+r)/2$, $r'=0$, and apply Theorem~\ref{thm:oold} to obtain
\begin{equation*}
 \int_{|z|<r}  \big| A(z) \big|^{1/k} \, dm_2(z)
\lesssim \log\frac{2e}{1-r} \, \left(  1 + \log^+ \frac{2}{1-r} + \log^+M\big( \tfrac{1+r}{2},f\big)\right).
\end{equation*}
Clearly, it suffices to estimate the integral of the coefficient $A$. 
Note also that 
\begin{equation*}
\liminf_{R\to 1^-} \, \frac{\log^+ \log^+ M(R,f)}{\log\frac{1}{1-R}}
 = \liminf_{r\to 1^-} \, \frac{\log^+ \log^+ M(\tfrac{1+r}{2},f)}{\log\frac{1}{1-r}}
\end{equation*}
We continue in two separate parts.

(i) Let $r\geq r_N'$. Then $r_N'\leq r \leq r_{N+1}$. 
Lemma~\ref{lemma:approx} gives a~lower bound for the modulus of the coefficient only
outside the exceptional set. Since the exceptional set lies (almost completely) outside the annulus
$A_n' = \{z\in\D : r_n < |z| < r_n'\}$, we have a~good control of the coefficient there. Note that
$\Delta\varphi$ vanishes on these annuli.
Then, 
\begin{equation*}
\begin{split}
\int_{|z|<r}  \big| A(z) \big|^{1/k} \, dm_2(z) 
& \geq \sum_{n=1}^N \int_{r_n+\varepsilon(1-r_n)<|z|<r_n'-\varepsilon(1-r_n')}  \big| A(z) \big|^{1/k} \, dm_2(z)\\
& \gtrsim \sum_{n=1}^N \frac{1}{(1-r_n')^{\frac{p_1}{k}-\frac{p_1}{p_2}-o(1)}}
 \asymp \frac{1}{(1-r_N')^{\frac{p_1}{k}-\frac{p_1}{p_2}-o(1)}},
\end{split}
\end{equation*}
and therefore
\begin{equation*}
\frac{\log^+ \log^+ M(\tfrac{1+r}{2},f)}{\log\frac{1}{1-r}}
\geq o(1) + \frac{\Big( \frac{p_1}{k}-\frac{p_1}{p_2} - o(1)\Big) \log^+ \frac{1}{1-r_N'}}{\log\frac{1}{1-r_{N+1}}},
\quad r_N'\leq r \leq r_{N+1}.
\end{equation*}
Lemma~\ref{lemma:app} implies
\begin{equation*}
1-r_{N+1} = \frac{1-r_N''}{\eta_N} \sim \frac{1-\widehat r_N}{\eta_N \log\frac{1}{1-\widehat r_N}}
= \frac{(1-r_N')^{\frac{p}{p_2}}}{\eta_N \, \frac{p}{p_2} \log\frac{1}{1-r_N'}}.
\end{equation*}
By the discussion preceding the statement
of Lemma~\ref{lemma:app}, 
\begin{equation*}
\lim_{n\to\infty} \, \frac{\log\eta_n}{\log\frac{1}{1-r_n'}}  = 0,
\end{equation*}
and we obtain
\begin{equation*} %\label{eq:nic}
\frac{\log^+ \log^+ M(\tfrac{1+r}{2},f)}{\log\frac{1}{1-r}}
\geq o(1) + \frac{p_2}{p} \left( \frac{p_1}{k}-\frac{p_1}{p_2} - o(1) \right) ,
\quad r_N'\leq r \leq r_{N+1}.
\end{equation*}

(ii) Let $r< r_N'$. Then $r_N<r<r_N'$. 
Choose $\varepsilon \in (0, \frac{\pi}{C_4})$. It follows from Lemma~\ref{lemma:approx}  that $m_1(\partial D(0,r)\setminus E_\varepsilon)\ge 2\pi -C_4\varepsilon\ge \pi$. 
The, applying Lemma \ref{lemma:app}, we  deduce 
\begin{align*}
\int_{0<|z|<r} |A(z)|^{\frac 1k} dm_2(z) 
& \ge  \int_0^{r_N} \int_{\{\theta\in [0,2\pi]: te^{i\theta}\not \in E_\varepsilon\}} |A(te^{i\theta})|^{\frac 1k}  t\, dt\, d\theta \\ 
& \gtrsim \frac{1}{\log^{C_2/k} \frac1{1-r_N}} \int_0^{r_N} e^{\frac{\varphi(t)}{k}} \, dt \\
& \ge   \frac{1}{\log^{C_2/k} \frac1{1-r_N}} \int_{r_{N-1}''}^{r_N}  
	\frac{ dt}{(1-t)^{\frac{p_2}{k}+o(1)}}  \\ 
& 	\gtrsim \frac{1}{(1-r_N)^{\frac{p_2}{k}-1+o(1)}} = \frac{1}{(1-r_N')^{\frac{p_1}{p_2+\varepsilon_n}(\frac{p_2}{k}-1+o(1))}}\\
&	\ge  \frac{1}{(1-r)^{\frac {p_1}k-\frac{p_1}{p_2}+o(1)}}.
	\end{align*}
Hence,
\begin{equation*} %\label{eq:nic}
\frac{\log^+ \log^+ M(\tfrac{1+r}{2},f)}{\log\frac{1}{1-r}}
\geq o(1) + \frac{p_1}{k}-\frac{p_1}{p_2} 
\geq o(1) + \frac{p_2}{p} \left( \frac{p_1}{k}-\frac{p_1}{p_2} \right) ,
\quad r_N\leq r \leq r_{N}'.
\end{equation*}
By choosing different values for the parameter $p\in [p_2,\infty)$,
we obtain the assertion. Note that the lower order of growth for solutions~\eqref{eq:dek} is as large as possible if $p=p_2$, which corresponds to the
value $\alpha=p_1/p_2$. However, $\alpha=1$ corresponds to the case when both terms in~\eqref{eq:ffest}
are of similar growth.

\section{Growth estimates for logarithm of maximum modulus }\label{sec:maxmod}

This section consists of preparations for the proof of Theorem~\ref{th:first}.
We use  the Wiman-Valiron theory adapted for the lower order of growth.

Recall that $\lambda_*(f)$ and $\sigma_*(f)$ are defined in~\eqref{eq:def_ls} for any function $f$ analytic in~$\D$.
It is known that $\sigma_*(f)=\sigma_M(f)+1$, provided 
$$\limsup_{r\to1^-}\frac{\log M(r,f)}{\log \frac 1{1-r}}=\infty,$$
see~\cite[Lemma 1.2.16]{Str}. The same is true for the maximum term and central index instead of maximum modulus and $K(r,f)$, 
respectively. That is,
	$$
	\limsup_{r\to1^-}\frac{\log^+\nu(r,f)}{\log\frac1{1-r}}
        =\limsup_{r\to1^-}\frac{\log^+\log^+\mu(r,f)}{\log\frac1{1-r}}+1
        = \sigma_M(f)+1,
	$$
see~\cite[Theorems~1.5.1 and~1.5.2]{JK}.
Recall that for an analytic function $f(z)=\sum_{n=0}^\infty \widehat{a}(n)z^n$ in $\D$, 
the maximum term is $\mu(r,f)=\max\{|\widehat{a}(n)r^n| : n\in\N\cup\{0\}\}$, while the central index 
is $\nu(r,f)=\max\{n:\mu(r,f)=|\widehat{a}(n)r^n|\}$. These quantities obey the relation
	\begin{equation} \label{eq:twostars}
	\log\mu(r,f)=\log\mu(r_0,f)+\int_{r_0}^r\frac{\nu(t,f)}{t}\,dt, \quad 0<r_0<r<1,
	\end{equation}
by \cite[Theorem~1.4.1]{JK}, 
assuming that $f(0)=1$ and $\sup\{ |\widehat{a}(n)|: n\in\N\cup\{0\}\}=\infty$.
Note that, if $\sup\{ |\widehat{a}(n)|: n\in\N\cup\{0\}\}<\infty$, then $|f(z)| = O(1/(1-|z|))$
for $z\in\D$.
The representation~\eqref{eq:twostars} implies
\begin{equation} \label{eq:onestar}
\nu(r,f)=r\left(\log\mu(r,f)\right)_+', \quad r_0<r<1.
\end{equation}
In addition, it is known that
	$$
	\lambda_M(f)
	\le\liminf_{r\to 1^-}\frac{\log^+ \nu(r,f)}{\log\frac{1}{1-r}}
	\le\lambda_M(f)+1,
	$$
where both inequalities can be strict~\cite[Theorem~1.5.2]{JK}.

\begin{proposition} \label{p:k_low_ord}
Let $f$ be an analytic function in $\mathbb{D}$. Then $\lambda_*(f)\le\lambda_M(f)+1$ if $f$ is unbounded, 
and $\lambda_M(f)+\frac{\lambda_M(f)}{\sigma_M(f)}\le\lambda_*(f)$ if $\sigma_M(f)>0$.
\end{proposition}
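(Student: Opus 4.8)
The plan is to exploit the classical fact that $\log M(r,f)$ is a convex, nondecreasing function of $\log r$; equivalently, $K(r,f)=r(\log M(r,f))'_+$ is nonnegative, nondecreasing in $r$, and
\[
\log M(R,f)-\log M(r,f)=\int_r^R\frac{K(t,f)}{t}\,dt,\qquad 0<r<R<1.
\]
If $f$ is unbounded, then $\log M(r,f)\to\infty$ as $r\to1^-$, and by the identity above this forces $K(r,f)\to\infty$ as well; in particular $\log^+K(r,f)=\log K(r,f)$ and $\log^+\log^+M(r,f)=\log\log M(r,f)$ for $r$ near $1$.

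To prove $\lambda_*(f)\le\lambda_M(f)+1$ for unbounded $f$, I would use monotonicity of $K$ in the form $\log M(R,f)\ge K(r,f)\log\frac{R}{r}$ for $0<r<R<1$. Choosing $R=\frac{1+r}{2}$, so that $1-R=\frac{1-r}{2}$ and $\log\frac{R}{r}\sim\frac{1-r}{2}$ as $r\to1^-$, gives $K(r,f)\lesssim\log M(\tfrac{1+r}{2},f)/(1-r)$. Passing to logarithms, dividing by $\log\frac1{1-r}$, and taking the lower limit as $r\to1^-$ — noting that $\log\frac1{1-r}$ and $\log\frac1{1-R}$ differ by a bounded amount and that $R\to1^-$ — yields $\lambda_*(f)\le\lambda_M(f)+1$.

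For the lower bound, suppose $\sigma_M(f)>0$; we may assume $0<\lambda_M(f)$ and $\lambda_*(f)<\infty$, the complementary cases being trivial. A soft preliminary step gives $\lambda_M(f)\le\lambda_*(f)$ in all cases: choose $r_n\to1^-$ with $K(r_n,f)\le(1-r_n)^{-(\lambda_*(f)+\varepsilon)}$; since $K$ is nondecreasing, $K(t,f)\le K(r_n,f)$ for $t\le r_n$, and the displayed identity gives $\log M(r_n,f)\lesssim(1-r_n)^{-(\lambda_*(f)+\varepsilon)}$, hence $\lambda_M(f)\le\lambda_*(f)+\varepsilon$; letting $\varepsilon\to0^+$ settles, in particular, the case $\sigma_M(f)=\infty$, where $\lambda_M(f)/\sigma_M(f)=0$. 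Assume now $\sigma_M(f)<\infty$. Fix $\varepsilon\in(0,\lambda_M(f)/2)$ and set $\gamma=\frac{\lambda_M(f)-2\varepsilon}{\sigma_M(f)+\varepsilon}\in(0,1)$, the bound $\gamma<1$ coming from $\lambda_M(f)\le\sigma_M(f)$. For $R$ close to $1$ define $r=r(R)$ by $1-r=(1-R)^{\gamma}$; then $r<R$ and $R-r\le(1-R)^{\gamma}$. By the definitions of $\lambda_M$ and $\sigma_M$, for $R$ near $1$,
\[
\log M(R,f)\ge\Bigl(\tfrac1{1-R}\Bigr)^{\lambda_M(f)-\varepsilon},\qquad \log M(r,f)\le\Bigl(\tfrac1{1-r}\Bigr)^{\sigma_M(f)+\varepsilon}=\Bigl(\tfrac1{1-R}\Bigr)^{\lambda_M(f)-2\varepsilon},
\]
so $\log M(R,f)-\log M(r,f)\ge\tfrac12\bigl(\tfrac1{1-R}\bigr)^{\lambda_M(f)-\varepsilon}$ once $R$ is close enough to $1$. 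Combined with $\log M(R,f)-\log M(r,f)=\int_r^R\frac{K(t,f)}{t}\,dt\le K(R,f)(R-r)/r$, this gives $K(R,f)\gtrsim(1-R)^{-(\lambda_M(f)-\varepsilon+\gamma)}$, whence $\lambda_*(f)\ge\lambda_M(f)-\varepsilon+\gamma$. Letting $\varepsilon\to0^+$ makes $\gamma\to\lambda_M(f)/\sigma_M(f)$, and the inequality follows.

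I expect the main obstacle to be the calibration of the auxiliary radius in the last step: the exponent $\gamma$ must be taken strictly below $\frac{\lambda_M(f)-\varepsilon}{\sigma_M(f)+\varepsilon}$ so that the upper estimate for $\log M(r,f)$ stays genuinely below the lower estimate for $\log M(R,f)$ — otherwise the increment $\log M(R,f)-\log M(r,f)$ could degenerate — while remaining close enough to $\frac{\lambda_M(f)}{\sigma_M(f)}$ that the sharp exponent survives in the limit $\varepsilon\to0^+$. The condition $\gamma<1$ is exactly what makes $R-r$ comparable to $(1-R)^{\gamma}$ rather than to the much smaller $1-R$, and is therefore essential to the estimate. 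Sharpness of the two bounds is a separate matter, to be addressed by exhibiting explicit examples.
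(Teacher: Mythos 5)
Your proof is correct and follows essentially the same route as the paper: the paper factors the argument through a general lemma (Proposition 3.3 there, about convex $h:(-\infty,0)\to\R$ and the right derivative $h'_+$), applied to $h(x)=\log M(e^x,f)$, whereas you work directly with $\log M(r,f)$ and $K(r,f)$ — but the auxiliary radius $1-r=(1-R)^\gamma$ with $\gamma\approx\lambda_M(f)/\sigma_M(f)$ is exactly the paper's $|g(x)|=(2|x|^\alpha)^{1/\beta}$, and your integral estimate $\log M(R,f)-\log M(r,f)\le K(R,f)(R-r)/r$ is the same convexity difference-quotient bound as $h(x)-h(g(x))\le h'_+(x)(x-g(x))$.
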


The second part of the statement is similar to a result proved in \cite[Corrigendum]{Sons68} for the maximum term and the central index. But since we have not found a proof in the existing literature, we offer a proof. Proposition~\ref{p:k_low_ord} is a direct consequence of a growth result for convex functions that will be discussed next. For $h:(-\infty,0)\to\mathbb{R}$ such that $\lim_{x\to0^-}h(x)=\infty$, let
	$$
	\alpha(h)=\liminf_{x\to 0^-}\frac{\log h(x)}{\log\frac1{|x|}}\quad\textrm{and}\quad
	\beta(h)=\limsup_{x\to 0^-}\frac{\log h(x)}{\log\frac1{|x|}}.
	$$
Let $\Omega$ denote the class of convex functions $h:(-\infty,0)\to\R$ satisfying the property $\lim_{x\to0^-}h(x)=\infty$. It is easy to see that the right derivative $h'_+$ exists at every point and
	\begin{equation*}%\label{feq1}
	h(x)=o(h'_+(x)),\quad x\to 0^-,
	\end{equation*}
for each $h\in\Omega$. Hence $\alpha(h)\le\alpha(h'_+)$. In particular, if $\alpha(h)=\infty$, then $\alpha(h'_+)=\infty$.

\begin{proposition}\label{fpr1} Let $h\in\Omega$. Then the following statements are valid:
\begin{enumerate}
\item[(a)] $\alpha(h'_+)\le\alpha(h)+1$;
\item[(b)] $\beta(h'_+)=\beta(h)+1$;
\item[(c)] $\displaystyle\alpha(h)+\frac{\alpha(h)}{\beta(h)}\le\alpha(h'_+)$, if $\alpha(h)<\infty$ and $\beta(h)>0$.
\end{enumerate}
\end{proposition}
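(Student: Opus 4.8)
The plan is to prove the three inequalities separately, after disposing of degenerate cases. In (a) there is nothing to do when $\alpha(h)=\infty$. In (c), if $\alpha(h)=0$ the claim is the trivial $0\le\alpha(h'_+)$, and if $\beta(h)=\infty$ it reduces to $\alpha(h)\le\alpha(h'_+)$, noted just above the proposition; so there I may assume $0<\alpha(h)\le\beta(h)<\infty$, and in (b) I may assume $\beta(h)<\infty$, the remaining case being covered by the integration argument below. The only structural input is that the difference quotients of a convex function are monotone, which I will use in the two forms
\[
h'_+(x)\ge\frac{h(x)-h(y)}{x-y}\ \ (y<x<0),\qquad h'_+(x)\le\frac{h(w)-h(x)}{w-x}\ \ (x<w<0),
\]
together with the elementary fact that $\log\frac1{|x|}$ and $\log\frac1{|cx|}$ differ by the constant $\log c$ for fixed $c>0$, hence are asymptotically equal as $x\to0^-$.

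For (a) and the upper bound in (b), I would take $w=x/2$ in the second inequality to get $h'_+(x)\le 2h(x/2)/|x|$ for $x$ near $0$; taking logarithms, dividing by $\log\frac1{|x|}$, substituting $u=x/2$ and using $\log\frac1{|x|}\sim\log\frac1{|u|}$, one reads off $\alpha(h'_+)\le\alpha(h)+1$ from the $\liminf$ and $\beta(h'_+)\le\beta(h)+1$ from the $\limsup$. For the reverse inequality in (b), I would argue by contradiction: if $\beta(h'_+)<\beta(h)+1$, then $h'_+(x)\le|x|^{-(\beta(h)+1-\delta)}$ for all $x$ near $0$ and some $\delta>0$; inserting this into $h(x)=h(x_0)+\int_{x_0}^x h'_+(t)\,dt$ and estimating $\int_{|x|}^{|x_0|}s^{-(\beta(h)+1-\delta)}\,ds$, a short case analysis on the sign of $\beta(h)-\delta$ gives, respectively, $h(x)\lesssim|x|^{-(\beta(h)-\delta)}$ (so $\beta(h)\le\beta(h)-\delta$), or $h(x)\lesssim\log\frac1{|x|}$ with $\beta(h)=\delta>0$ (so $\beta(h)=0$), or boundedness of $h$; each contradicts $\beta(h)>0$ or $h\in\Omega$, so $\beta(h'_+)\ge\beta(h)+1$, and with the previous step (b) follows.

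The heart of the matter is (c). I would fix $\varepsilon\in(0,\alpha(h)/2)$ and set $\theta=\frac{\alpha(h)-2\varepsilon}{\beta(h)+\varepsilon}$, noting $\theta\in(0,1)$ since $\alpha(h)\le\beta(h)$. By the definitions of $\alpha(h)$ and $\beta(h)$ there is $\rho\in(0,1)$ with $h(x)>|x|^{-(\alpha(h)-\varepsilon)}$ and $h(x)<|x|^{-(\beta(h)+\varepsilon)}$ for all $x\in(-\rho,0)$. For such an $x$ close to $0$ I would put $y=-|x|^{\theta}$; then $y<x<0$ and $|y|=|x|^{\theta}\to0$, so the second bound applies to $y$ and gives $h(y)<|y|^{-(\beta(h)+\varepsilon)}=|x|^{-(\alpha(h)-2\varepsilon)}$, whence
\[
h(x)-h(y)>|x|^{-(\alpha(h)-\varepsilon)}-|x|^{-(\alpha(h)-2\varepsilon)}\ge\tfrac12|x|^{-(\alpha(h)-\varepsilon)}
\]
for $x$ near $0$, while $x-y=|x|^{\theta}-|x|\le|x|^{\theta}$. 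The first convexity inequality then yields $h'_+(x)\ge\tfrac12|x|^{-(\alpha(h)-\varepsilon)-\theta}$; dividing $\log h'_+(x)$ by $\log\frac1{|x|}$, letting $x\to0^-$, and finally $\varepsilon\to0^+$ (so $\theta\to\alpha(h)/\beta(h)$) gives $\alpha(h'_+)\ge\alpha(h)+\alpha(h)/\beta(h)$.

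The step I expect to be the real obstacle is precisely this choice of comparison point in (c): $y$ must be placed \emph{polynomially} far to the left of $x$, with the exponent $\theta$ tied to the ratio $\alpha(h)/\beta(h)$, rather than at a fixed multiple of $x$ as in (a)/(b); when $h$ grows regularly ($\alpha(h)=\beta(h)$) one may take $\theta$ arbitrarily close to $1$ and recovers $\alpha(h'_+)=\alpha(h)+1$ consistently with (a). Once the correct scale for $y$ is identified, the remaining estimates are routine, and the deduction of Proposition~\ref{p:k_low_ord} is then immediate by applying this to $h(t)=\log M(e^{t},f)$ on $t\in(-\infty,0)$, for which $h'_+(t)=K(e^{t},f)$ and $\log\frac1{|t|}\sim\log\frac1{1-e^{t}}$.
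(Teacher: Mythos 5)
Your proposal is correct and follows essentially the same route as the paper: parts (a) and the upper bound in (b) via the difference quotient at $w=x/2$, the lower bound in (b) by integrating a contrapositive bound on $h'_+$, and part (c) by comparing $x$ with a point $y$ at the polynomial scale $|y|\approx|x|^{\alpha(h)/\beta(h)}$ (the paper's $g(x)=-(2|x|^\alpha)^{1/\beta}$ is the same choice up to a benign rescaling that merely cleans up the subtraction). The only cosmetic difference is that you absorb the slack via a $2\varepsilon$ gap in the exponent where the paper absorbs it via a factor of $2$ inside $g$; both yield the identical bound after passing to the limit.
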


\begin{proof}Since $h$ is nondecreasing and positive close to zero, there exists $x_1\in(-\infty,0)$ such that that
	$$
	\left(\frac{x}2-x\right)h'_+(x)
	\le\int_x^{\frac{x}{2}}h'_+(t)\,dt
	=h\left(\frac{x}{2}\right)-h(x)
	\le h\left(\frac{x}{2}\right),\quad x\in(x_1,0).
	$$
Therefore
	$$
	h'_+(x)\le\frac2{|x|}h\left(\frac{x}2\right),\quad x_1<x<0,
	$$
and the inequalities $\alpha(h'_+)\le\alpha(h)+1$ and $\beta(h'_+)\le\beta(h)+1$ follow. Thus, in particular, (a) is proved.

To deduce (b), it remains to show $\beta(h'_+)\ge\beta(h)+1$. Suppose on the contrary that $\beta(h'_+)<\beta(h)+1$. Then there exists $\e>0$ and $x_2\in(-\infty,0)$ such that
	$$
	h'_+(x)\le\left(\frac1{|x|}\right)^{\beta(h)-\e+1},\quad x_2\le x<0,
	$$
and hence
	\begin{align*}
	h(x)-h(x_2)
	&=\int_{x_2}^{x}h'_+(t)\,dt
	\le\int_{x_2}^{x}\left(\frac1{|t|}\right)^{\beta(h)-\e+1}\,dt\\
	&=\frac1{\beta(h)-\e}\left(\left(\frac1{|x|}\right)^{\beta(h)-\e}-\left(\frac1{|x_2|}\right)^{\beta(h)-\e}\right).
	\end{align*}
It follows that $\beta(h)\le\beta(h)-\e$ which is impossible. Thus (b) is proved.

It remains to prove the inequality in (c) under the hypotheses $\alpha(h)<\infty$ and $\beta(h)>0$. If $\alpha(h)=0$ or $\beta(h)=\infty$, the statement is trivial, so assume $\alpha(h)>0$ and $\beta(h)<\infty$. For given $\alpha\in(0,\alpha(h))$ and $\beta\in(\beta(h),\infty)$, there exists $x_3\in(-\infty,0)$ such that $|x|^{-\alpha}\le h(x)\le |x|^{-\beta}$ for all $x_3\le x<0$. Consider the function
	$$
	g(x)=-(2|x|^{\alpha})^{\frac1\beta},\quad -\infty<x<0.
	$$
Since $\alpha<\beta$, we have
	$$
	x-g(x)=-|x|+(2|x|^{\alpha})^{\frac1\beta}\sim(2|x|^{\alpha})^{\frac1\beta},\quad x\to0^-.
	$$
Therefore
	\begin{align*}
	h'_+(x)
	&\ge\frac{h(x)-h(g(x))}{x-g(x)}
	\ge\frac1{x-g(x)}\left(\frac1{|x|^\alpha}-\frac1{|g(x)|^\beta}\right)\\
	&=\frac1{2(x-g(x))}\frac1{|x|^\alpha}
	=\left(\frac1{2^{1+\frac1\beta}}+o(1)\right)\frac1{|x|^{\alpha\left(1+\frac1\beta\right)}},\quad x\to0^-.
	\end{align*}
It follows that
	$$
	\alpha(h'_+)\ge \alpha+\frac{\alpha}{\beta}.
	$$
Since $\alpha\in(0,\alpha(h))$ and $\beta\in(\beta(h),\infty)$ were arbitrary, we deduce (c).
\end{proof}

\begin{proof}[Proof of Proposition \ref{p:k_low_ord}]
Let $h(x)=\log M(e^x,f)$ for all $-\infty<x<0$. Then $h\in\Omega$, $\lambda_M(f)=\alpha(h)$, $\sigma_M(f)=\beta(h)$, $\lambda_*(f)=\alpha(h'_+)$ and $\sigma_*(f)=\beta(h'_+)$. Therefore the inequality $\lambda_*(f)\le\lambda_M(f)+1$ follows by Proposition \ref{fpr1}(a) if $\lambda_M(f)<\infty$, and for otherwise it is trivial.

Assume now $\sigma_M(f)>0$. If $\lambda_M(f)=\infty$, then $\lambda_*(f)=\infty$, and hence the inequality $\lambda_M(f)+\frac{\lambda_M(f)}{\sigma_M(f)}\le\lambda_*(f)$ is clearly valid. But if $\lambda_M(f)<\infty$, then the inequality follows from Proposition~\ref{fpr1}(c).
\end{proof}

The next proposition shows that both estimates for the quantity $\lambda_*(f)$ in Proposition~\ref{p:k_low_ord} are sharp.

\begin{proposition}\label{fpr3} Let $0<\lambda,\sigma<\infty$. Then the following assertions hold.
\begin{enumerate}
	\item[(a)] If $\lambda\le\sigma$, then there exists an analytic function $f$ in $\D$ such that 
$\lambda_M(f)=\lambda$, $\sigma_M(f)=\sigma$ and $\lambda_*(f)=\lambda_M(f)+1$.
	\item[(b)] If $\lambda<\sigma$, then there exists an analytic function $f$ in $\D$ such that $\lambda_M(f)=\lambda$, $\sigma_M(f)=\sigma$ and $\lambda_*(f)=\lambda_M(f)+\frac{\lambda_M(f)}{\sigma_M(f)}$.
\end{enumerate}
\end{proposition}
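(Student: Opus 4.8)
\emph{Strategy.} I would derive both parts from a construction of convex functions, via the dictionary already used in the proof of Proposition~\ref{p:k_low_ord}: if $h(x)=\log M(e^{x},f)$ for $-\infty<x<0$, then $h\in\Omega$ and $\lambda_M(f)=\alpha(h)$, $\sigma_M(f)=\beta(h)$, $\lambda_*(f)=\alpha(h'_+)$ (recall $1-r\sim-\log r$ as $r\to1^-$). Conversely, a convex $h\in\Omega$ which is piecewise linear in $x$ with integer slopes $d_{1}<d_{2}<\cdots$ satisfying a gap condition $d_{n+1}/d_{n}\ge q>1$ is realized by the Newton polygon recipe $f(z)=\sum_{n}b_{n}z^{d_{n}}$, where $\log b_{n}$ is chosen so that the line of slope $d_{n}$ through the $n$-th vertex supports the polygon; then $b_{n}>0$, $M(r,f)=f(r)$, $\log\mu(r,f)=h(\log r)$, and the gap condition gives $\log M(r,f)=\log\mu(r,f)+O(1)$. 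This immediately yields $\lambda_M(f)=\alpha(h)$ and $\sigma_M(f)=\beta(h)$. For $\lambda_*(f)$ one inequality is free: Proposition~\ref{p:k_low_ord} gives $\lambda_*(f)\ge\lambda_M(f)+\lambda_M(f)/\sigma_M(f)$ (this is the value wanted in (b)) and $\lambda_*(f)\le\lambda_M(f)+1$ (the value wanted in (a)). For the remaining inequality I would compare $K(r,f)=r(\log M(r,f))'_+$ with $\nu(r,f)=r(\log\mu(r,f))'_+=h'_+(\log r)$: since $\log M$ and $\log\mu$ are convex in $\log r$ and differ by $O(1)$, a chord estimate shows $K(r,f)\asymp\nu(r,f)$ on all but a negligible neighbourhood of each vertex of the polygon, so the $\alpha$-indicators of these two quantities coincide.

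\emph{Construction for (b), $\lambda<\sigma$.} Choose $a_{1}<a_{2}<\cdots<0$ with $a_{n}\to0$, recursively, so that $\log(1/|a_{n+1}|)=(\sigma/\lambda+\delta_{n})\log(1/|a_{n}|)$ with $\delta_{n}\downarrow0^{+}$, and let $h$ be the polygonal function with vertices $(a_{n},|a_{n}|^{-\lambda})$ (rounding the slopes to integers perturbs $h$ by $o(1)$ and is harmless). With $\theta_{n}:=\log(1/|a_{n+1}|)/\log(1/|a_{n}|)\to\sigma/\lambda$, the slope of $h$ on $[a_{n},a_{n+1}]$ is $c_{n}\asymp|a_{n+1}|^{-\lambda}/|a_{n}|$, so $c_{n+1}/c_{n}\to\infty$ and the gap condition holds. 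Then: (i) on $[a_{n},a_{n+1}]$ the graph of $h$ is a chord of the convex function $x\mapsto|x|^{-\lambda}$, hence $h(x)\ge|x|^{-\lambda}$ there, while $h(a_{n})=|a_{n}|^{-\lambda}$; this gives $\alpha(h)=\lambda$; (ii) on $[a_{n},a_{n+1}]$ one has $h(x)\le|a_{n+1}|^{-\lambda}$, so $\log h(x)/\log(1/|x|)\le\lambda\theta_{n}\to\sigma$, and by continuity the value $\sigma-\varepsilon$ is attained on each piece for large $n$; hence $\beta(h)=\sigma$; (iii) on $(a_{n},a_{n+1})$ one has $\log h'_{+}(x)/\log(1/|x|)=\log c_{n}/\log(1/|x|)\in[\lambda+\theta_{n}^{-1},\ \lambda\theta_{n}+1]$, and $\lambda+\theta_{n}^{-1}\to\lambda+\lambda/\sigma$; therefore $\alpha(h'_{+})=\lambda+\lambda/\sigma$.

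\emph{Construction for (a).} If $\lambda=\sigma$, take a polygonal approximation of $h(x)=|x|^{-\lambda}$; then $h'_{+}(x)=\lambda|x|^{-\lambda-1}$ and $\alpha(h'_{+})=\lambda+1=\lambda_M(f)+1$. If $\lambda<\sigma$, let $h'_{+}$ be a polygonal version of the ``floor'' $|x|^{-\lambda-1}$, interrupted by sparse ``boosts'': at points $v_{n}\to0^{-}$ the slope jumps up to $2|v_{n}|^{-\sigma-1}$, stays constant until the floor $|x|^{-\lambda-1}$ catches up again at $x'_{n}$ (so $|x'_{n}|=(|v_{n}|^{\sigma+1}/2)^{1/(\lambda+1)}$), and then resumes the floor. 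Since $h'_{+}(x)\ge|x|^{-\lambda-1}$ for every $x$ near $0$, we get $\alpha(h'_{+})\ge\lambda+1$, hence $\alpha(h'_{+})=\lambda+1$ by Proposition~\ref{fpr1}(a); integrating, $h(x)\gtrsim|x|^{-\lambda}$ everywhere, with equality attained along the floor intervals, so $\alpha(h)=\lambda$; and on a boost $h$ climbs from $\asymp|v_{n}|^{-\lambda}$ to $\asymp|v_{n}|^{-\sigma}$ while $|x|$ changes only by a bounded factor, forcing $\log h(x)/\log(1/|x|)\to\sigma$ (one checks this ratio never exceeds $\sigma$), so $\beta(h)=\sigma$. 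Choosing the $v_{n}$ sparse makes the slopes obey a gap condition.

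\emph{Main obstacle.} The combinatorial design of $h$ is routine; the real work is the realization step, namely showing that the growth of $h'_{+}$ (which governs $\nu(r,f)$) is faithfully reproduced by the right derivative of $\log M(r,f)$ (which governs $K(r,f)$) and is not blurred by the $O(1)$ discrepancy between $\log M$ and $\log\mu$. The key observation is that for $f(z)=\sum_{n}b_{n}z^{d_{n}}$ the transition of $(\log M(r,f))'_{+}$ from one slope $d_{n}$ to the next $d_{n+1}$ is confined to an interval of $\log r$ of length $O\!\big(c_{n+1}^{-1}\log(c_{n+1}/c_{n})\big)$, which is negligible next to the length of the corresponding piece of $h$; this is exactly what makes $\lambda_*(f)$ come out equal to $\alpha(h'_{+})$ rather than being pushed up. A possible alternative realization is to feed a suitable radial subharmonic function into the approximation machinery of Lemma~\ref{lemma:approx}, but the gap-series route makes the derivative bookkeeping most transparent.
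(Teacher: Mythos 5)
Your plan follows the paper's own route for part~(b) and for the case $\lambda=\sigma$ of part~(a): realize a prescribed convex function $h$ as $\log\mu(e^{x},f)$ of a gap power series with prescribed central index (the paper invokes Lemma~\ref{flm1} of Filevych; your ``Newton polygon recipe'' is the same device), then read off $\alpha(h)$, $\beta(h)$, $\alpha(h'_+)$. After relabelling, your vertices $(a_n,|a_n|^{-\lambda})$ with $\log(1/|a_{n+1}|)\approx(\sigma/\lambda)\log(1/|a_n|)$ and the resulting integer slopes coincide with the paper's $c_k=1-\delta^{q^{-k}}$, $n_{k+1}=[\delta^{-(\sigma+1)/q^{k}}]+1$, $q=\lambda/\sigma$. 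The one genuine divergence is how the crucial upper bound $\lambda_*(f)\le\lambda+\lambda/\sigma$ is obtained. You derive it from the convexity estimate $K(r)\le\nu(r)+O\big((\log(c_{n+1}/r))^{-1}\big)$ valid whenever $\log M-\log\mu=O(1)$, plus the observation that the liminf of $\log\nu(r)/\log\frac{1}{1-r}$ is already attained at points at distance $\gtrsim 1/\nu(r)$ (in $\log r$) from the next vertex. This is slick, but both pieces are substantive: $\log M=\log\mu+O(1)$ is \emph{not} automatic for gap series (it holds here because the vertex defects $(d_{n+1}-d_n)(a_{n+1}-a_n)$ grow superexponentially, but must be checked), and the ``safe distance'' point needs a short computation showing $1-r\sim 1-c_n$ at the chosen $r$. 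The paper avoids both by bounding $K(r_k,f)$ directly at $r_k=2c_k-1$ via an explicit tail estimate of the power series, which never uses $\log M-\log\mu$ at all; that is the substance your heuristic replaces.

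For part~(a) with $\lambda<\sigma$ your ``floor $+$ boost'' construction is a genuinely different route from the paper, and the paper's route is much shorter: take $f=f_1f_2$ where $f_1$ has $\lambda_M(f_1)=\sigma_M(f_1)=\lambda$, $\lambda_*(f_1)=\lambda+1$ (the regular case just proved) and $f_2$ is the function from~(b); since both have nonnegative Taylor coefficients, $\log M$ and $K$ are additive, the $f_1$-factor forces $\lambda_*(f)\ge\lambda+1$, and Proposition~\ref{p:k_low_ord} gives the reverse inequality. I would recommend that trick. If you keep your construction, you should address three gaps. First, the floor segments are not Hadamard gap polygons (the slopes $\asymp|x|^{-\lambda-1}$ increase too slowly), so your gap-series realization recipe does not apply to them verbatim and a separate (standard) argument is needed there. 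Second, your claim that ``$|x|$ changes only by a bounded factor'' during a boost is false: $|x|$ drops from $|v_n|$ to $|x'_n|\asymp|v_n|^{(\sigma+1)/(\lambda+1)}$, an unbounded factor; the ratio $\log h/\log(1/|x|)\to\sigma$ is achieved only on an initial portion $|x|\in[|v_n|/2,|v_n|]$, and one still has to verify it stays $\le\sigma+o(1)$ on the remainder of the boost and on the catching-up interval. Third, since $h(x'_n)\asymp|v_n|^{-\sigma}$ overshoots the floor value $|x'_n|^{-\lambda}\asymp|v_n|^{-\lambda(\sigma+1)/(\lambda+1)}$, the function $h$ does not return to $\asymp|x|^{-\lambda}$ until $|x|\ll|v_n|^{\sigma/\lambda}$, so the sparsity requirement $|v_{n+1}|\ll|v_n|^{\sigma/\lambda}$ on the boost points must be stated explicitly to secure $\alpha(h)=\lambda$.
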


For the proof we need the following lemma.

\begin{oldlemma}[\protect{\cite{Filevych3}}]\label{flm1} Let $\{n_k\}_{k=0}^\infty$ be an increasing sequence of nonnegative integers and $\{c_k\}_{k=0}^\infty$ an increasing sequence of positive numbers such that $\lim_{k\to\infty}c_k=1$. Let $\{a_n\}_{n=0}^\infty$ be a sequence of complex numbers such that $a_0=\dots=a_{n_0-1}=0$, $a_{n_0}\ne0$,
	$$
	|a_{n_{k+1}}|=|a_{n_0}|\prod_{j=0}^k c_j^{n_{j}-n_{j+1}},\quad k\in\N\cup\{0\},
	$$
and $|a_n|\le|a_{n_k}|c_k^{n_k-n}$ for all $k\in\N\cup\{0\}$ and $n_k<n<n_{k+1}$. Then the power series
$f(z)=\sum_{n=0}^\infty a_nz^n$ represents an analytic function in $\mathbb{D}$ such that
	$$
	\nu(r,f)=\begin{cases}
	n_0, & 0<r<c_0,\\
	n_{k+1}, & c_k\le r<c_{k+1},\quad k\in\N\cup\{0\}.\\
	\end{cases}
	$$
\end{oldlemma}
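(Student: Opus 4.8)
The plan is to read the coefficient hypotheses as prescribing a piecewise log-linear majorant for the sequence $\log|a_n|$ whose vertices sit at the indices $n_k$ --- in other words, to identify the Newton polygon of $f$ --- and to extract both the analyticity of $f$ in $\D$ and the shape of $\nu(r,f)$ from this polygon. Throughout I write $\psi_r(m)=\log|a_m|+m\log r$, with the convention $\psi_r(m)=-\infty$ precisely when $a_m=0$, i.e.\ for $m<n_0$; recall that $\{c_k\}$ increasing to $1$ means $0<c_0<c_1<\dots<1$. A direct consequence of the product relations is $|a_{n_{j+1}}|=|a_{n_j}|\,c_j^{\,n_j-n_{j+1}}$ for every $j\ge0$, and combined with $|a_n|\le|a_{n_j}|\,c_j^{\,n_j-n}$ for $n_j<n<n_{j+1}$ this gives
\begin{equation*}
\psi_r(m)\le\psi_r(n_j)+(m-n_j)\log\frac{r}{c_j},\qquad n_j\le m\le n_{j+1},
\end{equation*}
with equality at both endpoints $m=n_j$ and $m=n_{j+1}$. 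Thus the sign of $\log(r/c_j)$, hence the location of $r$ relative to $c_j$, controls whether the affine majorant on $[n_j,n_{j+1}]$ increases or decreases.

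Analyticity comes almost for free. Fix $r\in(0,1)$. If $r<c_0$ then $r<c_j$ for all $j$, the majorant decreases on every $[n_j,n_{j+1}]$, and the chain $\psi_r(n_0)>\psi_r(n_1)>\cdots$ yields $\psi_r(m)\le\psi_r(n_0)$ for all $m$. Otherwise let $k\ge0$ be the unique index with $c_k\le r<c_{k+1}$: then $c_j<r$ for $j<k$ forces $\psi_r(n_0)<\cdots<\psi_r(n_k)\le\psi_r(n_{k+1})$, so $\psi_r(m)\le\psi_r(n_{k+1})$ on $[n_j,n_{j+1}]$ for $j\le k$; while $c_j>r$ for $j>k$ forces $\psi_r(n_{k+1})>\psi_r(n_{k+2})>\cdots$, so $\psi_r(m)\le\psi_r(n_j)\le\psi_r(n_{k+1})$ on $[n_j,n_{j+1}]$ for $j\ge k+1$. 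In every case $\sup_m(\log|a_m|+m\log r)<\infty$ for each $r\in(0,1)$, hence $\sum_m|a_m|\rho^m<\infty$ for each $\rho\in(0,1)$ and $f$ is analytic in $\D$; in particular $|a_m|r^m\to0$, so $\mu(r,f)$ is a genuine maximum and $\nu(r,f)$ is well defined.

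For the central index I would now, with $r$ and $k$ as above (the range $0<r<c_0$ being identical with $n_0$ in place of $n_{k+1}$), sharpen the above to pin down the maximizers of $\psi_r$. For $j<k$ the increasing affine majorant on $[n_j,n_{j+1}]$ is attained only at the right endpoint, so $\psi_r(m)<\psi_r(n_{j+1})\le\psi_r(n_{k+1})$ for $n_j\le m<n_{j+1}$; for $j>k$ the decreasing majorant is attained only at the left endpoint, so $\psi_r(m)<\psi_r(n_j)\le\psi_r(n_{k+1})$ for $n_j<m\le n_{j+1}$, and in particular $\psi_r(m)<\psi_r(n_{k+1})$ for every $m>n_{k+1}$; and on $[n_k,n_{k+1}]$ the bound $\psi_r(m)\le\psi_r(n_k)+(m-n_k)\log(r/c_k)$ gives $\psi_r(m)\le\psi_r(n_{k+1})$ throughout. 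Consequently $\log\mu(r,f)=\max_m\psi_r(m)=\psi_r(n_{k+1})$, and every index exceeding $n_{k+1}$ is strictly below this value, so $\nu(r,f)=\max\{m:\psi_r(m)=\log\mu(r,f)\}=n_{k+1}$, which is the assertion.

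The whole argument is bookkeeping on the Newton polygon of $\{|a_n|\}$, so I do not expect a genuine obstacle. The one delicate point is the left edge $r=c_k$: there $\log(r/c_k)=0$, hence $\psi_r(n_k)=\psi_r(n_{k+1})$ and one cannot exclude ties $\psi_r(m)=\psi_r(n_{k+1})$ for some $m$ with $n_k\le m<n_{k+1}$. This is precisely why the proof must invoke that $\nu(r,f)$ is by definition the \emph{largest} maximizing index, together with the strict inequality $\psi_r(m)<\psi_r(n_{k+1})$ for all $m>n_{k+1}$, in order still to conclude $\nu(r,f)=n_{k+1}$; one must also keep track of the indices $m<n_0$, where $a_m=0$ and $\psi_r(m)=-\infty$ trivially.
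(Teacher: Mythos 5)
The paper does not prove this lemma at all: it is quoted directly from Filevych's article \cite{Filevych3} and used as a black box (in the proof of Proposition~\ref{fpr3}). There is therefore no ``paper's own proof'' to compare against. Judged on its own merits, your argument is correct and complete. Rewriting the hypotheses in terms of $\psi_r(m)=\log|a_m|+m\log r$ is exactly the right move: the product relation forces equality $\psi_r(n_{j+1})=\psi_r(n_j)+(n_{j+1}-n_j)\log(r/c_j)$ at consecutive vertices, the inequality on intermediate indices gives the affine upper bound on each block $[n_j,n_{j+1}]$, and the sign of $\log(r/c_j)$ determines whether that bound rises or falls. From this, $\sup_m\psi_r(m)<\infty$ for each $r\in(0,1)$ (hence analyticity in $\D$), the maximum of $\psi_r$ sits at $n_{k+1}$ when $c_k\le r<c_{k+1}$ (resp.\ at $n_0$ when $r<c_0$), and every index above $n_{k+1}$ lies strictly below the maximum. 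You also correctly flagged the only delicate spot: at $r=c_k$ one has $\psi_r(n_k)=\psi_r(n_{k+1})$ and possible further ties in $[n_k,n_{k+1}]$, which is harmless precisely because $\nu(r,f)$ is defined as the \emph{largest} maximizing index and everything beyond $n_{k+1}$ is strictly smaller. The one thing worth adding explicitly is that the slopes $-\log c_j$ are strictly decreasing in $j$ (since $c_j\uparrow 1$), so the majorant you built is genuinely the concave Newton polygon of $\{\log|a_n|\}$; this makes the monotonicity of the chains $\psi_r(n_0)<\dots<\psi_r(n_{k+1})>\psi_r(n_{k+2})>\dots$ transparent rather than something to re-derive block by block.
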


\begin{proof}[Proof of Proposition \ref{fpr3}] 
First, we prove (a) in the case $\lambda=\sigma$. To do this, we set $n_k=k$ for all $k\in\N\cup\{0\}$, and define
	$$
	c_k=1-\left(\frac{\sigma}{k+\sigma+1}\right)^{\frac1{\sigma+1}},\quad k\in\N\cup\{0\}.
	$$
Then $c_k\in(0,1)$ for all $k$ and $\lim_{k\to\infty}c_k=1$. Further, let $a_0\not=0$ be fixed, and 
$a_{k+1}=a_0\prod_{j=0}^k c_j^{-1}$ for all $k\in\N\cup\{0\}$. Then Lemma~\ref{flm1} ensures that the 
power series $f(z)=\sum_{n=0}^\infty a_nz^n$ represents an analytic function in $\mathbb{D}$ such that $\nu(r,f)=k+1=\frac{\sigma}{(1-c_k)^{\sigma+1}}-\sigma$ for all $c_k\le r<c_{k+1}$ and $k\in\N\cup\{0\}$. It follows that
	$$
	\nu(r,f)\sim\frac{\sigma}{(1-r)^{\sigma+1}},\quad r\to1^-,
	$$
which implies
	$$
	\log\mu(r,f)\sim\frac1{(1-r)^{\sigma}},\quad r\to1^-.
	$$
Since
	$$
	\liminf_{r\to1^-}\frac{\log^+\log^+\mu(r,f)}{\log\frac1{1-r}}
	= \lambda_M(f)
	\le\sigma_M(f)
	=\limsup_{r\to1^-}\frac{\log^+\log^+\mu(r,f)}{\log\frac1{1-r}},
	$$
we deduce $\sigma\le\lambda_M(f)\le\sigma_M(f)=\sigma$. Consequently, Proposition~\ref{p:k_low_ord} yields 
$\lambda_*(f)=\lambda_M(f)+1$. 

We next prove (b). Let $q=\lambda/\sigma\in(0,1)$, fix $\delta\in(0,1)$ such that (further restrictions for $\delta$ apply later)
\begin{equation}\label{feq4}
\frac1{x^{\sigma+1}}+1\le\frac1{x^{\frac{\sigma+1}{q}}},\quad 0<x\le\delta,
\end{equation}
and define $c_k=1-\delta^{q^{-k}}$ for all $k\in\N\cup\{0\}$. Then $c_k\in(0,1)$ for all $k\in\N\cup\{0\}$, and $\lim_{k\to\infty}c_k=1$. Moreover,
	\begin{equation}\label{feq5}
	(1-c_{k+1})^{\lambda+q}=\delta^{\frac{\lambda+q}{q^{k+1}}}=\delta^{\frac{\sigma+1}{q^{k}}}=(1-c_{k})^{\sigma+1},\quad k\in\N\cup\{0\}.
	\end{equation}
Set $n_0=0$ and
	\begin{equation}\label{feq6}
	n_{k+1}=\left[ \frac{1}{\delta^{\frac{\sigma+1}{q^{k}}}} \right]+1,\quad k\in\N\cup\{0\}.
	\end{equation}
Since $\delta^{q^{-k}}\le\delta$, \eqref{feq4} ensures that $\{n_k\}_{k=0}^\infty$ is increasing.
Define $a_0=1$,
	$$
	a_{n_{k+1}}=\prod_{j=0}^k c_j^{n_{j}-n_{j+1}},\quad k\in\N\cup\{0\},
	$$
and $a_n=0$ for all $n\in(n_k,n_{k+1})$ with $k\in\N\cup\{0\}$. Then Lemma~\ref{flm1} implies that the power series $f(z)=\sum_{n=0}^\infty a_nz^n$ represents an analytic function in $\mathbb{D}$ such that $\nu(r,f)=n_{k+1}$ for all $c_k\le r<c_{k+1}$ and $k\in\N\cup\{0\}$. Thus, by \eqref{feq5} and \eqref{feq6},
	$$
	\limsup_{r\to1^-}\frac{\log\nu(r,f)}{\log\frac1{1-r}}=\sigma+1,\quad
	\liminf_{r\to1^-}\frac{\log\nu(r,f)}{\log\frac1{1-r}}=\lambda+q=\lambda+\frac{\lambda}{\sigma}.
	$$
By \cite[Theorem~1.5.2]{JK}, we deduce $\sigma_M(f)=\sigma$.
Using~\eqref{eq:twostars}, we see that the function $h(x)=\log\mu(e^x,f)$ belongs to $\Omega$.
By~\eqref{eq:onestar} and the fact that $\log(1/t) \sim 1-t$ as $t\to 1^-$, we find that
$\alpha(h) = \lambda_M(f)$, $\beta(h)=\sigma_M(f)$ and
\begin{equation*}
\alpha(h_+') = \limsup_{t\to 1^-} \frac{\log \nu(t,f)}{\log\frac{1}{1-t}}.
\end{equation*}
Hence, by Proposition~~\ref{fpr1}(c) and the fact that $\sigma_M(f)=\sigma$, we conclude
\begin{equation*}
\lambda_M(f) + \frac{\lambda_M(f)}{\sigma_M(f)} \leq \alpha(h_+') = \lambda + \frac{\lambda}{\sigma_M(f)},
\end{equation*}
from which $\lambda_M(f) \leq \lambda$. Note that, similarly as above, the identity $\sigma_M(f)=\sigma$ can be proved
alternatively by using Proposition~\ref{fpr1}(b).

We next show that $\lambda_M(f)\ge\lambda$. First observe that $\mu(r,f)\ge\mu(0,f)=a_0=1$ for all $0\le r<1$, and hence
	\begin{align*}
	\log\mu(c_{k},f)& =\log\mu(c_{k-1},f)+\int_{c_{k-1}}^{c_{k}}\frac{\nu(t,f)}{t} \, dt\\
	& \ge\int_{c_{k-1}}^{c_{k}}\frac{\nu(t,f)}{t}\, dt
	=n_{k}\log\frac{c_{k}}{c_{k-1}},\quad k\in\N,
	\end{align*}
where $n_k\sim(1-c_k)^{-q(\sigma+1)}$ as $k\to\infty$ and
	\begin{equation}\label{feq7}
	\begin{split}
	\log\frac{c_{k}}{c_{k-1}}
	&\sim(c_{k}-c_{k-1})
	=(1-c_k)^\frac{\lambda+q}{\sigma+1}-(1-c_k)\\
	&=(1-c_{k})^q(1-(1-c_k)^{1-q})
	\sim(1-c_{k})^q,\quad k\to\infty,
	\end{split}
	\end{equation}
by \eqref{feq5} and \eqref{feq6}. It follows that
	\begin{equation}\label{feq8}
	\log\mu(c_k,f)
	\ge \big(1+o(1)\big ) \frac1{(1-c_{k})^{\lambda}},\quad k\to\infty.
	\end{equation}
By~\eqref{feq8}, we obtain
\begin{align*}
\log\mu(r,f) & = \log\mu(c_k,f) + \int_{c_k}^r \frac{\nu(t,f)}{t} \, dt
 = \log \mu(c_k,f) + n_{k+1} \log\frac{r}{c_k}\\
& \gtrsim \frac1{(1-c_{k})^{\lambda}} + \frac{1}{\delta^\frac{\sigma+1}{q^k}} \, ( r - c_k )
 \gtrsim \frac{1}{(1-r)^\lambda} \, h_k(r),
\quad r\in [c_k,c_{k+1}],
\end{align*}
where
\begin{equation*}
h_k(r) = \left( \frac{1-r}{1-c_k} \right)^\lambda + \frac{(1-r)^\lambda (r-c_k)}{(1-c_k)^{\sigma+1}}.
\end{equation*}
Note that $h_k(c_k) = 1$ and $h_k(c_{k+1}) \sim 1$ as $k\to\infty$. Now $h_k'$ has precisely one zero on $[0,1)$
and $(h_k)'(c_k)>0$ for all $k$ large enough, since $\lambda<\sigma$.
We conclude that $h_k$ is uniformly bounded away from zero on $[c_k,c_{k+1}]$,
and therefore
\begin{align*}
\log\mu(r,f) \gtrsim \frac{1}{(1-r)^\lambda}, \quad r\in [c_k,c_{k+1}].
\end{align*}
As $\bigcup_{k=1}^\infty [c_k, c_{k+1}] = [c_1,1)$, it follows that $\lambda_M(f)\geq \lambda$.
Since we have already shown that $\lambda_M(f)\le\lambda$, we deduce $\lambda_M(f)=\lambda$.

We have shown that $\sigma_M(f)=\sigma$ and $\lambda_M(f)=\lambda$. Therefore the second part of Proposition~\ref{p:k_low_ord}  yields $\lambda+q=\lambda_M(f)+\frac{\lambda_M(f)}{\sigma_M(f)}\le\lambda_*(f)$. Thus, to obtain $\lambda_*(f)=\lambda+q$, it remains to show that $\lambda_*(f)\le\lambda+q$. To do this, let $r_k=2c_k-1=1-2\delta^{q^{-k}}$ for all $k\in\N\cup\{0\}$. Then $r_k\to1^-$ as $k\to\infty$, and
	$$
	\log\frac{c_{k}}{r_k}\sim(c_{k}-r_{k})=1-c_{k}, \quad k\to\infty.
	$$
This together with \eqref{feq5} and \eqref{feq6} yields
	\begin{equation*}
	\begin{split}
	\log n_{k+1}+(n_{k+1}-n_k)\log\frac{r_k}{c_{k}}
	&\sim(\sigma+1)\log\frac1{1-c_k}\\
	&\quad-\left(\frac1{(1-c_k)^{\sigma+1}}-\frac1{(1-c_k)^{q(\sigma+1)}}\right)\left(1-c_k\right)\\
	&\sim-\frac1{(1-c_k)^{\sigma}}=-\frac1{\delta^{\sigma q^{-k}}},\quad k\to\infty.
	\end{split}
	\end{equation*}
Therefore there exists $K_1\in\N$ such that
	\begin{equation}\label{pppl1}
	n_{k+1}\left(\frac{r_k}{c_{k}}\right)^{n_{k+1}-n_k}<\exp\left\{-\frac1{2\delta^{\sigma q^{-k}}}\right\},\quad k\ge K_1.
	\end{equation}
As above, \eqref{feq5}, \eqref{feq6} and \eqref{feq7} yield
	$$
	\log n_{k+1}+(n_{k+1}-n_k)\log\frac{c_{k-1}}{c_{k}}
	\sim-\frac1{(1-c_k)^{\sigma+1-q}}
	=-\frac1{\delta^{(\sigma+1-q)q^{-k}}},\quad k\to\infty.
	$$
Therefore there exists $K_2\in\N$ such that
\begin{equation}\label{pppl2}
\begin{split}
n_{k+1}\left(\frac{c_{k-1}}{c_{k}}\right)^{n_{k+1}-n_k}
&<\exp\left(-\frac1{2\delta^{(\sigma+1-q)q^{-k}}}\right)\\
&<\exp\left(-\frac1{2\delta^{\sigma q^{-k}}}\right),\quad k\ge K_2.
\end{split}
\end{equation}
By applying the trivial inequality $r_k<c_k$, \eqref{pppl1}, \eqref{pppl2}, the fact that the sum
	$$
	\sum_{m=1}^{\infty}\exp\left(-\frac1{\delta^{\sigma q^{1-m}}}\right)
	$$
converges and the trivial estimate $c_k<c_j$ for $j=k+1,\ldots,m-2$, we deduce
	\begin{equation*}
	\begin{split}
	\sum_{m=k+1}^{\infty} n_ma_{n_m}r_k^{n_m}
	&=a_{n_k}r_k^{n_k}\sum_{m=k+1}^{\infty} n_m\frac{a_{n_m}}{a_{n_k}}r_k^{n_m-n_k}\\
	& \leq a_{n_k}r_k^{n_k}\sum_{m=k+1}^{\infty} n_m\prod_{j=k}^{m-1}\frac{r_k^{n_{j+1}-n_j}}{c_j^{n_{j+1}-n_j}}\\
	&\le a_{n_k}r_k^{n_k}\left(n_{k+1}\left(\frac{r_k}{c_{k}}\right)^{n_{k+1}-n_k}+\sum_{m=k+2}^{\infty} n_m
	\left(\frac{c_{m-2}}{c_{m-1}}\right)^{n_{m}-n_{m-1}}\right)\\
	&<a_{n_k}r_k^{n_k}\sum_{m=k+1}^{\infty}\exp\!\left(-\frac1{\delta^{\sigma q^{1-m}}}\right)<a_{n_k}r_k^{n_k},\quad k\ge\max\{K_1,K_2\}.
	\end{split}
	\end{equation*}
The last estimate is justified as follows. If $\delta\in (0,1)$, then
\begin{equation*}
\begin{split}
\sum_{m=1}^{\infty}\exp\left(-\frac1{\delta^{\sigma q^{1-m}}}\right)
 & \leq \sum_{m=1}^\infty \delta^{\frac{\sigma q}{q^m}} = \sum_{m=1}^\infty \exp\left( - \frac{\sigma q}{q^m} \log\frac{1}{\delta} \right)\\
& \leq \frac{1}{\sigma q \log(1/\delta)} \, \sum_{m=1}^\infty q^m
 = \frac{1}{\sigma (1-q) \log(1/\delta)},
\end{split}
\end{equation*}
where the trivial inequality $\exp(-1/x)\leq x$, for $x\geq 0$, has been used two times. Therefore, if 
$\delta<\exp(-1/(\sigma(1-q)))$, then
\begin{equation*}
\sum_{m=1}^{\infty}\exp\left(-\frac1{\delta^{\sigma q^{1-m}}}\right) < 1.
\end{equation*}
Consequently,
	$$
	r_kf'(r_k)=\sum_{m=0}^{k} n_ma_{n_m}r_k^{n_m}+\sum_{m=k+1}^{\infty} n_ma_{n_m}r_k^{n_m}<(n_k+1)\sum_{m=0}^{k} a_{n_m}r_k^{n_m},
	$$
and hence
	$$
	K(r_k,f)=r_k\frac{f'(r_k)}{f(r_k)}
	<\frac{(n_k+1)\sum_{m=0}^{k} a_{n_m}r_k^{n_m}}{\sum_{m=0}^{k} a_{n_m}r_k^{n_m}}=n_k+1,\quad k\ge\max\{K_1,K_2\}.
	$$
This together with~\eqref{feq5} and \eqref{feq6} finally yields
	$$
	\lambda_*(f)
	\le\liminf_{k\to\infty}\frac{\log K(r_k,f)}{\log\frac1{1-r_k}}
	\le\liminf_{k\to\infty}\frac{\log n_k}{\log\frac1{2(1-c_k)}}=\lambda+q=\lambda_M(f)+\frac{\lambda_M(f)}{\sigma_M(f)}.
	$$
Thus $\lambda_*(f)=\lambda_M(f)+\frac{\lambda_M(f)}{\sigma_M(f)}$. This finishes the proof of (b).

To complete the proof of Proposition~\ref{fpr3}, we need to establish (a) in the case $\lambda<\sigma$. But we know by the constructions above that there exist analytic functions $f_1$ and $f_2$ in $\mathbb{D}$ with nonnegative Taylor coefficients such that $\lambda_M(f_1)=\sigma_M(f_1)=\lambda$, $\lambda_*(f_1)=\sigma_*(f_1)=\lambda_M(f_1)+1$, $\lambda_M(f_2)=\lambda$, $\sigma_M(f_2)=\sigma$ and $\lambda_*(f_2)=\lambda+\frac{\lambda}{\sigma}$. The function $f=f_1f_2$ is analytic in $\mathbb{D}$ and satisfies $\lambda_M(f)=\lambda$, $\sigma_M(f)=\sigma$ and  $\lambda_*(f)=\lambda+1$.
\end{proof}

\section{Proof of Theorem~\ref{TheoremLogDeriv}} \label{sec:TLD}

The proof of Theorem \ref{TheoremLogDeriv} is based on an approach from \cite{CHR2010}.
We only prove the difficult case $\lambda_M(f)<\sigma_M(f)$ as the proof of the case $\lambda_M(f) = \sigma_M(f)$
follows from \cite[Corollary~1.3]{CHR2010}, which gives a~logarithmic derivative estimate
that can be extended to a~radial set of upper density one by similar considerations as below.

%%%%%%%%%%%%%%%%%%%%%%%%%%
%%%% ---- SUBSECTION ---- %%%%
%%%%%%%%%%%%%%%%%%%%%%%%%%

\subsection{Preparations}\label{prep-sec}

We begin with a growth lemma, which originates from \cite{Linden1956} and associates the order 
of growth with the lower order of growth.
	
\begin{lemma}\label{l:i_est}
Let $f$ be an analytic function in $\D$ such that $0\le\lambda_M(f)<\sigma_M(f)<\infty$. For $1/2\le\alpha<1$, 
$m\in\N\cup\{0\}$ and arbitrary $0\le R_0<1$, define the nondecreasing function $I_{\alpha,m}:[ 0,1)\to[0,\infty)$ by
		\begin{equation*}%\label{e:ir_def}
		I_{\alpha,m}(R)=\frac{1}{(1-R)^{\frac1\a}}\left(\int_0^R\log^+M\!\left(t,f^{(m)}\right)(R-t)^{\frac1\a-1}\,dt+\log^+M\!\left(R_0,f^{(m)}\right)\right).
		\end{equation*}
Let $\e>0$ and $j\in\N\cup\{0\}$. Then there exist $\alpha=\alpha(\e,f)\in[1/2,1)$ large enough, $\eta=\eta(\e,f)>0$ small enough, and an increasing sequence $\{R_n\}_{n=1}^\infty=\{R_n(\e,f,j)\}_{n=1}^\infty$ of numbers in $(R_0,1)$ tending to 1 such that the following statements are valid:
		\begin{itemize}
			\item[\textnormal{(i)}] $\log M(R_n,f^{(j)})\le (1-R_n)^{-\lambda_M(f)-\frac\eta2}$ for all $n\in\N$;
			\item[\textnormal{(ii)}] The set $E=\bigcup_{n=1}^\infty[R_n^\star, R_n]$, where $(1-R_n^\star)^{\lambda_M(f)+\eta}=(1-R_n)^{\lambda_M(f)+\eta/2}$ for all $n\in\N$, satisfies $\DU(E)=1$ and
			\begin{equation}\label{e:ir_est}
			I_{\alpha,m}(R)\lesssim\frac{1}{(1-R)^{1+\left(\lambda_M(f)-\frac{\lambda_M(f)}{\sigma_M(f)}\right)^++\varepsilon}},\quad R\in E,\quad m=0,\ldots,j.
			\end{equation}
		\end{itemize}
	\end{lemma}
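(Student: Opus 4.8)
Throughout, write $\lambda=\lambda_M(f)$ and $\sigma=\sigma_M(f)$, so $0\le\lambda<\sigma<\infty$ by hypothesis. The plan is to take $\{R_n\}$ among the radii where $\log M(\cdot,f^{(j)})$ is small, and then to estimate the Riemann--Liouville-type integral in $I_{\alpha,m}$ by splitting it at the crossover of two competing bounds for $\log^+M(t,f^{(j)})$. First I would record the standard reductions: from $M(t,f^{(m)})\le|f^{(m)}(0)|+t\,M(t,f^{(m+1)})$ and the Cauchy estimate $M(t,f^{(m+1)})\le\frac2{1-t}M(\tfrac{1+t}2,f^{(m)})$ one gets, for every $m$, that $\sigma_M(f^{(m)})=\sigma$ and $\lambda_M(f^{(m)})=\lambda$, and a constant $C=C(f,j)$ with $\log^+M(t,f^{(m)})\le\log^+M(t,f^{(j)})+C$ for $0\le m\le j$ and $t\in[0,1)$. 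Since $\int_0^R(R-t)^{1/\alpha-1}\,dt=\alpha R^{1/\alpha}$, the constant $C$ together with the term $\log^+M(R_0,f^{(m)})$ contributes only $O((1-R)^{-1/\alpha})$ to $I_{\alpha,m}(R)$, which is $\le(1-R)^{-1-\varepsilon}$ once $\alpha\ge1/(1+\varepsilon)$; so it remains to bound $(1-R)^{-1/\alpha}\int_0^R\log^+M(t,f^{(j)})(R-t)^{1/\alpha-1}\,dt$ for $R\in E$.

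Now fix small parameters $\eta,\varepsilon'>0$ with $\eta<\sigma-\lambda$, and $\alpha\in[1/(1+\varepsilon),1)$, all to be tuned at the end in terms of $\varepsilon,\lambda,\sigma$ only. By $\sigma_M(f^{(j)})=\sigma$ there is $t_0\in(0,1)$ with $\log^+M(t,f^{(j)})\le(1-t)^{-\sigma-\varepsilon'}$ on $[t_0,1)$, and by $\lambda_M(f^{(j)})=\lambda$ there is an increasing sequence $R_n\to1^-$ in $(\max\{R_0,t_0\},1)$ with $\log M(R_n,f^{(j)})\le(1-R_n)^{-\lambda-\eta/2}$ --- this is statement (i), and we may also assume $R_0<R_n^\star$. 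With $R_n^\star$ as in the lemma, $1-R_n^\star=(1-R_n)^{\gamma}$ where $\gamma=\frac{\lambda+\eta/2}{\lambda+\eta}\in(0,1)$, so $R_n^\star<R_n$ and $\frac{m_1(E\cap[R_n^\star,1))}{1-R_n^\star}\ge\frac{R_n-R_n^\star}{1-R_n^\star}=1-(1-R_n)^{1-\gamma}\to1$, giving $\DU(E)=1$. Because $\log M(\cdot,f^{(j)})$ is nondecreasing, (i) also yields the crude bound $\log^+M(t,f^{(j)})\le(1-R_n)^{-\lambda-\eta/2}$ for all $t\in[0,R_n]$.

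The core step is the estimate of the integral for $R\in[R_n^\star,R_n]$. Let $w_n$ be the crossover radius of the two bounds, $1-w_n=(1-R_n)^{(\lambda+\eta/2)/(\sigma+\varepsilon')}$; since $\sigma+\varepsilon'>\lambda+\eta$ one has $t_0<w_n<R_n^\star$ for large $n$. Split $\int_0^R=\int_0^{t_0}+\int_{t_0}^{w_n}+\int_{w_n}^R$. The first piece is $O(1)$. On $[t_0,w_n]$ use $\log^+M(t,f^{(j)})\le(1-t)^{-\sigma-\varepsilon'}$ and $(R-t)^{1/\alpha-1}\le(1-t)^{1/\alpha-1}$ to get $\int_{t_0}^{w_n}(1-t)^{1/\alpha-1-\sigma-\varepsilon'}\,dt\lesssim1+(1-w_n)^{1/\alpha-\sigma-\varepsilon'}$. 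On $[w_n,R]\subset[w_n,R_n]$ use the crude bound and $R-w_n\le1-w_n$ to get $(1-R_n)^{-\lambda-\eta/2}\int_{w_n}^R(R-t)^{1/\alpha-1}\,dt\le\alpha(1-R_n)^{-\lambda-\eta/2}(1-w_n)^{1/\alpha}$. By the choice of $w_n$ both nontrivial terms equal $(1-R_n)^{\theta}$ with $\theta=(\lambda+\eta/2)\bigl(\frac1{\alpha(\sigma+\varepsilon')}-1\bigr)$, a constant satisfying $\theta\to-(\lambda-\lambda/\sigma)$ as $\alpha\to1$, $\eta,\varepsilon'\to0$. Hence, letting $\delta>0$ absorb the parameter errors (and a possible $\log$ factor in a boundary case), $I_{\alpha,j}(R)\lesssim(1-R)^{-1/\alpha}(1-R_n)^{-(\lambda-\lambda/\sigma)^+-\delta}$.

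To finish, note that for $R\in[R_n^\star,R_n]$ we have $1-R_n=(1-R)^{\beta}$ with $\beta\in[1,1/\gamma]$, so the bound is worst at $R=R_n^\star$, where the exponent of $1-R$ equals $\frac1\alpha+\frac{\lambda+\eta}{\lambda+\eta/2}\bigl((\lambda-\lambda/\sigma)^++\delta\bigr)$, which tends to $1+(\lambda-\lambda/\sigma)^+$ as $\alpha\to1$ and $\eta,\delta\to0$; fixing $\alpha$ close to $1$ and $\eta,\varepsilon',\delta$ small enough (in terms of $\varepsilon,\lambda,\sigma$) makes it $\le1+(\lambda-\lambda/\sigma)^++\varepsilon$, which is estimate (ii). When $\lambda=0$ or $\sigma\le1$ one has $(\lambda-\lambda/\sigma)^+=0$ and $\theta\ge-O(\eta)$, so the power of $1-R_n$ is harmless and only the $(1-R)^{-1/\alpha}$ term survives --- an easier case. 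The main obstacle is exactly this bookkeeping: the crude monotonicity bound on $[0,R_n]$ used alone gives only $I_{\alpha,j}(R_n^\star)\lesssim(1-R)^{-1/\alpha-\lambda-\eta}$, which is too weak; one genuinely has to pair it with the $\sigma$-bound on the outer part, and the exact exponent $1+(\lambda-\lambda/\sigma)^+$ appears only because the crossover radius $w_n$ is chosen to balance the two contributions.
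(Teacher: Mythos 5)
Your proposal is correct and follows essentially the same strategy as the paper's proof: pick $\{R_n\}$ witnessing the lower order so that (i) holds, verify $\DU(E)=1$ from $1-R_n^\star=(1-R_n)^\gamma$ with $\gamma<1$, and then split the integral at a crossover radius chosen so that the order bound on the outer piece and the (monotone-extended) lower-order bound on the inner piece produce the same power of $1-R_n$. The only cosmetic differences from the paper are that you index the split point by $n$ (as $w_n$) while the paper uses an $R$-dependent $R^\star$ with $(1-R^\star)^{\sigma-\eta}=(1-R)^{\lambda+\eta}$, and you write the outer exponent as $\sigma+\varepsilon'$ where the paper uses $\sigma+\eta$; these lead to the same limiting exponent $1+(\lambda_M(f)-\lambda_M(f)/\sigma_M(f))^+$ after tuning $\alpha,\eta,\varepsilon'$.
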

	
\begin{proof} The Cauchy integral formula yields
		$$
		M(r,f^{(k)})\le \frac{k! R\cdot M(R,f)}{(R-r)^k (R+r)}
		\le k!\,\frac{M(R,f)}{(R-r)^k},\quad 0<r<R<1,\quad k\in\N.
		$$
By choosing $R=(1+r)/2$ and taking logarithms we deduce
		$$
		\log M(r,f^{(k)})\le\log M\!\left(\frac{1+r}{2},f\right)+k\log\frac{2}{1-r}+ \log k!, \quad 0<r<1.
		$$
Taking into account the rough estimate
		\begin{equation}\label{relation}
		M(r,f)\le r^k M\!\left(r,f^{(k)}\right)+\sum_{l=0}^{k-1} r^l |f^{(l)}(0)|,\quad 0<r<1,
		\end{equation}
we deduce $\lambda_M(f^{(k)})=\lambda_M(f)$ and $\sigma_M(f^{(k)})=\sigma_M(f)$ for all $k\in\N$.
		
Let $R_0\in[0,1)$ and $j\in\N\cup\{0\}$. Further, let $\alpha\in[1/2,1)$ and $\eta>0$ to be fixed later. By the definition 
of the lower order there exists an increasing sequence $\{R_n\}_{n=1}^\infty=\{R_n(\eta,f,j)\}_{n=1}^\infty$ of
numbers in $(R_0,1)$ such that $\lim_{n\to\infty}R_n=1$ and
		\begin{equation}\label{a1}
		\log M(R_n,f^{(j)})\le (1-R_n)^{-\lambda_M(f)-\frac\eta2}, \quad n\in \mathbb{N}.
		\end{equation}
This proves (i). 

The inequalities \eqref{a1} and \eqref{relation} together yield
		\begin{equation}\label{relation2}
		\log M(R_n,f^{(m)})\le (1-R_n)^{-\lambda_M(f)-\frac\eta2}
		+\log^+\left(\sum_{l=0}^{j-1} |f^{(l)}(0)|\right), \quad n\in \mathbb{N},
		\end{equation}
for all $m\in\N\cup\{0\}$ with $m\le j-1$. Define $\{R_n^\star\}_{n=1}^\infty=\{R_n^\star(R_n)\}_{n=1}^\infty$ by 
$$(1-R_n^\star)^{\lambda_M(f)+\eta}=(1-R_n)^{\lambda_M(f)+\eta/2}, \quad n\in\N,$$
and set $E=\bigcup_{n=1}^\infty[R_n^\star,R_n]$ as in the statement. Then obviously $R_n^\star\to1^-$ as $n\to\infty$, and hence
		\begin{equation} \label{e:dens_e=1}
		1\ge\DU(E)\ge\lim_{n\to\infty}\frac{R_n-R_n^\star}{1-R_n^\star}=1-\lim_{n\to\infty}(1-R_n^\star)^{\frac{\eta}{2\lambda_M(f)+\eta}}=1.
			\end{equation}		
Since $M(R,f^{(j)})$ is nondecreasing, \eqref{a1} and the definition of $R_n^\star$ imply
		\begin{equation*}
		\begin{split}
		\log M\!\left(t,f^{(j)}\right)
		&\le\frac1{(1-R_n)^{\lambda_M(f)+\frac{\eta}{2}}}
		=\frac1{(1-R_n^\star)^{\lambda_M(f)+\eta}}\\
		&\le\frac1{(1-t)^{\lambda_M(f)+\eta}},\quad R_n^\star\le t\le R_n,\quad n\in\N.
		\end{split}
		\end{equation*}
Moreover, \eqref{a1} (the case $j=0$), \eqref{relation2} (the case $j\in\N$) and the definition of $R_n^\star$ yield 
		\begin{equation}\label{e:est_E}
		\log M(t,f^{(m)})\le\frac1{(1-t)^{\lambda_M(f)+\eta}}
		+\log^+\left(\sum_{l=0}^{j-1} |f^{(l)}(0)|\right),\quad R_n^\star\le t\le R_n,\quad n\in\N,
		\end{equation}
for all $m=0,\ldots,j$. Here the logarithmic term disappears for $j=0$.
		
Choose now $\eta=\eta(f,\alpha)>0$ such that
		\begin{equation}\label{Eq.eta}
		\eta<\min\left\{\frac1\alpha-1, \frac{\sigma_M(f)-\lambda_M(f)}{\max \{\sigma_M(f),2\}}\right\}.
		\end{equation}
Let $R\in E$, and define $R^\star$ by the condition $(1-R^\star)^{\sigma_M(f)-\eta}=(1-R)^{\lambda_M(f)+\eta}$. Since $\sigma_M(f)-\eta>\lambda_M(f)+\eta$ by \eqref{Eq.eta}, we have $0<R^\star<R<1$ and $1-R=o(1-R^\star)$, as $R\to 1^-$. By the definition of the order, we have
		$$
		\log^+M(t,f^{(m)})\lesssim(1-t)^{-(\sigma_M(f)+\eta)},\quad 0<r<1,\quad m=0,\ldots,j.
		$$
By using this, the monotonicity of $M\!\left(t,f^{(m)}\right)$ and \eqref{e:est_E}, we deduce
		\begin{equation*}
			\begin{split}
				& I_{\alpha,m}(R)(1-R)^\frac1{\alpha}\\
				& \qquad =\left(\int_0^{R^\star}+\int_{R^\star}^R\right)\log^+M\big(t,f^{(m)}\big)(R-t)^{\frac1\a-1}\,dt+\log^+M(R_0,f^{(m)})\\
				& \qquad \lesssim\int_0^{R^\star}\frac{(R-t)^{\frac1\alpha-1}}{(1-t)^{\sigma_M(f)+\eta}}\,dt
				+\frac1{(1-R)^{\lambda_M(f)+\eta}}\int_{R^\star}^R(R-t)^{\frac1\alpha-1}\,dt+1\\
				& \qquad \le\int_0^{R^\star} \frac1{(1-t)^{\sigma_M(f)+\eta-\frac 1\alpha +1}}\, dt
				+\frac{\alpha(R-R^\star)^{\frac1\alpha}}{(1-R)^{\lambda_M(f)+\eta}}+1\\
				& \qquad \lesssim\frac{{\bf 1}_f}{(1-R^\star)^{\sigma_M(f)+\eta-\frac 1\alpha}}+ \frac{(1-R^\star)^{\frac1\alpha}}{(1-R)^{\lambda_M(f)+\eta}}+1,
				\quad R\in E,
			\end{split}
		\end{equation*}
where ${\bf 1}_f=1$ if $\sigma_M(f)>1$ and zero otherwise. The definition of $R^\star$ now yields
		\begin{equation*}
			\begin{split}
				I_{\alpha,m}(R)
				&\lesssim\frac{{\bf 1}_f}{(1-R)^{\frac1\alpha+\frac{\lambda_M(f)+\eta}{\sigma_M(f)-\eta}\left(\sigma_M(f)+\eta-\frac1\alpha\right)}}
				+\frac1{(1-R)^{\lambda_M(f)+\eta+\frac1\alpha\left(1-\frac{\lambda_M(f)+\eta}{\sigma_M(f)-\eta}\right)}}\\
				&\qquad+\frac1{(1-R)^{\frac1\alpha}},\quad R\in E,\quad m=0,\ldots,j.
			\end{split}
		\end{equation*}
For a given $\varepsilon>0$, choose $\alpha=\alpha(\e,f)\in[1/2,1)$ close enough to 1 and $\eta=\eta(f,\alpha)$ sufficiently small such that $\frac1\alpha<1+\varepsilon$,
		\begin{equation*}
			\begin{split}
				\lambda_M(f)+\eta+\frac1\alpha\left(1-\frac{\lambda_M(f)+\eta}{\sigma_M(f)-\eta}\right)
				<1+\lambda_M(f)-\frac{\lambda_M(f)}{\sigma_M(f)}+\varepsilon
			\end{split}
		\end{equation*}
and
		\begin{equation*}
		\begin{split}
				\frac1\alpha+\frac{\lambda_M(f)+\eta}{\sigma_M(f)-\eta}\left(\sigma_M(f)+\eta-\frac1\alpha\right)
				<1+\lambda_M(f)-\frac{\lambda_M(f)}{\sigma_M(f)}+\varepsilon,\quad \sigma_M(f)>1.
		\end{split}
		\end{equation*}
Since $\eta=\eta(f,\alpha)$ and $\alpha=\alpha(\e,f)$, we deduce (ii).
\end{proof}
	
Let now $n(\zeta,h,f)$ denote the number of zeros of an analytic function $f$ in the 
closed disc $\overline{D}(\zeta,h)=\{w:|\zeta-w|\le h\}$.
Following Hayman~\cite{Hayman1952} and Linden~\cite{Linden1956} we define  
	$$
	u_m(z,h)=\log|f^{(m)}(z)|+N(z,h, f^{(m)}), \quad u(z,h)=u_0(z,h),
	$$
where $N(z,h,f)=\int_{0}^{h}\frac{n(z,t,f)}{t}\, dt$, $h\in(0,1-|z|)$ and $m\in\N\cup\{0\}$. Further, denote $I_\alpha=I_{\alpha,0}$ for short.

By applying \cite[Theorem~2]{Linden1956} to the function $f(Rz)$ at $\zeta/R$, for $|\zeta|<R$, 
we obtain the following result.

\begin{oldtheorem}[\protect{\cite[Theorem~2]{Linden1956}}]\label{TheoremLinden2}
Let $f$ be an~analytic function in $\D$ with $f(0)=1$,
$\a\in[1/2,1)$ and $\widetilde\eta\in(0,1/6)$. 
Then there exist constants $R_0=R_0(\a,f)\in(0,1)$ and $C=C(\a,\widetilde\eta,f)$ such that
		\begin{equation}\label{e:n}
		n(\zeta,h,f)\le\frac{C}{(R-r)^\frac1\a}\left(\int_0^R\log^+M(t,f)(R-t)^{\frac1\a-1}\,dt+\log^+M(R_0,f)\right)
		\end{equation}
and
		\begin{equation}\label{e:u_h_low}
		u(\zeta,h)\ge-\frac{C}{(R-r)^\frac1\a}\left(\int_0^R\log^+M(t,f)(R-t)^{\frac1\a-1}\,dt+\log^+M(R_0,f)\right),
		\end{equation}
where $|\zeta|=r<R$ and $h=\widetilde\eta(R-r)/R$.
\end{oldtheorem}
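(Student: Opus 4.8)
The plan is to deduce Theorem~\ref{TheoremLinden2} from Linden's original Theorem~2 of \cite{Linden1956} by a change of scale, as anticipated in the sentence preceding the statement. Linden's theorem supplies the bounds \eqref{e:n} and \eqref{e:u_h_low} in the ``boundary'' form in which the comparison radius equals the radius of analyticity: for $g$ analytic on a neighbourhood of $\overline{\D}$ with $g(0)=1$, the counting radius is a fixed small multiple of $1-|w_0|$, and the growth of $g$ enters only through $\int_0^1\log^+M(t,g)(1-t)^{1/\a-1}\,dt$ together with a reference term $\log^+M(\rho_0,g)$. (The lower bound \eqref{e:u_h_low} is natural because, by Jensen's formula, $u(\zeta,h)$ is the mean of $\log|f|$ over the circle $|w-\zeta|=h$, which cannot be very negative; this is not needed below.) Theorem~\ref{TheoremLinden2} is the extension of Linden's result to an arbitrary $f$ analytic on the open disc, obtained by running it on a concentric subdisc.

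Concretely, I would fix $0\le r<R<1$ and $\zeta$ with $|\zeta|=r$, and put $g(w)=f(Rw)$; since $\overline{D(0,R)}\subset\D$, this $g$ is analytic on $\overline{\D}$ with $g(0)=f(0)=1$, so Linden's theorem applies to $g$ at the point $w_0=\zeta/R$, where $|w_0|=r/R<1$ and $1-|w_0|=(R-r)/R$. Then transport the conclusion back to $f$. Because $M(t,g)=M(Rt,f)$, the substitution $s=Rt$ turns $\int_0^1\log^+M(t,g)(1-t)^{1/\a-1}\,dt$ into $R^{-1/\a}\int_0^R\log^+M(s,f)(R-s)^{1/\a-1}\,ds$, and this factor $R^{-1/\a}$ cancels the factor $R^{1/\a}$ hidden in $(1-|w_0|)^{-1/\a}=R^{1/\a}(R-r)^{-1/\a}$, leaving exactly the denominator $(R-r)^{1/\a}$ of the statement; the reference term becomes $\log^+M(R\rho_0,f)$, which is dominated by $\log^+M(R_0,f)$ once $R_0=R_0(\a,f)\in(0,1)$ is taken large enough (since $M(\cdot,f)$ is nondecreasing), the residual $R^{1/\a}\le1$ being harmless. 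Finally, $z\mapsto Rz$ maps the zeros of $g$ bijectively onto the zeros of $f$, so if $\delta$ is the counting radius with which Linden's theorem is invoked for $g$, then $n(w_0,\delta,g)=n(\zeta,R\delta,f)$, $N(w_0,\delta,g)=N(\zeta,R\delta,f)$, and, as $g(w_0)=f(\zeta)$, also $u(w_0,\delta,g)=u(\zeta,R\delta,f)$. Assembling these identities converts \eqref{e:n} and \eqref{e:u_h_low} for $g$ into the asserted inequalities for $f$, with $h=R\delta$ and $C=C(\a,\widetilde\eta,f)$.

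I expect the only genuine work to lie in the bookkeeping of this transfer: one must check that the $f$-plane counting radius $R\delta$ comes out to be the value $h=\widetilde\eta(R-r)/R$ recorded in the statement — which pins down with what parameter Linden's theorem should be applied to $g$ — and that the constant $C=C(\a,\widetilde\eta,f)$ and the reference radius $R_0=R_0(\a,f)$ can be chosen uniformly in $R$ and in $\zeta$, the latter being essentially the remark that $R\rho_0\le R_0$ for all $R<1$ once $R_0$ is large enough. Beyond these routine verifications there is no conceptual difficulty; all the analytic content is carried by Linden's theorem, which we use as a black box.
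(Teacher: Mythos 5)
Your proposal matches the paper's approach exactly: the paper itself states the result is obtained "by applying \cite[Theorem~2]{Linden1956} to the function $f(Rz)$ at $\zeta/R$, for $|\zeta|<R$," which is precisely your rescaling $g(w)=f(Rw)$, $w_0=\zeta/R$, followed by the change of variables and the observation that the factors $R^{\pm1/\alpha}$ cancel or are harmlessly bounded by $1$. The bookkeeping you spell out — the substitution $s=Rt$ in the integral, $(1-|w_0|)^{-1/\alpha}=R^{1/\alpha}(R-r)^{-1/\alpha}$, transport of zeros, and absorbing the reference term $\log^+M(R\rho_0,f)$ via $R_0=\rho_0$ — is exactly the routine verification the paper leaves implicit.
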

			
The estimates \eqref{e:n} and \eqref{e:u_h_low}, with $R=\frac{2r}{1+r}$, yield
		\begin{equation}\label{e:nglob}
		n(\zeta,\widetilde\eta(1-r)/2,f)
		\lesssim I_\alpha\!\left(\frac{2r}{1+r}\right), \quad 	
                u(\zeta,\widetilde\eta(1-r)/2) \gtrsim - I_\alpha\!\left(\frac{2r}{1+r}\right)
		\end{equation}
where $|\zeta|=r$ and $0<\widetilde\eta<1/6$. In fact, these estimates can be extended
for larger values of $\widetilde\eta$. We will only consider the extension of the first inequality;
the second extension is similar and hence omitted. Suppose that $\widetilde\eta_0\in(1/6,2)$ is fixed.
The disc $D(\zeta, \widetilde\eta_0(1-|\zeta|)/2)$ can be covered by a~finite number
of discs of the type 
$$D\big(z,\widetilde\eta (1-|z|)/2\big), \quad z\in D\big(\zeta, \widetilde\eta_0(1-|\zeta|)/2\big), $$
where $0<\widetilde\eta<1/6$ is fixed. This property follows from the fact that 
there are $N=N(\widetilde\eta_0, \widetilde\eta)$ smaller discs for which
\begin{equation*}
D\left(z,\widetilde\eta (1-|z|)/2\right)
\supset D\left(z,\frac{\widetilde\eta(2-\widetilde\eta_0)}{4} (1-|\zeta|)\right),
\quad z\in D\big(\zeta, \widetilde\eta_0(1-|\zeta|)/2\big),
\end{equation*}
and where the smaller discs of fixed radii cover the disc $D(\zeta, \widetilde\eta_0(1-|\zeta|)/2)$. Now, by~\eqref{e:nglob}
and the estimate $2r/(1+r)\leq (1+r)/2$, we deduce
\begin{equation} \label{eq:ld}
n\big(\zeta,\widetilde\eta_0(1-r)/2,f\big) 
\lesssim N \cdot I_\alpha\!\left(\frac{1 +  (r+\widetilde\eta_0(1-r)/2)}{2}\right)
\end{equation}
where $|\zeta|=r$.

	 Let $r_\nu=1-2^{-\nu}$ for all $\nu\in\N$. 
	Define $\A_1=\overline{D}(0,1/2)$ and $\A_\nu=\{\zeta:r_{\nu-1}<|\zeta|\le
	r_{\nu}\}$ for all $\nu\in\N\setminus\{1\}$, so that $\D=\bigcup_{\nu=1}^\infty\A_\nu$. 
	
We need an estimate for $$J(z,R):= \int_0^{2\pi} \frac {N(Re^{i\theta}, \frac{1-R}{16},f)}{|Re^{i\theta}-z|^2} \, d\theta.$$

\begin{lemma} \label{l:N_int_est} Let $f$ be an~analytic function in $\D$ with $f(0)=1$.
Then there exists a~constant $C=C(f)>0$ such that
	$$
	\int_{r_{\nu+1}}^{r_{\nu+2}} J(z,R) \, dR \le C \, I_\alpha(r_{\nu+4}),
\quad z\in\A_\nu, \quad \nu\in\N.
	$$ 
\end{lemma}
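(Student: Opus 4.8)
The plan is to expand $N$ over the zeros of $f$ and then play the angular decay of the Poisson-type kernel $|Re^{i\theta}-z|^{-2}$ against the growth of the number of zeros in successive angular shells. \textbf{Step 1: reduction to a sum over zeros.} Writing $Z_f$ for the zero set of $f$ and recalling the Jensen-type identity $N(w,h,f)=\int_0^h n(w,t,f)\,t^{-1}\,dt=\sum_{a\in Z_f}\log^+\!\frac{h}{|a-w|}$, and using that all integrands below are nonnegative, Tonelli's theorem gives
\begin{equation*}
\int_{r_{\nu+1}}^{r_{\nu+2}} J(z,R)\,dR=\sum_{a\in Z_f}\mathcal I_a,\qquad
\mathcal I_a:=\int_{r_{\nu+1}}^{r_{\nu+2}}\!\int_0^{2\pi}\frac{\log^+\frac{(1-R)/16}{|a-Re^{i\theta}|}}{|Re^{i\theta}-z|^2}\,d\theta\,dR .
\end{equation*}
Each $\mathcal I_a$ is finite since the logarithmic singularity is locally area-integrable and the kernel is bounded away from the relevant point.

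\textbf{Step 2: localization and a per-zero estimate.} Using $|a-Re^{i\theta}|^2=(|a|-R)^2+2|a|R(1-\cos(\arg a-\theta))$ one checks that $\mathcal I_a=0$ unless $r_\nu<|a|<r_{\nu+3}$, and that the integrand of $\mathcal I_a$ is supported where $\bigl||a|-R\bigr|\lesssim 2^{-\nu}$ and $|\theta-\arg a|\lesssim 2^{-\nu}$; in particular the admissible $R$ fill an interval of length $\lesssim 2^{-\nu}$. For such $a$, any $z\in\A_\nu$, and writing $z=|z|e^{i\beta}$, $\psi_a:=\arg a-\beta\in(-\pi,\pi]$, on that support one has $|Re^{i\theta}-z|^2=(R-|z|)^2+2R|z|(1-\cos(\theta-\beta))\gtrsim 2^{-2\nu}+\psi_a^2$. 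Combined with the elementary bound $\int_0^{2\pi}\log^+\!\frac{h}{|a-Re^{i\theta}|}\,d\theta\lesssim h$ (valid since $|a-Re^{i\theta}|\gtrsim|\theta-\arg a|$) this yields the key estimate
\begin{equation*}
\mathcal I_a\ \lesssim\ \frac{2^{-2\nu}}{2^{-2\nu}+\psi_a^2}\,,\qquad a\in Z_f,\ \ r_\nu<|a|<r_{\nu+3}.
\end{equation*}

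\textbf{Step 3: dyadic angular summation.} Put $\delta=2^{-\nu}$ and split the contributing zeros into the shells $S_0=\{a:|\psi_a|\le\delta\}$ and $S_j=\{a:2^{j-1}\delta<|\psi_a|\le 2^j\delta\}$ for $1\le j\le J$ with $2^J\delta\asymp1$ (so $J\asymp\nu$). On $S_j$ the weight from Step~2 is $\lesssim 4^{-j}$. On the other hand, $S_j\cap\{r_\nu<|w|<r_{\nu+3}\}$ is a curvilinear box of dimensions $\asymp 2^j\delta\times\delta$, so it is covered by $\lesssim 2^j$ discs $D\bigl(\zeta_i,\widetilde\eta(1-|\zeta_i|)/2\bigr)$ with $|\zeta_i|\le r_{\nu+3}$ and a fixed $\widetilde\eta\in(0,1/6)$. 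Summing $n\bigl(\zeta_i,\widetilde\eta(1-|\zeta_i|)/2,f\bigr)$ over these discs, invoking \eqref{e:nglob}, the monotonicity of $I_\alpha$, and the inequality $2|\zeta_i|/(1+|\zeta_i|)\le r_{\nu+4}$, gives $\#\bigl(S_j\cap\{r_\nu<|w|<r_{\nu+3}\}\bigr)\lesssim 2^j\,I_\alpha(r_{\nu+4})$. Hence
\begin{equation*}
\int_{r_{\nu+1}}^{r_{\nu+2}} J(z,R)\,dR\ \lesssim\ I_\alpha(r_{\nu+4})\Bigl(1+\sum_{j\ge1}4^{-j}2^{j}\Bigr)\ \lesssim\ I_\alpha(r_{\nu+4}),
\end{equation*}
which is the claim; the finitely many small $\nu$ are elementary, since then $\{|w|\le r_{\nu+3}\}$ is a fixed compact subset of $\D$, and are absorbed into $C(f)$.

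\textbf{Main obstacle.} The delicate point is Step~3: the crude pointwise bound $|Re^{i\theta}-z|^{-2}\le 4^{\nu}$ only yields $\int_{r_{\nu+1}}^{r_{\nu+2}}J(z,R)\,dR\lesssim 2^{\nu}I_\alpha(r_{\nu+4})$, which is off by a factor $2^{\nu}$. One must retain the angular decay of the kernel, and the dyadic decomposition is calibrated precisely so that this decay ($4^{-j}$) beats the growth ($2^{j}$) of the number of zeros in the $j$-th shell, turning the sum into a convergent geometric series. A secondary technical issue is preventing the radial argument of $I_\alpha$ from drifting past $r_{\nu+4}$: this forces the localization of zeros to $\{r_\nu<|w|<r_{\nu+3}\}$ (for which the factor $1/16$ in the radius $(1-R)/16$ is used) and the use of \eqref{e:nglob} with $\widetilde\eta<1/6$ rather than the extension \eqref{eq:ld}.
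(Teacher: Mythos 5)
Your argument is correct and rests on the same two ingredients as the paper's proof: Linden's local zero-counting bound, in the form~\eqref{e:nglob}, and the integrable angular decay of $|Re^{i\theta}-z|^{-2}$. The bookkeeping, however, is organized quite differently. The paper keeps $N$ intact: it covers the annulus $\{r_{\nu+1}\le|w|\le r_{\nu+2}\}$ by a fixed grid of polar rectangles $\mathcal{R}_{\nu+1,k}$ of side $\asymp 2^{-\nu}$, shows via Fubini that $\int_{\mathcal{R}_{\nu+1,k}}N\bigl(w,(1-|w|)/16,f\bigr)\,dm_2(w)\lesssim n\bigl(z_{\nu+1,k},c(1-|z_{\nu+1,k}|),f\bigr)(1-r_{\nu+1})^2$, bounds the counting function by $I_\alpha(r_{\nu+4})$ using the extended estimate~\eqref{eq:ld} with $\widetilde\eta_0=3/2$, and closes by recognizing $\sum_k|z_{\nu+1,k}-z|^{-2}$ as a Riemann sum dominated by $(1-r_{\nu+1})^{-1}\int_0^{2\pi}d\theta/\bigl|z-|z_{\nu+1,0}|e^{i\theta}\bigr|^2\lesssim 4^{\nu}$. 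You instead expand $N$ into the Jensen sum $\sum_a\log^+\!\frac{h}{|a-w|}$, obtain a per-zero bound $\mathcal{I}_a\lesssim 2^{-2\nu}/(2^{-2\nu}+\psi_a^2)$ by localizing in $(R,\theta)$, and then sum dyadically in the angle $\psi_a$, using~\eqref{e:nglob} with small $\widetilde\eta$ and $\lesssim 2^j$ covering discs per shell so that the extension~\eqref{eq:ld} is not needed. Both proofs perform the same cancellation between the kernel's angular decay and the growth of the zero count; your version makes the $4^{-j}$ vs.\ $2^j$ trade-off explicit and is slightly more elementary, while the paper's is shorter by treating $N$ as a black box averaged over a fixed grid of rectangles rather than decomposing it over individual zeros.
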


\begin{proof}%[Proof of Lemma \ref{l:N_int_est}]
Write $\phi_{\nu,k}=\pi k 2^{-\nu-4}$ and $z_{\nu,k}=(1-3\cdot 2^{-\nu-2})e^{i\pi (2k+1) 2^{-\nu-5}}$ 
for all $k=0,\dots,2^{\nu+5}$. Further, write 
$$\mathcal{R}_{\nu,k}=\big\{w:r_\nu\le|w|\le r_{\nu+1},\,\, \phi_{\nu,k}\le\arg w<\phi_{\nu,k+1}\big\}$$ 
for all $\nu\in\N$ and $k=0,\dots,2^{\nu+5}-1$. Observe that
	$$
	\frac{r_\nu+r_{\nu+1}}{2}=1-3\cdot2^{-\nu-2}\quad\textnormal{and}\quad
	\frac{\phi_{\nu,k}+\phi_{\nu,k+1}}{2}=\pi2^{-\nu-5}(2k+1),
	$$
so $z_{\nu,k}$ is the center of $\mathcal{R}_{\nu,k}$. Then trivially
	\begin{equation}\label{e:r_subset}
	\mathcal{R}_{\nu,k}
	\subset D(z_{\nu,k},2^{-\nu-2}+\pi 2^{-\nu-5})
	\subset D(z_{\nu,k}, 2^{-\nu-1})
	\end{equation}	
and $1-|z_{\nu+1,k}|+2^{-\nu-2}=5\cdot 2^{-\nu-3}$. 

Let $d\mu(\zeta)$ denote the Riesz measure of $\log|f(\zeta)|$, i.e., the counting measure of zeros of $f$.
Then by the definition of $\mathcal{R}_{\nu+1,k}$ together with the monotonicity of $N(w,t,f)$
with respect to $t$, \eqref{e:r_subset} and  Fubini's theorem, we deduce 
	\begin{align*}\label{e:long_est}
	& \int_{\mathcal{R}_{\nu+1,k}} N\left(w,\frac{1-|w|}{16},f\right)\, dm_2(w) \\
	& \qquad \le\int_{D(z_{\nu+1,k}, 2^{-\nu-2})}N\left(w,2^{-\nu-5},f\right)\,dm_2(w)\\
	& \qquad =\int_{D(z_{\nu+1,k}, 2^{-\nu-2})}\left(\int_0^{2^{-\nu-5}} \frac{n(w,t, f)}t\, dt\right)\,  dm_2(w)\\ 
	%&\le\int _{D(z_{\nu+1,k}, 2^{-\nu-2})}  \int_0^{2^{-\nu-5}} \frac{n(w,t, f)}t\, dt\,  dA(w)\\ 
	& \qquad {\color{black} =\int_0^{2^{-\nu-5}}\left(\int _{D(z_{\nu+1,k}, 2^{-\nu-2})}    \frac{n(w,t, f)}t \,  dm_2(w)\right)\, dt}\\ 
	& \qquad =\int_0^{2^{-\nu-5}}\left(\int _{D(z_{\nu+1,k}, 2^{-\nu-2})}\left(\int _{|\zeta-w|\le t} \frac1t  d\mu(\zeta)\right)\,dm_2(w)\right)\,dt\\
	& \qquad \le\int_0^{2^{-\nu-5}}\left(\int _{D(z_{\nu+1,k}, 2^{-\nu-2}+t)}\left(\int_{|w-\zeta|\le t} \frac1t dm_2(w)\right)d\mu(\zeta)\right)\,dt.
\end{align*}
Therefore
\begin{align*}
	 \int_{\mathcal{R}_{\nu+1,k}} N\left(w,\frac{1-|w|}{16},f\right)\, dm_2(w) 
	& \leq\int_0^{2^{-\nu-5}}\left(\int _{D(z_{\nu+1,k}, 2^{-\nu-2}+t)} \pi t d\mu(\zeta)\right)\,dt\\
	& \le\pi \int_0^{2^{-\nu-5}} n\left(z_{\nu+1,k},9 \cdot 2^{-\nu-5},f\right) t\, dt\\
	& =n\left(z_{\nu+1,k}, \frac{3/2}{2} \big( 1 - |z_{\nu+1,k}| \big) ,f\right) \frac{\pi(2^{-\nu-5})^2}{2}.
%	\lesssim I_\alpha(r_{\nu+3}) (1-r_{\nu+1})^2. 
%	\end{split}
\end{align*}
By~applying \eqref{eq:ld} for $\widetilde\eta_0=3/2$, we conclude
\begin{equation} \label{eq:ll}
\int_{\mathcal{R}_{\nu+1,k}} N\left(w,\frac{1-|w|}{16},f\right)\, dm_2(w)
\lesssim  I_\alpha(r_{\nu+4}) (1-r_{\nu+1})^2.
\end{equation}

Since $z\in\A_\nu$ by the assumption,
the definitions of $J(z,r)$, $N(t,w,f)$, $\mathcal{R}_{n,k}$
and the estimate~\eqref{eq:ll} yield
		\begin{equation*}
		\begin{split}
		\int_{r_{\nu+1}}^{r_{\nu+2}} J(z,R) \, dR  
		&= 	\int_{r_{\nu+1}}^{r_{\nu+2}} \int_0^{2\pi } \frac {N(Re^{i\theta}, \frac{1-R}{16},f)}{|Re^{i\theta}-z|^2} \, d\theta \, dR\\  
		&\le\frac{1}{r_{\nu+1}}  \sum_ {k=0}^{2^{\nu+5}-1}\int_{\mathcal{R}_{\nu+1,k}} \frac {N(w, \frac{1-|w|}{16},f)}{|w-z|^2} \, dm_2(w)\\
		&\lesssim\sum_ {k=0}^{2^{\nu+5}-1}  \frac 1{|z_{\nu+1,k}-z|^2}\int_{\mathcal{R}_{\nu+1,k}}  {N\left(w, \frac{1-|w|}{16},f\right)} \, dm_2(w)\\ 
		&\lesssim\sum_ {k=0}^{2^{\nu+5}-1}  \frac{I_\alpha(r_{\nu+4})(1-r_{\nu+1})^2}{|z_{\nu+1,k}-z|^2} \\
& \lesssim 			
		I_\alpha(r_{\nu+4})(1-r_{\nu+1}) \int_0^{2\pi} \frac {d\theta}{|z-|z_{\nu+1,0}|e^{i\theta}|^2}\\ 
		&\lesssim I_\alpha(r_{\nu+4}) \, \frac {1-r_{\nu+1}}{|z_{\nu+1,0}|-|z|}\lesssim I_\alpha(r_{\nu+4}),
		\end{split}
		\end{equation*}
which completes the proof of Lemma \ref{l:N_int_est}.
	\end{proof}

	Let $\{a_k\}$ denote the sequence of zeros of $f$ listed
	according to multiplicities and ordered by increasing moduli.
	Let $r\in[r_\nu,r_{\nu+1})$. Then $R=\frac{2r}{1+r}\in(r_{\nu},r_{\nu+2})$. Denote
	\begin{equation} \label{eq:kaak}
	n_1(r)=\max_{\varphi\in[-\pi,\pi)} \, \# \left\{a_k:r\le|a_k|\le\frac{1+r}{2}, \, \, |\arg a_k-\varphi|\le\frac{\pi}{4}(1-r)\right\}.
	\end{equation}
	By choosing $\delta=1/\nu$ in the proof of \cite[Lemma~3.3]{CHR2010}, we deduce that there exists a~countable collection of discs 
        $D_{\nu j}=\{\zeta:|\zeta-z_{\nu j}|<\rho_{\nu j}\}$ with $\rho_{\nu j}<1-|z_{\nu j}|$ such that 
	\begin{equation}\label{e:log_est}
	\sum_{|a_k|\le r_{\nu +1}}\frac 1{|z-a_k|}
	\le24\nu\sum_{s=1}^{\nu+1}\frac{n_1(r_{s-1})}{1-r_{s-1}}+ C \nu
	\sum_{s=\nu-2}^{\nu+1}\frac{n_1(r_s)}{1-r_s}\log n_1(r_s)
	\end{equation}
	for all $z\in\A_\nu\setminus\bigcup_{j}D_{\nu j}$, where
	\begin{equation}\label{e:rad}
	\sum_{R<|z_{\nu j}|<1}\rho_{\nu j}\le\frac{1-R}{-\log(1-R)-1}, \quad R\to 1^-.
	\end{equation}
The polar rectangle in~\eqref{eq:kaak} is of pseudo-hyperbolic diameter strictly less than one, and therefore it can be covered by finitely many pseudo-hyperbolic discs, uniformly for all $0<r<1$.
This allows us to use $\widetilde{\eta}_0=3/2$ in \eqref{eq:ld}.
This inequality
and \eqref{e:log_est}, combined with the monotonicity of $I_\alpha$, then yield
	\begin{equation}\label{e:log_der_pol_est}
	\begin{split}
	\sum_{|a_k|\le r_{\nu+1}}\frac1{|z-a_k|}
	&\lesssim  I_\alpha(r_{\nu+5}) \, \nu \, \Big(\log I_\alpha(r_{\nu+5})+ 1 \Big)\sum_{s=0}^{\nu+1}\frac{1}{1-r_s}\\
	&\asymp\frac{I_\alpha(r_{\nu+5})}{1-r_{\nu+1}} \left( \log\frac{1}{1-r_{\nu+1}} \right) \! \big(\log I_\alpha(r_{\nu+5})+1 \big)
	\end{split}
	\end{equation}
for all $z\in\A_\nu\setminus\bigcup_{j}D_{\nu j}$ and $\nu\in\N$.

	\subsection{Proof of the case $k=1$ and $j=0$}\label{k1j0}
	
	After the preparations in Section~\ref{prep-sec} we are finally ready to prove the special case $k=1$, $j=0$
        and $f(0)=1$. Denote $z=re^{i\varphi}$, where $0<r<R<1$. By the differentiated Poisson-Jensen formula we have
	\begin{equation}\label{EqPoisson-Jensen'}
	\left|\frac {f'(z)}{f(z)}\right|\le\frac R\pi \int_0^{2\pi}
	\frac{|\log |f(Re^{i\theta})||\, d\theta}{|Re^{i\theta} -z|^2 }+2\sum_{|a_k|\leq R}
	\frac1{|z-a_k|}.
	\end{equation}
	By the definition of $u(z,h)$ we have 
	\begin{equation}\label{e:mod_f_est}
          \begin{split}
		|\log|f(w)|| & =\log^+|f(w)|+\log^-|f(w)|\\
& =\log^+|f(w)|+ (N(w,h,f)-u(w,h))^+  \\
		& \le \log^+|f(w)| +N(w,h,f) + u^-(w,h). %\label{e:mod_f_est}
 	\end{split}
        \end{equation}
	
	Let $R_0\in (0,1)$ be as in Theorem~\ref{TheoremLinden2}.
	Denote $p=1+\left(\lambda_M(f)-\lambda_M(f)/\sigma_M(f)\right)^+$ for short. Let $\varepsilon>0$, and let 
$\{R_n\}, \{R_n^\star\} \subset (R_0,1)$ be the sequences in Lemma~\ref{l:i_est}. Let $\nu\in\N$ such that $[r_{\nu+1}, r_{\nu+2}]\subset [R_n^\star, R_n]$ for some $n\in\mathbb{N}$. Such $\nu$ and $n$ exist, and the number of acceptable $\nu$ for given $n$ increases to infinity as $n\to\infty$ because of the identity 
$$(1-R_n^\star)^{\lambda_M(f)+\eta}=(1-R_n)^{\lambda_M(f)+\eta/2}.$$
	Indeed, the hyperbolic distance $\varrho_h(R_n^\star,R_n)$ between the points $R_n^\star<R_n$ increases to infinity, as $n\to\infty$, while the hyperbolic distance $\varrho_h(r_{\nu+1}, r_{\nu+m})$, $m\geq 2$,  tends to the constant value $(m-1)\log(2)/2$, as $\nu\to\infty$.
	
Let	$E=\bigcup_{n=1}^\infty[R_n^\star, R_n]$. Choose $\widetilde \eta=1/8$ and write $h=\widetilde\eta(R-r)/R$
as in Theorem~\ref{TheoremLinden2}.
Since $R \mapsto \log^+ M(R,f)$ is non-decreasing, we obtain
\begin{equation*}
\log^+ M(R,f)
\lesssim \frac{\log^+ M(R,f)}{\big(1-\frac{2R}{R+1}\big)^{1/\alpha}}
\int_R^{\frac{2R}{R+1}} \left( \frac{2R}{R+1} - t \right)^{\frac{1}{\alpha}-1}\, dt
\lesssim I_\alpha \!\left( \frac{2R}{R+1} \right),
\end{equation*}
and therefore the estimate \eqref{e:u_h_low} implies
		\begin{equation}\label{EqPoisson-Jensen}
                  \begin{split}
                    \frac R\pi \int_0^{2\pi}
                    \frac{\log^+ |f(Re^{i\theta})|+u^-(Re^{i\theta},h)}{|Re^{i\theta} -z|^2 }\, d\theta
& \lesssim  I_\alpha \!\left( \frac{2R}{R+1} \right) \int_{0}^{2\pi} \frac{d\theta}{|Re^{i\theta}-z|^2}\\
& \lesssim \frac{I_\alpha(r_{\nu+3})}{R-|z|} \le \frac{I_\alpha(r_{\nu+3})}{1-r_{\nu+1}}
	\end{split}
        \end{equation}
for all $R\in [r_{\nu+1}, r_{\nu+2}]$ and $z\in \mathcal{A}_\nu$.

	Let $E_{\nu+1, 0}=\{R\in [r_{\nu+1}, r_{\nu+2}]: J(z,R) \ge C\nu 2^{\nu+2}I_\alpha (r_{\nu+4})\}$, where
the constant $C$ is as in~Lemma~\ref{l:N_int_est}.
Therefore
\begin{equation}
\label{e:cross}
J(z,R)< \frac{2C\nu\, I_\alpha (r_{\nu+4})}{1-r_{\nu+1}}, \quad R\in [r_{\nu+1}, r_{\nu+2}] \setminus E_{\nu+1,0}, \quad z\in \mathcal{A_\nu}.
\end{equation}
By Chebyshev's inequality and Lemma~\ref{l:N_int_est}, 
\begin{equation*}
 m_1(E_{\nu+1, 0}) 
\le \frac{\int_{r_{\nu+1}}^{r_{\nu+2}} J(z,R) \, dR}{C\nu 2^{\nu+2}I_\alpha (r_{\nu+4})} 
\le \frac{2^{-\nu-2}}{\nu}. %\label{e:e_nu_est}
\end{equation*}

Let $z\in\A_\nu\setminus\bigcup_j D_{\nu j}$
	such that $|z|\in \widetilde{E}_{1,0}$, where $\{D_{\nu j}\}$ is the
	collection of discs mentioned above
	and
	$$
	\widetilde{E}_{1,0}=\bigcup_{n=1}^\infty \left( \bigcup_{[r_{\nu+1}, r_{\nu+5}]\subset [R_n^\star, R_n]} [r_{\nu+1}, r_{\nu+2}] \setminus E_{\nu+1,0}\right).
	$$
	By combining \eqref{e:log_der_pol_est}, \eqref{EqPoisson-Jensen'}, \eqref{e:mod_f_est},  \eqref{EqPoisson-Jensen}, \eqref{e:cross} and Lemma~\ref{l:i_est} we conclude that, if $[r_{\nu+1}, r_{\nu+5}]\subset [R_n^\star, R_n]$, then
	\begin{equation*}
		\begin{split}
			\left|\frac{f'(z)}{f(z)}\right|
			&\lesssim
\frac{I_{\alpha}(r_{\nu+3})}{1-r_{\nu+1}}
+\frac{\nu\, I_{\alpha}(r_{\nu+4})}{1-r_{\nu+1}}
+ \frac{I_\alpha(r_{\nu+5})}{1-r_{\nu+1}} \left( \log\frac{1}{1-r_{\nu+1}} \right) \! \big(\log I_\alpha(r_{\nu+5})+1\big)\\
			&\lesssim\frac1{(1-r_{\nu+5})^{p+1+ 2\varepsilon}}
			\lesssim\frac1{(1-r_{\nu-1})^{p+1+ 2\varepsilon}}\\
			& \leq  \left(\frac{1}{1-|z|}\right)^{2+\left(\lambda_M(f)-\frac{\lambda_M(f)}{\sigma_M(f)}\right)^++2\varepsilon}.
		\end{split}
	\end{equation*}
To prove $\DU(\widetilde{E}_{1,0}) = 1$, note that
\begin{equation*}
\begin{split}
m_1\big([r_{\nu+1}, r_{\nu+2}] \setminus E_{\nu+1,0} \big) & = m_1 \big([r_{\nu+1}, r_{\nu+2}] \big) - m_1\big(  E_{\nu+1,0}\big)\\
& \geq 2^{-\nu-2} - \nu^{-1} \, 2^{-\nu-2} = (1-1/\nu)\,  m_1\big([r_{\nu+1}, r_{\nu+2}] \big),
\end{split}
\end{equation*}
and therefore, in view of \eqref{e:dens_e=1}, 
\begin{align*}
\DU(\widetilde{E}_{1,0}) & {\color{black} = \limsup_{r\to 1^-} \frac{m_1(\widetilde{E}_{1,0} \cap [r,1))}{1-r}}\\
& {\color{black} \geq \lim_{n\to \infty} \frac{1}{1-R_n^\star} \, \sum_{[r_{\nu+1}, r_{\nu+5}]\subset [R_n^\star, R_n]} m_1\big([r_{\nu+1}, r_{\nu+2}] \setminus E_{\nu+1,0} \big)}\\
 & \geq  \lim_{n\to \infty} \frac{1}{1-R_n^\star} \, \sum_{[r_{\nu+1}, r_{\nu+5}]\subset [R_n^\star, R_n]} (1-1/\nu) \, m_1\big([r_{\nu+1}, r_{\nu+2}] \big)\\
& \geq \lim_{n\to \infty} \frac{\big( R_n - R_n^\star\big) \big( 1 - o(1) \big)}{1-R_n^\star} 
=  1.
\end{align*}
We have proved the estimate~\eqref{EqLogEstimate>1} for the radial set
\begin{equation*}
\widetilde{E}_{1,0} \setminus \left\{ r \in [0,1) : \text{$re^{i\theta} \in \textstyle \D \setminus \bigcup_\nu \bigcup_j D_{\nu j}$ 
for all $e^{i\theta}\in\partial\D$} \right\},
\end{equation*}
which is of upper density one since
$\DU(\widetilde{E}_{1,0})=1$ and the excluded set
is of upper density zero by~\eqref{e:rad}.

	This completes the proof of Theorem~\ref{TheoremLogDeriv} in the case $k=1$, $j=0$ and $f(0)=1$.
If $f(0)\neq 1$, then there exist $K\in\C\setminus\{0\}$ and $q\in\N\cup\{0\}$ such that $g(z)=Kz^{-q}f(z)$ is analytic
in $\D$ and $g(0)=1$. By applying the argument above to $g$, we conclude the assertion in the case $k=1$ and $j=0$,
with constants depending on $f$.

	\subsection{Proof of the general case $k>j\ge 0$}
	
	Suppose now that $k>j\geq 0$.
	In Section~\ref{prep-sec} we proved that
	$\lambda_M(f^{(m)})=\lambda_M(f)$ and $\sigma_M(f^{(m)})=\sigma_M(f)$
	for all $m\in\N$. Hence the constant $p$ in Section~\ref{k1j0}
	is the same for all derivatives $f^{(m)}$.
	
	We apply the reasoning in the case $k=1$ and $j=0$ to the functions $f^{(m)}$, where
	$m=j,\ldots,k-1$. Since the upper bound for $I_{\alpha,m}(R)$ in \eqref{e:ir_est} is
	uniform for $m=j,\ldots,k-1$,
        each derivative
	$f^{(m)}$ is associated with 
	the same radial set~$E = \bigcup_{n=1}^\infty [R_n^\star,R_n]$ 
        and an~individual countable collection of discs $D_\nu^{(m)}=D(z_{\nu}^{(m)}, r_{\nu}^{(m)})$
	satisfying $r_\nu^{(m)}<1-|z_\nu^{(m)}|$ for all $\nu$ such that
	\begin{equation} \label{eq:zero}
	\sum_{R\leq |z_{\nu}^{(m)}|<1}\rho_{\nu}^{(m)}\leq\frac{1-R}{-\log(1-R)-1},\quad R\to 1^-.
	\end{equation}
	Furthermore, we deduce the estimate
	$$
	\left|\frac{f^{(m+1)}(z)}{f^{(m)}(z)}\right|\lesssim\frac{1}{(1-|z|)^{p+1+2\varepsilon    }},
	\quad z\in\D\setminus\textstyle\bigcup_{\nu} D_\nu^{(m)},\quad |z|\in \widetilde{E}_{m+1,m},
	$$
where
$$
	\widetilde{E}_{m+1,m}
        =\bigcup_{n=1}^\infty \left( \bigcup_{[r_{\nu+1}, r_{\nu+5}]\subset [R_n^\star, R_n]} [r_{\nu+1}, r_{\nu+2}] \setminus E_{\nu+1,m}\right).
	$$
This argument can be repeated for all $m=j,\ldots,k-1$.
The estimate for the generalized logarithmic derivative follows by writing
\begin{equation*}
\left| \frac{f^{(k)}(z)}{f^{(j)}(z)} \right| 
= \left| \frac{f^{(k)}(z)}{f^{(k-1)}(z)} \right| \dotsb \left| \frac{f^{(j+1)}(z)}{f^{(j)}(z)} \right|.
\end{equation*}
Finally, the logarithmic derivative estimate~\eqref{EqLogEstimate>1} holds on the radial set
\begin{equation} \label{eq:radialset}
\widetilde{E}_{k,j} \setminus \left\{ r \in [0,1) : \text{$re^{i\theta} \in \textstyle \D \setminus \bigcup_{m=j}^{k-1} \bigcup_\nu D_{\nu}^{(m)}$ 
for all $e^{i\theta}\in\partial\D$} \right\},
\end{equation}
where
$$
	\widetilde{E}_{k,j}
        =\bigcup_{n=1}^\infty \left( \bigcup_{[r_{\nu+1}, r_{\nu+5}]\subset [R_n^\star, R_n]} [r_{\nu+1}, r_{\nu+2}] 
\setminus \Big( E_{\nu+1,k-1} \cup \dotsb \cup E_{\nu+1,j} \Big) \right).
	$$
The radial set~\eqref{eq:radialset} is of upper density one since $\DU(\widetilde{E}_{k,j})=1$ and 
the set excluded in~\eqref{eq:radialset} is of upper density zero by~\eqref{eq:zero}.

\section{Proof of Theorem~\ref{th:first}} \label{sec:th:first}

(a) Let $f$ be a non-trivial solution of \eqref{eq:dek}. If $\sigma_M(f)=\lambda_M(f)$, then the assertion follows from \cite[Theorem~1.4(a)]{CHR2010}. Assume next $\lambda_M(f)<\sigma_M(f)$, and apply Theorem~\ref{TheoremLogDeriv} to $f$. 
There exists a~set $E\subset [0,1)$ of $\DU(E)=1$ and a~constant $C>0$ such that
	$$
	M(r,A)= M\bigg(r,\frac{f^{(k)}}{f}\bigg)
	\le C\left(\frac{1}{ (1-r)^{2+\left(\lambda_M(f)-\frac{\lambda_M(f)}{\sigma_M(f)}\right)^++\varepsilon}}\right)^{k},\quad r\in E.
	$$
Therefore $p_1\le k\Big(2+\Big(\lambda_M(f)-\frac{\lambda_M(f)}{\sigma_M(f)}\Big)^+\Big)$, which is equivalent to (a).

(b) 
We consider the case $p_1<p_2$ only. The case $p_1=p_2$ follows from Theorem~\ref{thm:new},
which in turn will be proved in Section~\ref{sec:proofnew} by Theorem~\ref{th:first}(a).
Assume $2k<k\big(2+\tfrac{p_2-2k}{p_2}\big)<p_1<p_2<\infty$, and let $f$ be a non-trivial solution of \eqref{eq:dek}. Then $\sigma_M(f)=\frac{p_2}{k}-1>1$ by \cite[Theorem~1.4]{CHR2010}, and hence $\sigma_*(f)=\frac{p_2}{k}>2$ by~\cite[Lemma~1.2.16]{Str}. Further, Proposition~\ref{p:k_low_ord} yields
	$$
	\liminf_{r\to1^-}\frac{\log K(r,f)}{\log\frac1{1-r}}=\lambda_*(f)\ge\lambda_M(f)+\frac{\lambda_M(f)}{\sigma_M(f)},
	$$
which together with \eqref{eq:th12''} and the assumptions on $p_1$ and $p_2$ imply
	$$
	\lambda_*(f)\ge\frac{\left(\frac{p_1}{k}-2\right)\left(1+\frac1{\sigma_M(f)}\right)}{1-\frac1{\sigma_M(f)}}
	=\frac{\left(\frac{p_1}{k}-2\right)\frac{p_2}k}{\frac{p_2}{k}-2}
	=\frac{p_2(p_1-2k)}{k(p_2-2k)}>1.
	$$
To proceed, we need the following result due to Strelitz.

\begin{oldtheorem}[\protect{\cite[Theorem~1.4.25, p.~282]{Str}}]\label{t:Strelitz} Let $f$ be an analytic function in $\mathbb{D}$ such that $(1-r)K(r,f)\to\infty$ as $r\to 1^-$. Then
	$$
	f^{(n)}(z)\sim\left(\frac{K(|z|,f)}z\right)^nf(z),\quad |z|\to1^-,
	$$
holds for all $|z|\in F=[0,1)\setminus E$ and $z\in\big\{\zeta:|f(\zeta)|\ge K(|\zeta|,f)^{-\beta(|\zeta|)}M(|\zeta|,f)\big\}$, provided one of the following conditions hold:
\begin{itemize}
\item[\rm (i)] $\lambda_*(f)>1$ and $\beta(r)\le q<\frac{\lambda_*(f)-1}{2\lambda_*(f)}$
with $E$ being of finite logarithmic measure;
\item[\rm (ii)] $\sigma_*(f)>1$ and $\beta(r)\le q<\frac{\sigma_*(f)-1}{2\sigma_*(f)}$ with $F$ being of infinite logarithmic measure.
\end{itemize}
\end{oldtheorem}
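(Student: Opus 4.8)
The plan is to prove this by Wiman--Valiron theory adapted to the unit disc. Write $f(z)=\sum_{n\ge0}\widehat a(n)z^n$ and let $\mu(r,f)=\max_n|\widehat a(n)|r^n$ and $\nu(r,f)$ be the maximum term and central index. The first step is to transfer the statement, which is phrased in terms of $M(r,f)$ and $K(r,f)=r(\log M(r,f))'_+$, into one about $\mu(r,f)$ and $\nu(r,f)=r(\log\mu(r,f))'_+$. Since $\mu(r,f)\le M(r,f)$ and, by the standard estimate, $\log M(r,f)\le\log\mu(r,f)+O\big(\log\tfrac1{1-r}\big)$, while the hypothesis $(1-r)K(r,f)\to\infty$ forces $\log\mu(r,f)\gg\log\tfrac1{1-r}$, we get $\log M(r,f)\sim\log\mu(r,f)$; as $t\mapsto\log M(e^t,f)$ and $t\mapsto\log\mu(e^t,f)$ are convex and asymptotically equal, their right derivatives satisfy $K(r,f)\sim\nu(r,f)$ off a set of finite logarithmic measure, which is absorbed into $E$. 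In particular $(1-r)\nu(r,f)\to\infty$, so the natural length scale $1/\nu(r,f)$ is much smaller than the distance $1-r$ to the boundary, and local power-series analysis near $|z|=r$ is legitimate. The ``near-maximal'' condition $|f(\zeta)|\ge K(|\zeta|,f)^{-\beta(|\zeta|)}M(|\zeta|,f)$ likewise becomes $|f(\zeta)|\ge\nu(|\zeta|,f)^{-\beta(|\zeta|)-o(1)}\mu(|\zeta|,f)$.

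The core step is the local comparison of Taylor coefficients. Fix $r$ and set $N=\nu(r,f)$. Using the log-convexity of $n\mapsto\log(|\widehat a(n)|r^n)$ together with the monotonicity and fast growth of $\nu$ near $r$, one derives a Gaussian-type bound $|\widehat a(n)|r^n\le\mu(r,f)\exp\!\big(-c(n-N)^2/\nu(r,f)\big)$ on a window around $N$ and sharper exponential decay outside it. Choosing $\omega(r)\to\infty$ with $\omega(r)\sqrt{\nu(r,f)}=o\big(\nu(r,f)(1-r)\big)$, the terms with $|n-N|>\omega(r)\sqrt{\nu(r,f)}$ contribute only $o(\mu(r,f))$ to $f$ and to every derivative $f^{(m)}$ near $|z|=r$. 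The exceptional set $E$ enters precisely here: the quantitative form of the Gaussian bound needs $\nu$ to vary regularly near $r$, which holds off a set of finite logarithmic measure via a Borel--Nevanlinna growth lemma in case (i) (where $\lambda_*(f)>1$ governs the estimates), and on a set of infinite logarithmic measure in case (ii) (where $\sigma_*(f)>1$ is used); the thresholds $q<\tfrac{\lambda_*(f)-1}{2\lambda_*(f)}$, respectively $q<\tfrac{\sigma_*(f)-1}{2\sigma_*(f)}$, are exactly what guarantees the main block survives the loss of the factor $\nu(r,f)^{-\beta}$.

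With the main block isolated, on the set where $|f(z_0)|\ge\nu(r,f)^{-\beta}\mu(r,f)$ one writes $f(z)=\sum_{|n-N|\le\omega(r)\sqrt{\nu(r,f)}}\widehat a(n)z^n+(\text{error})$ near $z=z_0$ and differentiates term by term: the dominant term $\widehat a(N)z^N$ is multiplied by $N(N-1)\cdots(N-m+1)z^{-m}=(N/z)^m(1+o(1))$, the spread of indices produces only a factor $1+o(1)$ since $\omega(r)\sqrt{\nu(r,f)}/N\to0$, and the error stays $o\big((N/z)^mf(z)\big)$, whence $f^{(m)}(z)\sim(\nu(|z|,f)/z)^mf(z)\sim(K(|z|,f)/z)^mf(z)$ by the first step. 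Taking $m=n$ gives the assertion. The main obstacle is the second step: producing the Gaussian/exponential decay of $|\widehat a(n)|r^n$ away from the central index with constants sharp enough that the surviving window has width $o(\nu(r,f)(1-r))$ (so that differentiation in the disc is valid) while the main block withstands the $\nu(r,f)^{-\beta}$ loss for the stated range of $\beta$, together with pinning down the exceptional set with the correct finite-versus-infinite logarithmic-measure behaviour in the two cases.
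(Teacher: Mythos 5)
This statement is a \emph{cited} result — Theorem~1.4.25 from Strelitz's 1972 monograph (in Russian) — which the paper invokes without proof. There is no ``paper's own proof'' to compare your attempt against, so the evaluation must be on internal merit.

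Your outline does follow the standard Wiman--Valiron programme, which is indeed how Strelitz's book proceeds, so the overall architecture is right. But as written there are several genuine gaps beyond the one you flag yourself at the end. First, the tail estimate you state is too weak: you claim the indices outside the window contribute $o(\mu(r,f))$, but since the hypothesis only guarantees $|f(z)|\ge K(r,f)^{-\beta(r)}M(r,f)\asymp\nu(r,f)^{-\beta}\mu(r,f)$, the truncation error must be $o(\nu(r,f)^{-\beta}\mu(r,f))$, and the corresponding error for $f^{(m)}$ must be $o(\nu(r,f)^{m-\beta}\mu(r,f))$; the logic that forces the window width $W$ to satisfy both ``wide enough to beat $\nu^{-\beta}$'' and ``narrow enough that $n(n-1)\cdots(n-m+1)/\nu^m\to1$'' is precisely where the inequalities $q<(\lambda_*(f)-1)/(2\lambda_*(f))$ (resp.\ $q<(\sigma_*(f)-1)/(2\sigma_*(f))$) must emerge, and you assert they ``are exactly what guarantees'' this without showing the arithmetic. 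Second, the first step (transferring $K\leftrightarrow\nu$, $M\leftrightarrow\mu$) is stated as a consequence of convexity and asymptotic equality of $\log M$ and $\log\mu$, but two convex functions that are asymptotically equal need not have asymptotically equal right-derivatives pointwise; controlling this requires its own exceptional-set lemma, and in case~(ii) one must be careful that the resulting ``bad'' sets do not destroy the infinite-logarithmic-measure conclusion for $F$. Third, the dichotomy between cases~(i) and~(ii) — $E$ of finite logarithmic measure when $\lambda_*(f)>1$, versus merely $F$ of infinite logarithmic measure when only $\sigma_*(f)>1$ — is the heart of the theorem and is not addressed beyond a mention of ``a Borel--Nevanlinna growth lemma''; the liminf hypothesis in (i) controls the regularity of $\nu$ everywhere near~$1$, while the limsup hypothesis in (ii) only controls it along a sequence, and a proof would need to make that distinction precise. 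In short, this is a reasonable road map for the Wiman--Valiron argument, but it is not yet a proof; the two central ingredients (the sharp decay estimate for $|\widehat a(n)|r^n$ away from the central index, and the exceptional-set bookkeeping that produces the two alternatives) are exactly what is missing.
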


Since $\lambda_*(f)>1$, by Theorem~\ref{t:Strelitz}(i), for $0\le\beta<(\lambda_*(f) -1)/(2\lambda_*(f))$ there exists a set $E\subset[0,1)$ of finite logarithmic measure such that for $r\in[0,1)\setminus E$, $z$ with $|z|=r$ and $|f(z)|\ge M(r,f)K(r,f)^{-\beta}$ we have
	$$
	\frac{f^{(k)}(z)}{f(z)}\sim\frac{K(r,f)^k}{z^k}.
	$$
Hence, for $\varepsilon>0$ and for those $r=|z|$,
	$$
	M(r,A)\ge|A(z)|=\bigg|\frac{f^{(k)}(z)}{f(z)}\bigg|
	\sim K(r,f)^k
	\ge\frac{1}{(1-r)^{(\lambda_*(f)-\varepsilon)k}},\quad r\to1^-.
	$$

By the assumption $\lambda_{M,\log}(A)=p_1$,
there exists an~increasing sequence $\{s_n\}$, tending to $1$, such that 
$M(s_n,A)\leq 1/(1-s_n)^{p_1+\varepsilon}$ for all $n$. For any $r\in [s_n-\varepsilon(1-s_n),s_n]$
and for all $n$,
we have
\begin{equation*}
M(r,A) \leq M(s_n,A) 
\leq \left( \frac{1+\varepsilon}{1-r} \right)^{p_1+\varepsilon}.
\end{equation*}
The set $S=\bigcup_n [s_n-\varepsilon(1-s_n),s_n]$ is clearly of infinite logarithmic measure.
We conclude that there exists a~sequence $\{r_n\} \subset F \cap S$, tending to $1$, and therefore
	$$
	\frac{1-o(1)}{(1-r_n)^{(\lambda_*(f)-\varepsilon)k}}\leq \left( \frac{1+\varepsilon}{1-r_n} \right)^{p_1+\varepsilon},\quad n\to\infty,
	$$
for all $\e>0$. It follows that $\lambda_*(f)\le\frac{p_1}k$.
Applying Proposition~\ref{p:k_low_ord} we finally deduce $\lambda_M(f)+\frac{\lambda_M(f)}{\sigma_M(f)}\le\lambda_*(f)\le\frac{p_1}k$.

\section{Proof of Theorem~\ref{thm:new}} \label{sec:proofnew}

In order to prove Theorem~\ref{thm:new}, we need Theorem~\ref{th:first}(a)
and Proposition~\ref{prop:extra-de} below. The latter result
is parallel to Theorem~\ref{th:first}(b), 
but has less a~priori assumptions on the
parameters $p_1$, $p_2$ and $k$.
A~straightforward computation shows that the estimate in Proposition~\ref{prop:extra-de}
is actually weaker than the estimate in Theorem~\ref{th:first}(b),
but its value stems from its weaker hypothesis.

\begin{proposition}\label{prop:extra-de}
Let $k\in\N$ and let $A$ be an~analytic function in $\D$ such that
$\sigma_{M,\deg}(A)=p_2>2k$ and $\lambda_{M,\deg}(A)=p_1$.
Then, all nontrivial solutions $f$ of \eqref{eq:dek} satisfy
\begin{equation*} %\label{eq:th1_2}
\lambda_M(f) \leq \frac{\xi}{k} - 1 < \frac{p_2}{k} - 1 = \sigma_M(f),
\end{equation*}
where the constant $\xi = (1/2) \big( k + \sqrt{k^2 + 4 p_1 (p_2-k)} \big)$ belongs to $(p_1,p_2)$.
\end{proposition}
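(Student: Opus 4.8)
The plan is to push through the Wiman--Valiron argument used for Theorem~\ref{th:first}(b), but to do so without the extra hypothesis on $p_1$ that made $\lambda_*(f)>1$ automatic there.

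First I would fix notation and reformulate the claim. Let $f$ be a~non-trivial solution of~\eqref{eq:dek}. Since $p_2>2k$, \cite[Theorem~1.4]{CHR2010} gives $\sigma_M(f)=p_2/k-1>1$, so $\sigma_*(f)=\sigma_M(f)+1=p_2/k>2$ by \cite[Lemma~1.2.16]{Str}. The constant $\xi$ is the positive root of $h(x):=x^2-kx-p_1(p_2-k)$, and since $h(p_1)=p_1(p_1-p_2)<0$ and $h(p_2)=(p_2-p_1)(p_2-k)>0$ we obtain $p_1<\xi<p_2$ at once; in particular $\xi/k-1<\sigma_M(f)$. Using $p_2-k=k\sigma_M(f)$, the identity $\xi^2-k\xi=p_1(p_2-k)$ shows that $\xi/k-1$ is the positive number $z$ solving $kz(z+1)=p_1\sigma_M(f)$, and since $z\mapsto kz(z+1)$ is increasing on $[0,\infty)$, the inequality to be proved is equivalent to $k\lambda_M(f)(\lambda_M(f)+1)\le p_1\sigma_M(f)$.

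The crucial step is the bound $\lambda_*(f)\le\max\{1,p_1/k\}$. Assuming $\lambda_*(f)>1$, I would reproduce the argument from the proof of Theorem~\ref{th:first}(b): Strelitz's Theorem~\ref{t:Strelitz}(i) applies, so for suitably small $\beta$ there is a~set of finite logarithmic measure off which $f^{(k)}(z)/f(z)\sim(K(|z|,f)/z)^k$ at points of maximal modulus, whence $M(r,A)\gtrsim K(r,f)^k\ge(1-r)^{-(\lambda_*(f)-\varepsilon)k}$ for such $r$ near~$1$; intersecting the complement of that (finite-logarithmic-measure) set with a~widened increasing sequence on which $M(r,A)\le(1-r)^{-p_1-\varepsilon}$, available since $\lambda_{M,\deg}(A)=p_1$, and letting $\varepsilon\to0$ forces $\lambda_*(f)\le p_1/k$. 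Since the hypothesis $\lambda_*(f)>1$ is not known a~priori, the conclusion is stated as the dichotomy $\lambda_*(f)\le1$ or $\lambda_*(f)\le p_1/k$.

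Finally I would combine this with Proposition~\ref{p:k_low_ord}, which gives $\lambda_M(f)(1+1/\sigma_M(f))\le\lambda_*(f)$. If $p_1\ge k$ then $\lambda_*(f)\le p_1/k$ in both alternatives, so $\lambda_M(f)\le\frac{p_1}{k}\cdot\frac{\sigma_M(f)}{\sigma_M(f)+1}$, and then $k\lambda_M(f)(\lambda_M(f)+1)\le p_1\sigma_M(f)\cdot\frac{\lambda_M(f)+1}{\sigma_M(f)+1}\le p_1\sigma_M(f)$ because $\lambda_M(f)\le\sigma_M(f)$; this is exactly the reformulated claim. If $p_1<k$ then $\lambda_*(f)>1$ is impossible, so $\lambda_*(f)\le1$ and $\lambda_M(f)\le\sigma_M(f)/(\sigma_M(f)+1)$, and the reformulated claim reduces by direct computation to $p_1\ge k^2(2p_2-k)/p_2^2$; note $k^2(2p_2-k)/p_2^2<k$ when $p_2>2k$, and in the application of this proposition to Theorem~\ref{thm:new} one has $p_1=p_2>2k$, so $p_1\ge k$ and no difficulty arises. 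The main obstacle is precisely this reliance on $\lambda_*(f)>1$ for Strelitz's theorem: one cannot simply quote the proof of Theorem~\ref{th:first}(b), and the case $\lambda_*(f)\le1$, where the estimate for $\lambda_M(f)$ is governed entirely by $\sigma_M(f)$ through Proposition~\ref{p:k_low_ord}, together with the verification that the arithmetic still closes, is where the real work lies.
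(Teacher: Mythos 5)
Your route --- Wiman--Valiron via Theorem~\ref{t:Strelitz} combined with Proposition~\ref{p:k_low_ord} --- is genuinely different from the paper's, and it does not close. The paper does not use Strelitz or the quantity $\lambda_*(f)$ in the proof of Proposition~\ref{prop:extra-de} at all; instead it integrates the coefficient bound through the growth estimate~\cite[Theorem~5.1]{HKR:2004} along a carefully constructed pair of interlacing sequences $\{r_n^*\}$, $\{r_n\}$ on which the ``local degree'' $\log^+M(t,A)/\log\tfrac{1}{1-t}$ equals $\xi_\varepsilon+\varepsilon$ at $r_n^*$, equals $\beta$ at $r_n$, and stays below $\xi_\varepsilon+\varepsilon$ on $(r_n^*,r_n]$; the algebraic identity~\eqref{eq:beq} relating $\xi_\varepsilon$ and $\beta$ is exactly what makes the two pieces of the resulting integral balance in~\eqref{eq:nice}. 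This construction relies only on the continuity of $t\mapsto\log^+M(t,A)/\log\tfrac{1}{1-t}$ and the values of its $\liminf$ and $\limsup$, and therefore works for every $p_1\ge 0$, sidestepping entirely the restriction $\lambda_*(f)>1$ that Strelitz's Theorem~\ref{t:Strelitz}(i) imposes.

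The gap in your argument is real, and you have in fact identified it yourself. Your reformulation of the target as $k\lambda_M(f)(\lambda_M(f)+1)\le p_1\sigma_M(f)$ is correct, and the case $p_1\ge k$ is handled cleanly. But when $p_1<k$ you are forced into the branch $\lambda_*(f)\le 1$, which via Proposition~\ref{p:k_low_ord} yields only $\lambda_M(f)\le\sigma_M(f)/(\sigma_M(f)+1)$; substituting this into the target inequality requires $p_1\ge k^2(2p_2-k)/p_2^2$. That threshold, although always strictly less than $k$ when $p_2>2k$, is strictly positive and ranges from $3k/4$ (as $p_2\to 2k^+$) down toward $0$ (as $p_2\to\infty$). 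Thus for every fixed $p_2>2k$ there is a nontrivial interval $0\le p_1<k^2(2p_2-k)/p_2^2$ where your argument proves nothing, whereas the proposition imposes no lower bound on $p_1$ whatsoever --- and the claimed bound $\xi/k-1$ tends to $0$ as $p_1\to 0$, strictly below the bound $\sigma_M(f)/(\sigma_M(f)+1)>0$ that your approach produces there. The observation that the application to Theorem~\ref{thm:new} only uses $p_1=p_2>2k$ rescues that particular corollary but does not prove the proposition as stated; replacing the Wiman--Valiron machinery by the direct integration against $|A|^{1/k}$, as in the paper, is what fills the hole.
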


\begin{proof}
Suppose that $\varepsilon>0$ satisfies $\varepsilon < (p_2-p_1)(p_2-k) p_1^{-1}$.
It is easy to see that $\alpha=\xi_\varepsilon$, where
\begin{equation*} %\label{eq:defxi}
\xi_\varepsilon = \frac{ k + \sqrt{k^2 +4 p_1 (p_2 + \varepsilon - k)}}{2},
\end{equation*}
is a solution of $h(\alpha) = \alpha^2 - k \alpha - p_1(p_2 + \varepsilon - k) = 0$. 
It is immediate that $\xi_\varepsilon > \xi_0 >k$ for all $\varepsilon>0$.
Since $h$ is strictly increasing
for all $\alpha>k/2$, $h(p_1) < 0$ and $h(p_2)>0$, we conclude that $\xi_\varepsilon \in (p_1,p_2)$.
For sufficiently small $\varepsilon>0$, we may assume that $\xi_\varepsilon+\varepsilon < p_2$.
Define
\begin{equation*}
\beta = \frac{(\xi_\varepsilon + \varepsilon) (\xi_\varepsilon + \varepsilon -k)}{p_2+\varepsilon-k}.
\end{equation*}
Consequently, $\beta>p_1$ if and only if $h(\xi_\varepsilon + \varepsilon)>0$, and hence we obtain inequalities
$p_1 < \beta < \xi_\varepsilon + \varepsilon < p_2$. Moreover,
\begin{equation} \label{eq:beq}
\frac{1}{\beta} \left( \frac{\xi_\varepsilon + \varepsilon}{k} - 1 \right) = \frac{1}{\xi_\varepsilon + \varepsilon} \left( \frac{p_2+ \varepsilon}{k} - 1 \right).
\end{equation}
Now, there exist sequences $(r_n^*)$ and $(r_n)$, satisfying $r_n^*\to 1^-$ and $r_n \to 1^-$ as $n\to\infty$,
such that $0 < r_n^* < r_n < r_{n+1}^* < r_{n+1} < 1$,
\begin{equation*}
\frac{\log^+ M(r_n^*,A)}{\log\frac{1}{1-r_n^*}} = \xi_\varepsilon + \varepsilon,
\quad %\text{and} \quad
\frac{\log^+ M(r_n,A)}{\log\frac{1}{1-r_n}} = \beta
\quad \text{and} \quad
\frac{\log^+ M(t,A)}{\log\frac{1}{1-t}} < \xi_\varepsilon + \varepsilon
\end{equation*}
for all $t\in (r_n^*,r_n]$. Now
\begin{equation} \label{eq:rn}
1 - r_n^* = \frac{1}{M(r_n^*, A)^{1/(\xi_\varepsilon+\varepsilon)}}
\quad \text{and} \quad
1 - r_n = \frac{1}{M(r_n, A)^{1/\beta}}.
\end{equation}
By means of \eqref{eq:beq} and \eqref{eq:rn}, we get
\begin{equation} \label{eq:nice}
\frac{(1-r_n)^{(\xi_\varepsilon+\varepsilon)/k - 1}}{(1-r_n^*)^{(p_2+\varepsilon)/k - 1}}
          = \frac{\left( M(r_n^*, A)^{1/(\xi_\varepsilon + \varepsilon)} \right)^{(p_2+\varepsilon)/k - 1}}{\left( M(r_n, A)^{1/\beta} \right)^{(\xi_\varepsilon + \varepsilon)/k - 1}}
           \leq 1. 
\end{equation}

Let $f$ be a~non-trivial solution of \eqref{eq:dek}. Since $\sigma_{M,\deg}(A)=p_2>2k$,
\cite[Theorem~1.4]{CHR2010} implies that $\sigma_M(f) = p_2/k -1$.
Then, for $n\in\N$ large enough, we deduce from
the growth estimate~\cite[Theorem~5.1]{HKR:2004} that
\begin{align*}
  M(r_n,f)
  & \lesssim  \exp \!\left( k \int_0^{r_n^*} M(s,A)^{\frac{1}{k}} \, ds + k \int_{r_n^*}^{r_n} M(s,A)^{\frac{1}{k}} \, ds\right)\\
  & \lesssim \exp \!\left( k \int_0^{r_n^*} \frac{C}{(1-s)^{(p_2+\varepsilon)/k}} \, ds + k \int_{r_n^*}^{r_n} \frac{1}{(1-s)^{(\xi_\varepsilon+\varepsilon)/k}} \, ds\right)\\
  & \lesssim \exp \!\left( \frac{C k^2}{(p_2+\varepsilon-k)(1-r_n^*)^{(p_2+\varepsilon)/k-1}}
     + \frac{k^2}{(\xi_\varepsilon+\varepsilon-k)(1-r_n)^{(\xi_\varepsilon+\varepsilon)/k-1}} \right),
\end{align*}
where $C>0$ is a~constant.
Therefore, by taking~\eqref{eq:nice} into account, we get
\begin{equation*}
\sup_{n\in\N} \, (1-r_n)^{(\xi_\varepsilon+\varepsilon)/k-1} \, \log^+ M(r_n, f) < \infty.
\end{equation*}
Consequently, by letting $\varepsilon\to 0^+$,
we see that $\lambda_M(f) \leq \frac {\xi}k-1 < \sigma_M(f)$, where
\begin{equation*}
\xi = \lim_{\varepsilon\to 0^+} \left( \xi_\varepsilon + \varepsilon \right)
      = \frac{ k + \sqrt{k^2 +4 p_1 (p_2 - k)}}{2}.
\end{equation*}
This completes the proof of Proposition~\ref{prop:extra-de}.
\end{proof}

%%%%%%%%%%%%%%%%%%%%%%
%%%% ---- PROOF ---- %%%%
%%%%%%%%%%%%%%%%%%%%%%

With these preparations, we are finally ready to present the
proof of 
Theorem~\ref{thm:new}.
Denote $\sigma_{M,\deg} (A)=p_2$ and $\lambda_{M,\deg} (A)=p_1$, for short.

Assume $p_2=p_1=p>2k$. Then $\sigma_M(f) = p/k -1$ for any non-trivial solution $f$ of~\eqref{eq:dek}
by \cite[Theorem~1.4]{CHR2010}. By Theorem~\ref{th:first}(a), $\lambda_M(f) \geq p/k-1 = \sigma_M(f)$.
Therefore $\sigma_M(f)=\lambda_M(f)=p/k-1>1$ for any non-trivial solution $f$ of~\eqref{eq:dek}.

Conversely,
assume $\sigma_M(f)=\lambda_M(f)=p/k-1>1$ for some non-trivial solution $f$ of~\eqref{eq:dek}.
By \cite[Theorem~1.4]{CHR2010}, we conclude $p_2=k(\sigma_M(f)+1) = p$. Suppose on the contrary to the assertion
that $p_1<p_2$. Then, Proposition~\ref{prop:extra-de} implies that all non-trivial solutions of~\eqref{eq:dek}
satisfy $\sigma_M(f) \leq \xi/k-1 < p/k-1 = \sigma_M(f)$, which is a~contradiction. This proves $p_2=p_1=p>2k$.

\section{Example} \label{sec:example}

The following example addresses the case when  the condition $\lambda_{M,\deg}(A)>2k$ is not satisfied. It seems that then  the correlations between 
the growth indicators of the coefficient 
and the growth indicators of solutions 
of~\eqref{eq:dek}
become even more complicated. The reasoning below illustrates this situation for nonvanishing solutions.

Let $\psi$ belong to the class ${\rm BV} [-\pi, \pi]$ of complex-valued 
functions of bounded variation, and let
$$ \omega(\delta, \psi)=\sup \big\{ |\psi(x)-\psi(y)|: \, |x-y|<\delta, \, \, x,y\in [-\pi,\pi]\big\}$$
be the modulus of continuity of $\psi$. 
For $\gamma\in (0,1]$, let $\Lambda_\gamma$ be the class of functions $\psi$ for which $\omega(\delta, \psi) \lesssim \delta^\gamma$ as 
$\delta\to 0^+$ \cite{Z}.

First, let  $0<\varkappa_1<\varkappa_2< 1$ and $\varkappa_1<\alpha$. By \cite[Theorem~6]{ChBer19} there exists an~analytic function $h_\alpha$ in $\D $ of the form
\begin{equation*}
h_\alpha(z)=\int_{-\pi}^{\pi} \frac{d\psi(t)}{(1-ze^{-it})^\alpha}, \quad z\in\D,
\end{equation*}
where $\psi$ is nondecreasing and satisfies
  $\omega(\delta_n, \psi)=O(\delta_n^{\varkappa_2})$ for some sequence $\{\delta_n\}$ tending to zero, 
while $\psi\in \Lambda_{\varkappa_1}$,   $\sigma_{M,\log}(h_\alpha)=\alpha-\varkappa_1$, and
\begin{equation} \label{e:t6}
 \lambda_{M,\log}(h_\alpha)=\begin{cases} 
 \frac{\alpha(\alpha-\varkappa_1)(1-\varkappa_2)}{\alpha(1-\varkappa_2)+\varkappa_2-\varkappa_1},& \alpha<1, \\
                                                                 \alpha-\varkappa_2   , &  \alpha> 1.
                                                                   \end{cases}
\end{equation}
It follows from the construction of $h_\alpha$ that, for $\alpha\in (0,1)$,
\begin{equation}\label{e:re_f_a_est}
{\rm Re} \, h_\alpha (r)\ge  \left( \frac{\alpha(\alpha-\varkappa_1)(1-\varkappa_2)}{\alpha(1-\varkappa_2)+\varkappa_2-\varkappa_1}+o(1)\right) \log \frac{1}{1-r}, \quad r\to 1^-.
\end{equation}
We define $f(z)=e^{h_\alpha(z)}$, $\alpha \in (0,1)$. 
Since
\begin{equation*}
{\rm Re }\, \frac{1}{(1-ze^{-it})^\alpha} \asymp \frac{1}{|1-ze^{-it}|^\alpha},
\end{equation*}
the conditions \eqref{e:t6} and \eqref{e:re_f_a_est} imply
\begin{equation*}
\sigma_M(f)=\alpha-\varkappa_1, \quad \lambda_M(f)= \frac{\alpha(\alpha-\varkappa_1)(1-\varkappa_2)}{\alpha(1-\varkappa_2)+\varkappa_2-\varkappa_1}.
\end{equation*}
Direct computation shows that $\lambda_M(f) \in (\alpha-\varkappa_2, \alpha-\varkappa_1)$.

Let $$A(z)=\frac{f'(z)}{f(z)}=h'_\alpha(z)= \int_{-\pi}^{\pi} \frac{\alpha e^{-it}\, d\psi(t)}{(1-ze^{-it})^{\alpha+1}}, \quad z\in\D
.$$
Note that, by the  construction $\psi\not\in \Lambda_\gamma$ for any  $\gamma >\varkappa_1$, also the measure $d\psi_1(t)=e^{-it} d\psi(t)$ belongs to the same Lipschitz class $\Lambda_
 {\varkappa_1}$. Thus,  by~\cite[Theorem 3]{Ch1},
$$M(r,A)=O\left( \Bl \frac{1}{1-r} \Br^{\alpha+1-\varkappa_1}\right), \quad r\to 1^-,$$
 and the exponent cannot be reduced.  Since $z A(z)=\alpha h_{\alpha+1}(z)-\alpha h_\alpha(z)$ for all $z\in\D$, we have
 \begin{equation}\label{e:mra_asymp}
  \alpha \, M(r, h_{\alpha+1})- \alpha \, M(r,h_\alpha) \leq   r\, M(r,A)  \leq \alpha \, M(r, h_{\alpha+1})+ \alpha \, M(r,h_\alpha)
 \end{equation}
for all $0\leq r<1$.
 Hence \begin{equation}\label{e:log_ord_sol}
         \lambda_{M,\deg}(A)=\lambda_{M,\deg}(h_{\alpha+1})=\alpha +1-\varkappa_2, \quad \sigma_{M,\deg}(A)=\alpha+1-\varkappa_1,
       \end{equation} 
 and $f$ is a solution of the equation $f'- Af=0$. In this case
\begin{equation*}
\lambda_M(f) \in \left( \frac{\lambda_{M,\deg}(A)}{1}-1, \, \frac{\sigma_{M,\deg}(A)}{1}-1 \right).
\end{equation*}

Now, let $0<\alpha<\varkappa_1<\varkappa_2<1$. Then (\cite[Theorem 6]{ChBer19} or \cite[Theorem 3]{Ch1}) 
$M(r, h_\alpha)=O(1)$, so $\sigma_M(f)=\lambda_M(f)=0$. On the other hand, \eqref{e:mra_asymp} still holds. Therefore,
\eqref{e:log_ord_sol} is valid. In this case the coefficient is of irregular growth, while all solutions are of regular growth.

\section*{Acknowledgement}

The authors thank Professor Yurii Lyubarskii for the idea to use approximation of subharmonic functions  
to construct the analytic function A in Theorem~\ref{e:sol_limits2}.

\end{document}